\newtheorem{theorem}{Theorem}[section]
\newtheorem{proposition}[theorem]{Proposition}
\newtheorem{corollary}[theorem]{Corollary}
\newtheorem{lemma}[theorem]{Lemma}
\theoremstyle{definition}
\newtheorem{remark}{Remark}
\providecommand{\customgenericname}{}
\newcommand{\newcustomtheorem}[2]{%
  \newenvironment{#1}[1]
  {%
   \renewcommand\customgenericname{#2}%
   \renewcommand\theinnercustomgeneric{##1}%
   \innercustomgeneric
  }
  {\endinnercustomgeneric}
}
\newcommand{\newcustomlemma}[2]{%
  \newenvironment{#1}[1]
  {%
   \renewcommand\customgenericname{#2}%
   \renewcommand\theinnercustomgeneric{##1}%
   \innercustomgeneric
  }
  {\endinnercustomgeneric}
}
\newcommand\relphantom[1]{\mathrel{\phantom{#1}}}
\numberwithin{equation}{section}
\def\MM{{\boldsymbol{M}}}
\def\a{{\alpha}}
\def\aa{{\boldsymbol{\alpha}}}
\def\xxi{\vec{\boldsymbol{\xi}}}
\def\|{{\boldsymbol{|}}}
\def\ff{{\boldsymbol{f}}}
\def\fff{\vec{\boldsymbol{f}}}
\def\vv{\vec{\boldsymbol{v}}}
\def\ll{\vec{\boldsymbol{l}}}
\def\eeta{\vec{\boldsymbol{\eta}}}
\begin{document}

\begin{thanks}
{The author is supported in part by NRF grant 2019R1F1A1044075}
\end{thanks}

\address{School of Mathematics \\
           Korea Institute for Advanced Study, Seoul\\
           Republic of Korea}
   \email{qkrqowns@kias.re.kr}

\author{Bae Jun Park}

\title[Norm equivalence and Multilinear operators]{Equivalence of (quasi-)norms on a vector-valued function space and its applications to Multilinear operators}
\keywords{}

\begin{abstract} 
In this paper we present (quasi-)norm equivalence on a vector-valued function space $L^p_A(l^q)$ and extend the equivalence to $p=\infty$ and $0<q<\infty$ in the scale of Triebel-Lizorkin space, motivated by Frazier-Jawerth \cite{Fr_Ja2}.
By applying the results, we improve the multilinear H\"ormander's multiplier theorem of Tomita \cite{Tom}, that of Grafakos-Si \cite{Gr_Si}, and the boundedness results for bilinear pseudo-differential operators, given by Koezuka-Tomita \cite{Ko_To}.
\end{abstract}

\maketitle

\section{{Introduction}}\label{introduction}

Let $T$ be multilinear operator, defined on $n$-fold products of $S(\mathbb{R}^d)$, taking values in the space of tempered distributions.
One of main problems in multilinear operator theory is $L^{p_1}\times\cdots\times L^{p_n}\to L^r$ boundedness estimates for $T$ when $1/r=1/p_1+\dots+1/p_n$, and this problem has been actively studied until recently.
For example, the multilinear Calder\'on-Zygmund theory has been  developed by Grafakos-Torres \cite{Gr_To} while particular examples in the theory have been already studied by Coifman and Meyer \cite{Co_Me1, Co_Me2, Co_Me3, Co_Me4}. 
The boundedness of bilinear Hilbert transform was obtained by Lacey and Thiele  \cite{La_Th1, La_Th2}, and
the multilinear versions of H\"ormander multiplier theorem are investigated by Tomita \cite{Tom}, Grafakos-Si \cite{Gr_Si}, Grafakos-Miyach-Tomita \cite{Gr_Mi_Tom}, Miyachi-Tomita \cite{Mi_Tom}, Grafakos-Nguyen \cite{Gr_Ng}, and Grafakos-Miyachi-Nguyen-Tomita \cite{Gr_Mi_Ng_Tom}. 
The boundedness of multilinear pseudo-differential operators was investigated by B\'enyi-Torres \cite{Be_To}, Miyachi-Tomita \cite{Mi_Tom2}, Rodr\'guez-L\'opez-Staubach \cite{Ro_St}, Michalowski-Rule-Staubach \cite{Mic_Ru_St}, Naibo \cite{Na}, and Koezuka-Tomita \cite{Ko_To}.

H\"older's inequality $L^{p_1}\cdots L^{p_n}\subset L^r$, $1/r=1/p_1+\dots+1/p_n$, is primarily required to handle such multilinear operators, but the inequality seems to be insufficient to derive $BMO$ bound when $p_j=\infty$. In turn, 
the above results mostly treat finite $p_j$'s, and occasionally extend to $L^{\infty}$ rather than $BMO$ when $p_j=\infty$.

 The aim of this paper is twofold.  The first one is to introduce (quasi-)norm equivalence on a vector-valued function space, from which $\Vert f\Vert_{BMO}$ can be expressed as $L^{\infty}(l^2)$ norm of a variant of $f$. The equivalence will enable us to still utilize H\"older's inequality to obtain some boundedness results involving $BMO$-type function spaces. The second one is to study how the equivalence can be applied to generalize previous boundedness results for multilinear operators to $BMO$-type function spaces.
 We will actually extend and improve the multilinear version of H\"ormander's multiplier theorems of Tomita \cite{Tom} and  Grafakos-Si \cite{Gr_Si}, and the boundedness result of multilinear pseudo-differential operators of Koezuka-Tomita \cite{Ko_To}.

\subsection{Equivalence of (quasi-)norms on a vector-valued function space }

 For $r>0$ let $\mathcal{E}(r)$ denote the space of all distributions whose Fourier transforms are supported in $\big\{\xi\in\mathbb{R}^d:|\xi|\leq 2r\big\}$.
Let $A>0$. For $0<p<\infty$ and $0<q\leq \infty$ or for $p=q=\infty$ we define
 \begin{equation*}
L_A^p(l^q):=\big\{\{f_k\}_{k\in\mathbb{Z}}\subset S':f_k\in\mathcal{E}(A2^k), \big\Vert \{ f_k\}_{k\in\mathbb{Z}}\big\Vert_{L^p(l^q)}<\infty \big\}.
\end{equation*}
 Then it is known in \cite{Tr} that $L_A^p(l^q)$ is a quasi-Banach space (Banach space if $p,q\geq 1$) with a (quasi-)norm $\Vert \cdot\Vert_{L^p(l^q)}$. 
 We will study some (quasi-)norm equivalence on $L_A^p(l^q)$  and one of main results is an extension of the norm equivalence to the case $p=\infty$ and $0<q<\infty$ in the scale of Triebel-Lizorkin space.
 
Let $\mathcal{D}$ denote the set of all dyadic cubes in $\mathbb{R}^d$, and for each $k\in\mathbb{Z}$ let  $\mathcal{D}_{k}$ be the subset of  $\mathcal{D}$ consisting of the cubes with side length $2^{-k}$.
For $k\in \mathbb{Z}$, $\sigma>0$, and $0<t\leq \infty$ let
\begin{equation}\label{generalpeetre}
\mathfrak{M}_{\sigma,2^k}^{t}f(x):=2^{kd/t}\Big\Vert \frac{f_k(x-\cdot)}{(1+2^k|\cdot|)^{\sigma}}\Big\Vert_{L^t},
\end{equation}
which is a generalization of the Peetre's maximal function $\mathfrak{M}_{\sigma,2^k}f(x):=\mathfrak{M}_{\sigma,2^k}^{\infty}f(x)$. 
We refer to Section \ref{preliminary} for properties of the operator $\mathfrak{M}_{\sigma,2^k}^t$.

\begin{theorem}\label{mainequiv}
Let $0<q<\infty$, $\sigma>d/t>d/q$, $0<\gamma<1$, and $\mu\in\mathbb{Z}$. Suppose $A>0$ and $f_k\in \mathcal{E}(A2^k)$ for each $k\in\mathbb{Z}$. For $Q\in\mathcal{D}$ there exists a proper measurable subset $S_Q$ of $Q$, depending on $\gamma,q,\sigma,t,\{f_k\}_{k\in\mathbb{Z}}$, such that $|S_Q|>(1-\gamma)|Q|$ and
\begin{align}\label{equivkeyclaim}
&\sup_{P\in \mathcal{D},l(P)\leq 2^{-\mu}}{\Big(\frac{1}{|P|}\int_P{\sum_{k=-\log_2{l(P)}}^{\infty}{|f_k(x)|^q}}dx \Big)^{1/q}} \nonumber
\\
&\relphantom{=} \approx \Big\Vert \Big\{  \sum_{Q\in\mathcal{D}_k}{\Big( \inf_{y\in Q}{\mathfrak{M}_{\sigma,2^k}^tf_k(y)}\Big)\chi_{S_Q}}   \Big\}_{k\geq \mu} \Big\Vert_{L^{\infty}(l^q)}, \quad \text{uniformly in }~\mu.
\end{align}
\end{theorem}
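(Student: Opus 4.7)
The plan is to establish three basic pointwise properties of $\mathfrak{M}^{t}_{\sigma,2^k}$ and then prove the two directions of \eqref{equivkeyclaim} separately. First, for $f_k\in\mathcal{E}(A2^k)$ the reproducing identity $f_k=f_k*\phi_k$ with $\widehat{\phi}\equiv 1$ on $\{|\xi|\le A\}$, combined with H\"older's inequality, yields the pointwise bound $|f_k(x)|\le C\,\mathfrak{M}^{t}_{\sigma,2^k}f_k(x)$ when $\sigma>d/t$. Second, a translation in the defining integral of $\mathfrak{M}^{t}_{\sigma,2^k}$ shows that $\mathfrak{M}^{t}_{\sigma,2^k}f_k(y_1)\approx \mathfrak{M}^{t}_{\sigma,2^k}f_k(y_2)$ whenever $2^k|y_1-y_2|$ is bounded, so that $\sup_{y\in Q}\mathfrak{M}^{t}_{\sigma,2^k}f_k(y)\approx \inf_{y\in Q}\mathfrak{M}^{t}_{\sigma,2^k}f_k(y)$ for every $Q\in\mathcal{D}_k$. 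Third, $\mathfrak{M}^{t}_{\sigma,2^k}f_k(x)\le C[M(|f_k|^t)(x)]^{1/t}$ follows directly from the definition, where $M$ is the Hardy--Littlewood maximal operator.

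For the direction ``LHS $\lesssim$ RHS'', which uses only the lower bound $|S_Q|>(1-\gamma)|Q|$, I would fix $P\in\mathcal{D}$ with $l(P)\le 2^{-\mu}$ and decompose $P$ into its dyadic subcubes of scale $2^{-k}$ for each $k\ge k_P:=-\log_2 l(P)$. Combining the first two preliminaries, for each $Q\in\mathcal{D}_k$ with $Q\subset P$,
$$\int_Q|f_k|^q\,dx \le C\int_Q\mathfrak{M}^{t}_{\sigma,2^k}f_k(x)^q\,dx \le C|Q|\Big(\inf_{y\in Q}\mathfrak{M}^{t}_{\sigma,2^k}f_k(y)\Big)^q \le \frac{C}{1-\gamma}\int_{S_Q}\Big(\inf_{y\in Q}\mathfrak{M}^{t}_{\sigma,2^k}f_k(y)\Big)^q dx.$$
Summing over $Q\subset P$ and $k\ge k_P\ge\mu$, and noting that the $\chi_{S_Q}$'s are pairwise disjoint across $Q\in\mathcal{D}_k$ so that only the unique $Q_k(x)\ni x$ contributes at scale $k$, gives
$$\frac{1}{|P|}\int_P\sum_{k\ge k_P}|f_k|^q\,dx \le \frac{C'}{|P|}\int_P\sum_{k\ge\mu}\Big(\inf_{y\in Q_k(x)}\mathfrak{M}^{t}_{\sigma,2^k}f_k(y)\Big)^q\chi_{S_{Q_k(x)}}(x)\,dx,$$
which is at most $C'$ times the $q$-th power of the right-hand side of \eqref{equivkeyclaim}.

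For the reverse direction ``RHS $\lesssim$ LHS'', the sets $S_Q$ must be constructed carefully via a stopping-time argument in the spirit of Frazier--Jawerth \cite{Fr_Ja2}. For each $Q\in\mathcal{D}_k$, I would let $\lambda_Q$ be the smallest level for which $|\{x\in Q:\mathcal{N}_Q(x)>\lambda_Q\}|\le \gamma|Q|$, where $\mathcal{N}_Q$ is an auxiliary nonlinear functional built from local Hardy--Littlewood averages of $\{|f_j|^t\}_{j\ge k}$, and set $S_Q:=\{x\in Q:\mathcal{N}_Q(x)\le \lambda_Q\}$; this gives $|S_Q|\ge (1-\gamma)|Q|$ automatically. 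The functional $\mathcal{N}_Q$ is chosen so that on $S_Q$ the quantity $\inf_{y\in Q}\mathfrak{M}^{t}_{\sigma,2^k}f_k(y)$ is controlled by a suitable local average of $\sum_{j\ge k_{P^*}}|f_j|^q$ over a slight enlargement $P^*$ of $Q$. Combining this with the third preliminary, the vector-valued Fefferman--Stein maximal inequality on $l^{q/t}$ (applicable since $q/t>1$), and a geometric summation in $k$ exploiting the decay $(1+2^k|\cdot|)^{-\sigma}$ of the Peetre kernel, one bounds the $q$-th power of the right-hand side of \eqref{equivkeyclaim} by a constant times $\sup_{P\ni x,\,l(P)\le 2^{-\mu}}\frac{1}{|P|}\int_P\sum_{k\ge k_P}|f_k|^q$. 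Uniformity in $\mu$ is immediate since all constants come from the preliminaries and the Fefferman--Stein inequality, and are independent of $\mu$.

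The main obstacle is the design of $\mathcal{N}_Q$: it must simultaneously ensure the measure bound $|S_Q|>(1-\gamma)|Q|$ and provide the pointwise control on $S_Q$ stated above. Handling the off-diagonal contributions from scales $j\ne k$ to $\mathfrak{M}^{t}_{\sigma,2^k}f_k$ requires careful bookkeeping using the decay of the Peetre kernel; this is the technical heart of the argument.
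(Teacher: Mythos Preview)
Your argument for LHS $\lesssim$ RHS is correct and matches the paper. The gap lies entirely in the reverse inequality: you correctly anticipate that $S_Q$ should be a sublevel set of some functional $\mathcal{N}_Q$, but you leave $\mathcal{N}_Q$ unspecified and the tools you suggest would not close the argument. Two concrete problems: there are no ``off-diagonal contributions from scales $j\ne k$'' to $\mathfrak{M}^{t}_{\sigma,2^k}f_k$, since this operator acts on $f_k$ alone, so no multi-scale bookkeeping of that kind is needed; and the Fefferman--Stein inequality is an $L^p(l^q)$ bound for finite $p$, so it cannot directly control the $L^{\infty}(l^q)$ norm appearing on the right of \eqref{equivkeyclaim}.

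The paper's construction is cleaner than you anticipate. One takes $\mathcal{N}_Q$ to be the full localized $l^q$-square function
\[
G_Q^q\big(\mathfrak{M}^t_\sigma(\mathbf{f})\big)(x)=\Big\Vert\Big\{\sum_{Q'\in\mathcal{D}_k,\,Q'\subset Q}\Big(\inf_{Q'}\mathfrak{M}^{t}_{\sigma,2^k}f_k\Big)\chi_{Q'}(x)\Big\}_{k\ge-\log_2 l(Q)}\Big\Vert_{l^q},
\]
and sets $S_Q=\{x\in Q:G_Q^q(\mathfrak{M}^t_\sigma(\mathbf{f}))(x)\le\mathbf{m}^{\gamma,q,-\log_2 l(Q)}(x)\}$, where $\mathbf{m}^{\gamma,q,\nu}(x)=\sup_{R\ni x,\,l(R)\le 2^{-\nu}}m_R^\gamma\big(G_R^q(\mathfrak{M}^t_\sigma(\mathbf{f}))\big)$ and $m_R^\gamma$ is the $\gamma$-median over $R$; the defining property of the median gives $|S_Q|\ge(1-\gamma)|Q|$ at once. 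The $L^\infty(l^q)$ bound then comes from a \emph{maximality} argument rather than a maximal inequality: at each $x$ with nonzero contribution, pick the largest dyadic $Q_0\ni x$ with $x\in S_{Q_0}$; the $l^q$-sum over the $\chi_{S_Q}$'s then collapses to $G_{Q_0}^q(\mathfrak{M}^t_\sigma(\mathbf{f}))(x)$, which by definition of $S_{Q_0}$ is at most $\mathbf{m}^{\gamma,q,\mu}(x)$. Finally, $\|\mathbf{m}^{\gamma,q,\mu}\|_{L^\infty}\lesssim$ LHS follows from Chebyshev's inequality combined with the $p=\infty$ maximal estimate of Lemma~\ref{maximal2}(2), which is the correct substitute for Fefferman--Stein in this endpoint setting. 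No geometric summation in $k$ or Peetre-kernel decay beyond what is already encoded in Lemma~\ref{maximal2} is required.
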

We note that the constant in (\ref{equivkeyclaim}) is independent of $\{f_k\}_{k\in\mathbb{Z}}$, just depending on $\gamma$. 
The equivalence in Theorem \ref{mainequiv} can be compared with the estimate in Lemma \ref{character2} that for $0<p<\infty$ or $p=q=\infty$ 
 \begin{equation}\label{briefcharacter}
\big\Vert \{f_k\}_{k\in\mathbb{Z}}\big\Vert_{L^p(l^q)}\approx \Big\Vert \Big\{ \sum_{Q\in\mathcal{D}_k}{\Big(\inf_{y\in Q}{ \mathfrak{M}_{\sigma,2^k}^tf_k(y)}\Big)\chi_{S_Q }}\Big\}_{k\in\mathbb{Z}}\Big\Vert_{L^p(l^q)}, \quad \{f_k\}_{k\in\mathbb{Z}}\in L_A^p(l^q)
\end{equation}
if  $\sigma>d/t>d/\min{(p,q)}$.
Note that for $1<p<\infty$, according to Littlewood-Paley theory,
\begin{equation*}
\Vert f\Vert_{L^p}\approx \big\Vert \big\{ \phi_k\ast f\big\}_{k\in\mathbb{Z}}\big\Vert_{L^p(l^2)}
\end{equation*} and, using (\ref{briefcharacter}),
this is also comparable to
\begin{equation}\label{lpcharacter}
\Big\Vert \Big\{ \sum_{Q\in\mathcal{D}_k}{\Big(\inf_{y\in Q}{ \mathfrak{M}_{\sigma,2^k}^t\big(\phi_k\ast f\big)(y)}\Big)\chi_{S_Q}}\Big\}_{k\in\mathbb{Z}}\Big\Vert_{L^p(l^2)}
\end{equation}
where $\{\phi_k\}_{k\in\mathbb{Z}}$ is a homogeneous Littlewood-Paley partition of unity, defined in Section \ref{preliminary}.
On the other hand, using a deep connection between $BMO$ and Carleson measure,
\begin{equation*}
\Vert f\Vert_{BMO}\approx \sup_{P\in\mathcal{D}}{\Big( \frac{1}{|P|}\int_P{\sum_{k=-\log_2{l(P)}}^{\infty}{\big|\phi_k\ast f(x)\big|^2}}dx\Big)^{1/2}}.
\end{equation*}
The main value of Theorem \ref{mainequiv} is that $\Vert \cdot \Vert_{BMO}$ can be expressed in the form $\Vert \cdot \Vert_{L^{\infty}(l^2)}$ as an extension of (\ref{lpcharacter}) to $p=\infty$.
\begin{corollary}\label{maincor}
Let  $\sigma>d/t>d/2$ and $0<\gamma<1$.  For $Q\in\mathcal{D}$ there exists a proper measurable subset $S_Q$ of $Q$, depending on $\gamma,\sigma,t,f$, such that
$|S_Q|>(1-\gamma) |Q|$ and
\begin{equation*}
\Vert f\Vert_{BMO}\approx \Big\Vert \Big\{ \sum_{Q\in\mathcal{D}_k}{\Big( \inf_{y\in Q}{\mathfrak{M}_{\sigma,2^k}^t\big(\phi_k\ast f\big)(y) }\Big)\chi_{S_Q}}\Big\}_{k\in\mathbb{Z}}\Big\Vert_{L^{\infty}(l^2)}.
\end{equation*}
\end{corollary}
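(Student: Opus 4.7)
The plan is to apply Theorem \ref{mainequiv} with $q=2$ to the sequence $f_k := \phi_k \ast f$, and then combine the resulting equivalence with the classical characterization of $BMO$ as a Carleson-type supremum recalled in the paragraph preceding the corollary. The condition $\sigma > d/t > d/2$ required by the corollary is precisely the condition $\sigma > d/t > d/q$ of the theorem when $q=2$.

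First I would verify the hypothesis $\phi_k \ast f \in \mathcal{E}(A 2^k)$: since the homogeneous Littlewood-Paley partition $\{\phi_k\}$ has each $\widehat{\phi_k}$ supported in an annulus of the form $\{|\xi| \sim 2^k\}$, the convolution $\phi_k \ast f$ has Fourier transform supported in $\{|\xi| \leq A 2^k\}$ for some fixed $A$ depending only on the choice of $\phi$. Hence Theorem \ref{mainequiv} applies and produces, for the given $\gamma$, a family of measurable sets $\{S_Q\}_{Q \in \mathcal{D}}$ (depending on $\gamma, \sigma, t$ and $f$ but not on $\mu$) with $|S_Q| > (1-\gamma)|Q|$, together with the equivalence
\begin{equation*}
\sup_{P \in \mathcal{D},\, l(P) \leq 2^{-\mu}} \Big(\frac{1}{|P|}\int_P \sum_{k = -\log_2 l(P)}^\infty |\phi_k \ast f(x)|^2 \, dx\Big)^{1/2} \approx \Big\Vert \Big\{\sum_{Q \in \mathcal{D}_k} \big(\inf_{y \in Q} \mathfrak{M}_{\sigma,2^k}^t(\phi_k \ast f)(y)\big) \chi_{S_Q} \Big\}_{k \geq \mu} \Big\Vert_{L^\infty(l^2)}
\end{equation*}
with implicit constants independent of $\mu$.

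Finally, I would let $\mu \to -\infty$. As $\mu$ decreases, the set of admissible cubes $\{P \in \mathcal{D} : l(P) \leq 2^{-\mu}\}$ exhausts all of $\mathcal{D}$, so the left-hand side increases monotonically to
\begin{equation*}
\sup_{P \in \mathcal{D}} \Big(\frac{1}{|P|}\int_P \sum_{k \geq -\log_2 l(P)} |\phi_k \ast f(x)|^2\, dx\Big)^{1/2},
\end{equation*}
which by the Carleson-measure characterization of $BMO$ recalled above is equivalent to $\Vert f\Vert_{BMO}$. On the right-hand side, because the sets $S_Q$ are chosen independently of $\mu$, the inner $l^2$ norm taken over $k \geq \mu$ increases monotonically (by the monotone convergence theorem applied pointwise in $x$) to the full $l^2$ norm over $k \in \mathbb{Z}$, and consequently the $L^\infty$ norm increases to the quantity displayed in the corollary. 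Uniformity of the constants in Theorem \ref{mainequiv} with respect to $\mu$ then transfers the equivalence to the limit.

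Since the substantive analytic work is already contained in Theorem \ref{mainequiv}, there is no real obstacle at this stage; the only care needed is to check that the constants in the theorem are uniform in $\mu$ and that the sets $S_Q$ can be chosen independently of $\mu$, so that both sides admit a clean monotone passage to the limit. Both features are already embedded in the statement of Theorem \ref{mainequiv}, so the corollary is a direct consequence.
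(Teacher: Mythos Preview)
Your proposal is correct and matches the paper's intended argument: the corollary is not given a separate proof in the paper precisely because it follows directly from Theorem~\ref{mainequiv} with $q=2$, $f_k=\phi_k\ast f$, combined with the Carleson-measure characterization of $BMO$ recalled just before the statement. Your observations that the sets $S_Q$ produced in the proof of Theorem~\ref{mainequiv} (namely $S_Q^{\gamma,2}(\mathfrak{M}_\sigma^t(\mathbf{f}))$, see Proposition~\ref{character4}) do not depend on $\mu$, and that both sides then increase monotonically as $\mu\to-\infty$, are exactly what is needed to pass to the full index set $k\in\mathbb{Z}$.
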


A simple application of Corollary \ref{maincor} is the inequality
\begin{equation}\label{bmoh1}
\big|\langle f,g\rangle \big|\lesssim \Vert f\Vert_{BMO}\Vert g\Vert_{H^1}.
\end{equation}
This provides one direction of the duality between $H^1$ and $BMO$, which was first announced in \cite{Fe} and proved in \cite{Car, Fe_St}.
It can be also proved in a different way, using Corollary \ref{maincor} and H\"older's inequality. The proof will be given in Appendix \ref{appendixa}.

\subsection{H\"ormander multiplier theorem for multilinear operators}

For simplicity we use the notation $\xxi:=(\xi_1,\dots,\xi_n)$.
For $m\in L^{\infty}\big( (\mathbb{R}^d)^n\big)$ the $n$-linear multiplier operator $T_{m}$ is defined by
\begin{equation*}
T_{m}\big(f_1,\dots,f_n \big)(x):={\int_{(\mathbb{R}^d)^n}{m(\xxi)\Big(\prod_{j=1}^{n}\widehat{f_j}(\xi_j)\Big)e^{2\pi i\langle x,\sum_{j=1}^{n}{\xi_j} \rangle}}d\xxi}
\end{equation*} for $f_j\in S(\mathbb{R}^d)$.
 Let $\vartheta^{(n)}\in S((\mathbb{R}^d)^n)$ have the properties that $0\leq \vartheta^{(n)}\leq 1$, $\vartheta^{(n)}=1$ for $2^{-1}\leq |\xxi| \leq 2$, and $Supp(\vartheta^{(n)})\subset \big\{\xxi\in (\mathbb{R}^d)^n: 2^{-2}\leq |\xxi|\leq 2^2 \big\}$.
 Define
\begin{equation*}
\mathcal{L}_s^{r,\vartheta^{(n)}}[m]:=\sup_{l\in\mathbb{Z} }{\big\Vert m(2^l\cdot_{1},\dots,2^l\cdot_{n})\vartheta^{(n)}\big\Vert_{L_s^r((\mathbb{R}^d)^n)}}.
\end{equation*}
We recall the multilinear multiplier theorem of Tomita \cite{Tom}.
\begin{customthm}{A}\label{previousmulti0}
Suppose  $1<p,p_1,\dots,p_n<\infty$ and $1/p=1/p_1+\cdots +1/p_n$. 
If $m\in L^{\infty}((\mathbb{R}^d)^n )$ satisfies $\mathcal{L}_s^{2,\vartheta^{(n)}}[m]<\infty$ for  $s>nd/2$, then there exists a constant $C>0$ so that
\begin{equation*}
\big\Vert T_{m}\big( f_1,\dots,f_n\big)\big\Vert_{L^p}\leq C\mathcal{L}_s^{2,\vartheta^{(n)}}[m] \prod_{j=1}^{n}\Vert f_j\Vert_{L^{p_j}}.
\end{equation*}
\end{customthm}

Another boundedness result was obtained by Grafakos-Si \cite{Gr_Si}
\begin{customthm}{B}\label{previousmulti}
Let $0<p<\infty$ and  $1/p=1/p_1+\cdots+1/p_n$.
Suppose $1<r\leq 2$ and $m$ satisfies $\mathcal{L}_s^{r,\vartheta^{(n)}}[m]<\infty$ for $s>nd/r$.
Then there exists a number $\delta>0$, satisfying $0<\delta\leq r-1$, such that 
\begin{equation*}
\big\Vert T_{m}\big( f_1,\dots,f_n\big)\big\Vert_{L^p}\lesssim \mathcal{L}_s^{r,\vartheta^{(n)}}[m] \prod_{j=1}^{n}\Vert f_j\Vert_{L^{p_j}}.
\end{equation*} whenever $r-\delta<p_j<\infty$ for $1\leq j\leq n$.

\end{customthm}
Note that Theorem \ref{previousmulti} takes into account a broader range of $p$ by giving stronger assumptions on $s$, while, under the same assumption $s>nd/2$ (when $r=2$), the estimate in Theorem \ref{previousmulti} is a partial result of Theorem \ref{previousmulti0}. We also refer to \cite{Fu_Tom1, Fu_Tom2, Gr_Mi_Tom, Gr_Mi_Ng_Tom, Mi_Tom} for further results.

We will generalize Theorem \ref{previousmulti0} and \ref{previousmulti}.
Let 
\begin{equation*}
X^p:=\Big\{\begin{array}{ll}
H^p\quad & \text{if}\quad p<\infty\\
BMO \quad  & \text{if}\quad p=\infty 
\end{array}.
\end{equation*}

\begin{theorem}\label{multipliermain0}
Let $1<p<\infty$ and $1<p_{i,j}\leq \infty$, $1\leq i,j\leq n$, satisfy
\begin{equation}\label{conditionp}
\frac{1}{p}=\frac{1}{p_{i,1}}+\dots+\frac{1}{p_{i,n}} \quad \text{for  }~ 1\leq i\leq n. 
\end{equation}
Suppose  $m$ satisfies
$\mathcal{L}_s^{2,\vartheta^{(n)}}[m]<\infty$ for $s>nd/2$.
Then
\begin{equation*}
\big\Vert T_m\big( f_1,\dots,f_n\big)\big\Vert_{L^p}\lesssim \mathcal{L}_s^{2,\vartheta^{(n)}}[m]\sum_{i=1}^{n}{\Big(\Vert f_i\Vert_{X^{p_{i,i}}} \prod_{1\leq j\leq n,j\not= i}{\Vert f_j\Vert_{L^{p_{i,j}}}}\Big)}.
\end{equation*}
\end{theorem}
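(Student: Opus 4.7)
The plan is to combine the Fourier series method of Tomita with the new equivalence on $L^p_A(l^2)$ afforded by Theorem \ref{mainequiv} and Corollary \ref{maincor}. I would first decompose $m$ using a Littlewood-Paley partition $\{\Psi_k\}_{k\in\mathbb{Z}}$ on $(\mathbb{R}^d)^n$, writing $m_k:=m\Psi_k$, and expand each $m_k$ as a Fourier series on a periodicized cube containing its support, obtaining
\begin{equation*}
T_{m_k}(\fff)(x)=\sum_{\vec{l}\in\mathbb{Z}^{nd}}c_{\vec{l},k}(m)\prod_{j=1}^{n}\big(\Phi_{k,j,l_j}\ast f_j\big)(x),
\end{equation*}
where $\Phi_{k,j,l_j}$ is a band-limited bump at frequency scale $2^k$ translated by $2^{-k}\kappa l_j$ for a suitable integer $\kappa$, and the coefficients satisfy $\sum_{\vec{l}}(1+|\vec{l}|)^{2s}|c_{\vec{l},k}(m)|^2\lesssim(\mathcal{L}_s^{2,\vartheta^{(n)}}[m])^2$ by Plancherel. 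Since each $T_{m_k}(\fff)$ has Fourier support in a ball of radius $\lesssim 2^k$, Littlewood-Paley theory for $1<p<\infty$ reduces the task to controlling $\Vert\{T_{m_k}(\fff)\}_k\Vert_{L^p(l^2)}$.

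Next, I would apply the Peetre maximal bound $|\Phi_{k,j,l_j}\ast f_j(x)|\lesssim(1+|l_j|)^{\sigma}\mathfrak{M}_{\sigma,2^k}^{t}(\widetilde{\phi}_k\ast f_j)(x)$ for an auxiliary Littlewood-Paley piece $\widetilde{\phi}_k$, with $\sigma>d/t>d/2$. Summing over $\vec{l}$ via Cauchy-Schwarz against the Sobolev bound on $c_{\vec{l},k}$ (the assumption $s>nd/2$ being precisely tailored for summability of $\sum(1+|\vec{l}|)^{-2s}\prod(1+|l_j|)^{2\sigma}$ via the coordinate-wise factorization inequality) reduces matters to bounding $\Vert\{\prod_{j}\mathfrak{M}_{\sigma,2^k}^{t}(\widetilde{\phi}_k\ast f_j)\}_k\Vert_{L^p(l^2)}$ by $\Vert f_i\Vert_{X^{p_{i,i}}}\prod_{j\neq i}\Vert f_j\Vert_{L^{p_{i,j}}}$ for each $i\in\{1,\dots,n\}$.

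To produce such a bound, I fix $i$ and a small $\gamma>0$, apply Theorem \ref{mainequiv} to each $\{\widetilde{\phi}_k\ast f_j\}_k$ with parameter $\gamma/n$, and intersect the resulting sets $S_Q^{(j)}$ to obtain a common $S_Q:=\bigcap_{j=1}^{n}S_Q^{(j)}$ with $|S_Q|>(1-\gamma)|Q|$. On this common set, the Peetre maximal function can be compared with its infimum on $Q$, namely $\alpha_{Q,j}^{(k)}:=\inf_{y\in Q}\mathfrak{M}_{\sigma,2^k}^{t}(\widetilde{\phi}_k\ast f_j)(y)$. Applying the pointwise inequality $\prod_{j}b_j^2\leq b_i^2\prod_{j\neq i}(\sup_k b_j)^2$ and then H\"older's inequality in $L^p$ with the decomposition $1/p=\sum_j 1/p_{i,j}$ delivers an estimate by $\big(\prod_{j\neq i}\Vert A_j\Vert_{L^{p_{i,j}}}\big)\cdot\Vert G_i\Vert_{L^{p_{i,i}}}$, where $A_j(x):=\sup_k\mathfrak{M}_{\sigma,2^k}^{t}(\widetilde{\phi}_k\ast f_j)(x)$ and $G_i(x):=\big(\sum_k\big[\sum_{Q\in\mathcal{D}_k}\alpha_{Q,i}^{(k)}\chi_{S_Q}(x)\big]^2\big)^{1/2}$. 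The Fefferman-Stein maximal inequality gives $\Vert A_j\Vert_{L^{p_{i,j}}}\lesssim\Vert f_j\Vert_{L^{p_{i,j}}}$ for $j\neq i$, while Lemma \ref{character2} (for $p_{i,i}<\infty$) and Corollary \ref{maincor} (for $p_{i,i}=\infty$) deliver $\Vert G_i\Vert_{L^{p_{i,i}}}\lesssim\Vert f_i\Vert_{X^{p_{i,i}}}$, completing the bound for each $i$.

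The main obstacle is the case $p_{i,i}=\infty$. A naive pull-out of an $L^{\infty}(l^2)$ norm of Peetre maximal functions would yield the Triebel-Lizorkin $F^{0}_{\infty,2}$ norm, which is strictly stronger than $\Vert\cdot\Vert_{BMO}$, so Tomita's argument cannot be literally transplanted. The decisive role of Corollary \ref{maincor} is that restricting each Peetre maximal function to the carefully chosen subset $S_Q\subset Q$ and replacing it by its infimum on $Q$ produces a modified $L^{\infty}(l^2)$ quantity that is genuinely equivalent to $\Vert\cdot\Vert_{BMO}$. The secondary technical point is the treatment of the exceptional sets $Q\setminus S_Q$ that arise when one compares $\mathfrak{M}_{\sigma,2^k}^{t}(\widetilde{\phi}_k\ast f_j)$ with $\alpha_{Q,j}^{(k)}$ on the full cube rather than on $S_Q$; here one exploits the freedom to take $\gamma$ arbitrarily small and absorbs the bad-set contribution into the final estimate.
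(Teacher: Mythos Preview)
Your proposal has a structural gap at the very start: after writing $m_k=m\Psi_k$ with $\Psi_k$ supported on $\{|\xxi|\sim 2^k\}$ and expanding in a Fourier series, the factors $\Phi_{k,j,l_j}$ necessarily have Fourier support in a \emph{ball} $\{|\xi_j|\lesssim 2^k\}$, not an annulus. Consequently the only legitimate choice of ``auxiliary Littlewood--Paley piece'' making the Peetre bound $|\Phi_{k,j,l_j}\ast f_j|\lesssim (1+|l_j|)^{\sigma}\mathfrak{M}_{\sigma,2^k}^{t}(\widetilde{\phi}_k\ast f_j)$ valid is a ball-type bump $\widetilde{\phi}_k=\Phi_k$. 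But then $G_i$ is built from $\{\Phi_k\ast f_i\}_k$, and neither Lemma \ref{character2} nor Corollary \ref{maincor} applies: the square function $(\sum_k|\Phi_k\ast f_i|^2)^{1/2}$ is infinite already for $f_i\equiv 1$, so the asserted bound $\Vert G_i\Vert_{L^{p_{i,i}}}\lesssim\Vert f_i\Vert_{X^{p_{i,i}}}$ is false. If instead you take $\widetilde{\phi}_k$ annular, the Peetre bound fails for all $j$ with $|\xi_j|\ll 2^k$. The same ball-support issue also invalidates your Littlewood--Paley reduction: since $T_{m_k}(\fff)$ has Fourier support only in a ball of radius $\lesssim 2^k$, the inequality $\Vert\sum_k g_k\Vert_{L^p}\lesssim\Vert\{g_k\}_k\Vert_{L^p(l^2)}$ is not available.

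The paper repairs both problems by first decomposing $m=\sum_{i=1}^n m^{(i)}$ according to which variable $|\xi_i|$ is dominant, so that for $m^{(1)}$ the piece $(f_1)_k$ is genuinely annular while $(f_j)^k=\Phi_k\ast f_j$ for $j\ge 2$; this is precisely what lets the $i$-th factor land in $X^{p_{i,i}}$ via Theorem \ref{mainequiv}. It then splits $m^{(1)}$ further into a high part (annular output, handled by the square-function estimate (\ref{marshall})) and a low part (where two $|\xi_j|$'s are comparable, so an $l^2\cdot l^2\hookrightarrow l^1$ H\"older replaces the missing Littlewood--Paley). Finally, your sketch does not cover $1<p_{i,j}<2$: with $s>nd/2$ the Cauchy--Schwarz summability forces $\sigma$ close to $d/2$ and hence $t$ close to $2$, so the maximal bound $\Vert A_j\Vert_{L^{p_{i,j}}}\lesssim\Vert f_j\Vert_{L^{p_{i,j}}}$ fails for $p_{i,j}$ near $1$. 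The paper treats that range by first establishing the case $p_{i,j}\ge 2$ via Theorem \ref{multipliermain} and then passing to $1<p_{i,j}<2$ through the transposes $T_{m^{(1)}}^{*j}$ and (\ref{multiest}).
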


\begin{theorem}\label{multipliermain}
Let $0<p<\infty$ and $0<p_{i,j}\leq \infty$, $1\leq i,j\leq n$, satisfy (\ref{conditionp}).
 Suppose $1<u\leq 2$, $0<r\leq 2$, and $m$ satisfies
$\mathcal{L}_s^{u,\vartheta^{(n)}}[m]<\infty$ for $s>nd/r$.
Then there exists a number $\delta>0$, satisfying $0<\delta\leq r$,  such that
\begin{equation*}
\big\Vert T_m\big( f_1,\dots,f_n\big)\big\Vert_{L^p}\lesssim \mathcal{L}_s^{u,\vartheta^{(n)}}[m]\sum_{i=1}^{n}{\Big(\Vert f_i\Vert_{X^{p_{i,i}}} \prod_{1\leq j\leq n,j\not= i}{\Vert f_j\Vert_{H^{p_{i,j}}}}\Big)}
\end{equation*}
whenever $r-\delta<p_{i,j}\leq \infty$ for $1\leq i,j\leq n$.

\end{theorem}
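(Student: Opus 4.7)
The plan is to follow the Littlewood--Paley plus Fourier-series framework of Tomita and Grafakos--Si, but to reorganize the final H\"older step so that one factor can be estimated in $BMO$ by means of the new $L^{\infty}(l^{2})$-description of $BMO$ from Corollary \ref{maincor}. Let $\Psi$ be a smooth bump supported in $\{2^{-2}\le |\xxi|\le 2^{2}\}$ with $\sum_{l\in\mathbb{Z}}\Psi(\xxi/2^l)\equiv 1$ away from the origin, and decompose $T_m=\sum_l T_{m\Psi(\cdot/2^l)}$. After the dilation $\xxi\mapsto 2^l\xxi$, the function $m_l(\xxi):=m(2^l\xxi)\Psi(\xxi)$ is supported in a fixed annulus of $(\mathbb{R}^d)^n$, and an expansion in Fourier series on a sufficiently large cube gives
\begin{equation*}
m_l(\xxi)=\sum_{\vec{\nu}\in\mathbb{Z}^{nd}}b_{\vec{\nu}}^{(l)}e^{2\pi i\langle\vec{\nu},\xxi\rangle},\qquad \big\|(1+|\vec{\nu}|)^{s}b_{\vec{\nu}}^{(l)}\big\|_{\ell^{u'}_{\vec{\nu}}}\lesssim \mathcal{L}_s^{u,\vartheta^{(n)}}[m],
\end{equation*}
uniformly in $l$, the coefficient bound coming from the Hausdorff--Young inequality. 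Substituting this back produces
\begin{equation*}
T_{m\Psi(\cdot/2^l)}(f_1,\ldots,f_n)(x)=\sum_{\vec{\nu}}b_{\vec{\nu}}^{(l)}\prod_{j=1}^{n}\big(\widetilde{\phi}_l\ast f_j\big)\big(x+2^{-l}\vec{\nu}_j\big),
\end{equation*}
where $\widetilde{\phi}_l$ is a Littlewood--Paley bump at level $l$ slightly larger than the projection of $\mathrm{supp}\,\Psi$.

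Since the conclusion is a symmetric sum over $i$, by relabeling it suffices to bound each summand separately; I fix $i=1$ and aim for
\begin{equation*}
\|T_m(f_1,\ldots,f_n)\|_{L^p}\lesssim \mathcal{L}_s^{u,\vartheta^{(n)}}[m]\,\|f_1\|_{X^{p_{1,1}}}\prod_{j=2}^{n}\|f_j\|_{H^{p_{1,j}}}.
\end{equation*}
Each translated Littlewood--Paley piece is controlled by a Peetre-type maximal function,
\begin{equation*}
\big|\widetilde{\phi}_l\ast f_j(x+2^{-l}\vec{\nu}_j)\big|\le(1+|\vec{\nu}_j|)^{\sigma_j}\mathfrak{M}_{\sigma_j,2^l}^{t_j}(\widetilde{\phi}_l\ast f_j)(x),
\end{equation*}
and I choose $t_j,\sigma_j$ satisfying $\sigma_j>d/t_j>d/(r-\delta)$ with $\sum_j\sigma_j$ small enough that the $\vec{\nu}$-sum converges when tested against the weighted Hausdorff--Young bound above; the strict inequality $s>nd/r$ provides the slack, which is what produces the parameter $\delta>0$ and the restriction $r-\delta<p_{1,j}\le\infty$. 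After the $\vec{\nu}$-summation the problem reduces to
\begin{equation*}
\Big\|\sum_{l\in\mathbb{Z}}\prod_{j=1}^{n}\mathfrak{M}_{\sigma_j,2^l}^{t_j}(\widetilde{\phi}_l\ast f_j)\Big\|_{L^p}\lesssim \|f_1\|_{X^{p_{1,1}}}\prod_{j=2}^{n}\|f_j\|_{H^{p_{1,j}}}.
\end{equation*}

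To upgrade this to a vector-valued estimate I exploit that for each $l$ the function $T_{m\Psi(\cdot/2^l)}(f_1,\ldots,f_n)$ has output frequencies of size $\lesssim 2^l$; hence, after a Littlewood--Paley decomposition of the output and a Cauchy--Schwarz in $l$, the $l$-sum is dominated by its $l^2$-norm. H\"older's inequality with exponents $p_{1,1},\ldots,p_{1,n}$ then distributes the $L^p$-norm across the $n$ factors, producing a product of $L^{p_{1,j}}(l^2)$ norms of the sequences $\{\mathfrak{M}_{\sigma_j,2^l}^{t_j}(\widetilde{\phi}_l\ast f_j)\}_{l\in\mathbb{Z}}$. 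For $j\ge 2$, the equivalence (\ref{briefcharacter}) identifies each such sequence norm with $\|f_j\|_{H^{p_{1,j}}}$ via standard Littlewood--Paley characterizations; for $j=1$ with $p_{1,1}<\infty$ the same equivalence applies, while for $p_{1,1}=\infty$ I invoke Corollary \ref{maincor} to equate the $L^{\infty}(l^2)$-norm with $\|f_1\|_{BMO}$, noting that replacing each Peetre maximum by its restriction to the set $S_Q$ with an infimum over $Q$ only decreases the quantity and so is harmless on the upper-bound side.

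The main obstacle is the simultaneous satisfaction of three coupled conditions on the auxiliary parameters $(t_j,\sigma_j)$: the smoothness budget making the Hausdorff--Young sum convergent when weighted by $\prod_j(1+|\vec{\nu}_j|)^{\sigma_j}$; the Peetre admissibility $\sigma_j>d/t_j$; and the requirement $d/t_j<d/\min(p_{1,j},2)$ needed for (\ref{briefcharacter}) and Corollary \ref{maincor} to apply. The hypothesis $s>nd/r$ creates exactly the room to satisfy all three, and the slack defines $\delta$. The step that genuinely uses the new input of the paper is the case $p_{1,1}=\infty$: the usual $L^{p_{1,1}}(l^2)$-characterization of $H^{p_{1,1}}$ is unavailable there, and must be replaced by the $L^\infty(l^2)$-description of $BMO$ furnished by Corollary \ref{maincor}, which is precisely what lies beyond the Grafakos--Si argument.
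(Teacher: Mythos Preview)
Your argument has a structural gap at the step ``the $l$-sum is dominated by its $l^2$-norm.'' The output frequency of $T_{m\Psi(\cdot/2^l)}(f_1,\dots,f_n)$ is $\xi_1+\dots+\xi_n$, which is only bounded \emph{above} by $\approx 2^l$; there is no lower bound, so for each output Littlewood--Paley level $h$ you pick up contributions from \emph{all} $l\ge h-C$, and no Cauchy--Schwarz in $l$ rescues this. The paper handles this by first decomposing according to which $|\xi_i|$ is largest, $m=\sum_i m^{(i)}$ (this is what actually produces the $n$-term sum in the conclusion --- your ``by relabeling it suffices to fix $i=1$'' misreads the role of that sum), and then splitting $m^{(1)}$ further into a high-frequency part (all $k_j\le k_1-c_n$) where the output frequency really is $\approx 2^{k_1}$ and (\ref{marshall}) applies, and a low-frequency part (some $k_j$ close to $k_1$) where two factors carry genuine band-pass pieces $\phi_{k}\ast f_1$, $\phi_{k}\ast f_2$ and the raw $l^1$-sum is controlled via $l^1\subset l^2\cdot l^2\cdot l^\infty\cdots l^\infty$ H\"older.

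A second, related error is in your final H\"older step. In your symmetric decomposition the bumps $\widetilde{\phi}_l$ applied to $f_2,\dots,f_n$ must be low-pass (the projection of $\mathrm{supp}\,\Psi$ onto the $\xi_j$-axis is a full ball, not an annulus), so the sequence $\{\mathfrak{M}_{\sigma_j,2^l}^{t_j}(\widetilde{\phi}_l\ast f_j)\}_l$ has $L^{p_{1,j}}(l^2)$ norm equal to $+\infty$ in general, not $\|f_j\|_{H^{p_{1,j}}}$. What one actually uses for those factors is the $l^\infty$ bound via the maximal characterization (\ref{localhardy}), and this forces the $l^2$ weight to land on the factor(s) carrying a genuine band-pass $\phi_k\ast f_i$ --- which again requires the asymmetric paraproduct splitting above. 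The Fourier-series/Hausdorff--Young route to the pointwise bound is fine (the paper uses the equivalent Nikolskii-plus-H\"older argument leading to (\ref{keyestimate})), but without the high/low frequency decomposition of $T_{m^{(1)}}$ the proof does not close.
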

We remark that, under the same hypothesis $s>nd/r$,
the condition $\mathcal{L}_s^{r,\vartheta^{(n)}}[m]<\infty$ in Theorem \ref{previousmulti} is improved to  $\mathcal{L}_s^{u,\vartheta^{(n)}}[m]<\infty$ for any $1<u\leq 2$ in Theorem \ref{multipliermain}.
Due to the independence of $r$ in $\mathcal{L}_s^{u,\vartheta^{(n)}}[m]<\infty$, one has better freedom in the range $0<r\leq 2$ and $r-\delta<p_{i,j}\leq \infty$.

\subsection{Multilinear pseudo-differential operators of type $(1,1)$}

The $n$-linear H\"ormander symbol class $\MM_n\mathcal{S}_{1,1}^{m}$ consists of all $a\in C^{\infty}\big((\mathbb{R}^d)^{n+1}\big)$ having the property that for all multi-indices $\alpha_1$,$\dots$,$\alpha_n$,$\beta$ there exists a constant $C=C_{\aa,\beta}$ such that
\begin{equation*}
\big| \partial_{\xxi}^{\aa}\partial_{x}^{\beta}a(x,\xxi)\big|\leq C\Big(1+\sum_{j=1}^{n}|\xi_j| \Big)^{m-|\aa|+|\beta|}
\end{equation*} where $\aa:=(\alpha_1,\dots,\alpha_n)$ and $|\aa|:=|\alpha_1|+\dots +|\alpha_n|$.
The corresponding $n$-linear pseudo-differential operator $T_{[a]}$ is defined by 
\begin{equation*}
T_{[a]}\big( f_1,\dots,f_n\big)(x):=\int_{(\mathbb{R}^d)^n}{a(x,\xxi)\prod_{j=1}^{n}\widehat{f_j}(\xi_j)e^{2\pi i\langle x,\sum_{j=1}^{n}\xi_j\rangle}}d\xxi
\end{equation*} for $f_1,\dots,f_n\in S(\mathbb{R}^d)$.
Denote by  $Op\MM_n\mathcal{S}_{1,1}^m$ the class of $n$-linear pseudo-differential operators with symbols in $\MM_n\mathcal{S}_{1,1}^{m}$.
Bilinear pseudo-differential operators(n=2) in $Op\MM_2\mathcal{S}_{1,1}^{0}$ have  bilinear Calder\'on-Zygmund kernels, but in general they are not bilinear Calder\'on-Zygmund operators. In particular, they do not always give rise to a mapping $L^{p_1}\times L^{p_2}\to L^p$ for $1<p,p_1,p_2\leq \infty$ with $1/p=1/p_1+1/p_2$.

The boundedness properties of operators in $Op\MM_2\mathcal{S}_{1,1}^{0}$ have been studied by B\'enyi-Torres \cite{Be_To}, and B\'enyi-Nahmod-Torres \cite{Be_Na_To} in the scale of Lebesgue-Sobolev spaces. To be specific, B\'enyi-Torres \cite{Be_To} proved that if $a\in\MM_2\mathcal{S}_{1,1}^0$, then
\begin{equation*}
\big\Vert T_{[a]}(f_1,f_2)\big\Vert_{L_s^p}\lesssim \Vert f_1\Vert_{L_s^{p_1}}\Vert f_2\Vert_{L^{p_2}}+\Vert f_1\Vert_{L^{p_1}}\Vert f_2\Vert_{L_s^{p_2}}
\end{equation*} for $1<p_1,p_2,p<\infty$, $1/p_1+1/p_2=1/p$, and $s>0$.
Moreover, this result was generalized to $a\in\MM_2\mathcal{S}_{1,1}^m$, $m\in\mathbb{R}$, by B\'enyi-Nahmod-Torres \cite{Be_Na_To}.
Naibo \cite{Na} investigated bilinear pseudo-differential operators on Triebel-Lizorkin spaces and  Koezuka-Tomita \cite{Ko_To} slightly developed the result of Naibo.
These works can be readily extended to multilinear operators.
For $a\in \MM_n\mathcal{S}_{1,1}^{m}$ and $N\in\mathbb{N}_0$ we define
\begin{equation*}
\Vert a\Vert_{\MM_n\mathcal{S}_{1,1,N}^{m}}:=\max{\Big[\sup\Big(1+\sum_{j=1}^{n}|\xi_j|\Big)^{-m+|\aa|-|\beta|}\big|\partial_{\xxi}^{\aa}\partial_{x}^{\beta}a(x,\xxi)\big| \Big]}
\end{equation*} where the supremum is taken over $(x,\xxi)\in (\mathbb{R}^d)^{n+1}$ and the maximum is taken over $|\a_1|,\dots, |\a_n|,|\beta|\leq N$.
For $0<p,q\leq \infty$ let 
\begin{equation*}\label{taucondition}
\tau_{p}:=d/\min{(1,p)}-d, \quad \tau_{p,q}:=d/\min{(1,p,q)}-d.
\end{equation*}
\begin{customthm}{C}\cite{Ko_To, Na}\label{theorembb}
Let $0<p<\infty$, $0<q\leq \infty$, $m\in\mathbb{R}$, and $a\in\MM_n\mathcal{S}_{1,1}^{m}$. Let   $0<p_{i,j}<\infty$, $1\leq i,j\leq n$, satisfy (\ref{conditionp}).
If  \begin{equation}\label{weakscondition}
s>\Big\{\begin{array}{ll}
\tau_{p,q}\quad & \text{if}\quad q<\infty\\
\tau_{p,\infty}+d\quad  & \text{if}\quad q=\infty 
\end{array},
\end{equation} then there exists a positive integer $N$ such that
\begin{equation*}
\big\Vert T_{[a]}\big( f_1,\dots,f_n\big) \big\Vert_{F_p^{s,q}}\lesssim  \Vert a\Vert_{\MM_n\mathcal{S}_{1,1,N}^{m}}\sum_{i=1}^{n}{\Big( \Vert f_i\Vert_{F_{p_{i,i}}^{s+m,q}}\prod_{1\leq j\leq n, j\not= i}{\Vert f_j\Vert_{h^{p_{i,j}}}}\Big)}
\end{equation*} for $f_1,\dots,f_n \in S(\mathbb{R}^d)$. Moreover, the inequality also holds for $p_{i,j}=\infty$, $i\not=j$.
\end{customthm}
We refer the reader to Section \ref{preliminary} for notations and definitions of some function spaces.
Recall that $h^p=L^p$ for $1<p\leq  \infty$ and $F_p^{s,2}=h^p_s$ for $0<p<\infty$.

Note that the condition (\ref{weakscondition}) in Theorem \ref{theorembb} is due to the multiplier theorem of Triebel \cite{Tr}. Recently, the author \cite{Park5} has improved the result of Triebel, sharpening the condition on  $s$, and extending the multiplier theorem to $p=\infty$ in the scale of Triebel-Lizorkin space. Using this result and Theorem \ref{mainequiv} we will extend Theorem \ref{theorembb} to the full range $0<p,p_{i,j}\leq \infty$ with the weaker condition $s>\tau_{p,q}$, instead of (\ref{weakscondition}).

\begin{theorem}\label{pseudotheorem}
Suppose $0<p,q\leq \infty$, $m\in\mathbb{R}$, and $a\in \MM_n\mathcal{S}_{1,1}^{m}$. 
 Let $0<p_{i,j}\leq \infty$, $1\leq i,j\leq n$, satisfy (\ref{conditionp}).
If $s>\tau_{p,q}$, then there exists a positive integer $N$ such that 
\begin{equation*}
\big\Vert T_{[a]}\big(f_1,\dots,f_n\big) \big\Vert_{F_p^{s,q}}\lesssim  \Vert a\Vert_{\MM_n\mathcal{S}_{1,1,N}^{m}}\sum_{i=1}^{n}{\Big( \Vert f_i\Vert_{F_{p_{i,i}}^{s+m,q}}\prod_{1\leq j\leq n, j\not= i}{\Vert f_j\Vert_{h^{p_{i,j}}}}\Big)}
\end{equation*}
for $f_1,\dots,f_n \in S(\mathbb{R}^d)$.
\end{theorem}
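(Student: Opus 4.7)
The plan is to follow the paraproduct strategy for symbols of type $(1,1)$ refined by Naibo and Koezuka--Tomita, but with two new tools substituted in: the $L^\infty(l^q)$ characterization of Theorem \ref{mainequiv}, which stands in for the Fefferman--Stein vector-valued maximal inequality that fails at $p=\infty$, and the author's sharpened multiplier theorem from \cite{Park5}, which is what allows the condition $s>\tau_{p,q}+d$ in \eqref{weakscondition} (when $q=\infty$) to be relaxed to $s>\tau_{p,q}$.

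First I would decompose $a=\sum_{k\geq 0}a_k$ with $a_k(x,\xxi)$ Fourier-localized in $\xxi$ to $|\xxi|\sim 2^k$, so that the symbol hypothesis yields $|\partial_x^\beta\partial_\xxi^{\aa}a_k(x,\xxi)|\lesssim 2^{k(m+|\beta|-|\aa|)}\|a\|_{\MM_n\mathcal{S}_{1,1,N}^m}$. Simultaneously expanding $f_j=\sum_{l_j}\phi_{l_j}\ast f_j$, I would write
\begin{equation*}
T_{[a]}(f_1,\ldots,f_n)=\sum_{k}\sum_{l_1,\ldots,l_n}T_{[a_k]}(\phi_{l_1}\ast f_1,\ldots,\phi_{l_n}\ast f_n)
\end{equation*}
and partition the summation into $n+1$ regions $R_0,R_1,\ldots,R_n$, where $R_i$ corresponds to $l_i$ being strictly maximal among $\{k,l_1,\ldots,l_n\}$ and $R_0$ is the resonant region $k\approx\max_{j}l_{j}$. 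The Fourier support of each block after applying $\psi_\mu\ast$ is controlled by this grouping: in $R_i$ only $\mu\sim l_i$ contributes, while in $R_0$ only $\mu\lesssim k+O(1)$ contributes.

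For the off-diagonal pieces $R_i$, standard kernel bounds together with \eqref{briefcharacter} represent $\psi_\mu\ast T_{[a_k]}(\ldots)$ as a product of $\phi_{l_i}\ast f_i$ with Peetre-type maximal functions of $\phi_{l_j}\ast f_j$ for $j\neq i$, multiplied by $2^{km}\|a\|_{\MM_n\mathcal{S}_{1,1,N}^m}$. Taking $F_p^{s,q}$ quasi-norms and invoking the sharpened multiplier theorem of \cite{Park5} converts the $k$-sum into the right-hand side of the claimed inequality, with the regularity hypothesis $s>\tau_{p,q}$ emerging precisely as the condition that makes the geometric sums in $|\mu-k|$ converge. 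The resonant piece $R_0$ is treated by almost orthogonality: the $x$-regularity of $a_k$ forces rapid decay of the $x$-Fourier transform of $a_k(\cdot,\xxi)$, so $\psi_\mu\ast T_{[a_k]}(\ldots)$ is negligible for $\mu\gg k$, and for $\mu\lesssim k$ one is reduced to the off-diagonal argument.

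The main obstacle is the case $p=\infty$ (or some $p_{i,j}=\infty$): the Fefferman--Stein vector-valued maximal inequality is unavailable, so one cannot pull H\"older's inequality through the $l^q$-norm over dyadic scales in the standard way. This is where Theorem \ref{mainequiv} becomes essential. I would use it to rewrite $\|\cdot\|_{F_\infty^{s,q}}$ as a supremum over dyadic cubes $P$ of an average of the form $|P|^{-1}\int_P\sum_{k\geq -\log_2 l(P)}|\cdot|^q\,dx$, fix an arbitrary such $P$, and apply H\"older's inequality on that single cube. Inside the fixed-cube integral the product structure of $T_{[a_k]}(\phi_{l_1}\ast f_1,\ldots,\phi_{l_n}\ast f_n)$ separates, after which factors indexed by $j\neq i$ contribute $\|f_j\|_{h^{p_{i,j}}}$ --- using \eqref{briefcharacter} when $p_{i,j}<\infty$ and Corollary \ref{maincor} together with the $F_\infty^{s,q}$ analogue of Theorem \ref{mainequiv} when $p_{i,j}=\infty$ --- while the distinguished factor $f_i$ contributes $\|f_i\|_{F_{p_{i,i}}^{s+m,q}}$. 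Fitting these three ingredients together carefully, in each of the regions $R_0,R_1,\ldots,R_n$, is where the technical work of the proof is concentrated.
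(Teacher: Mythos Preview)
Your overall strategy is right and matches the paper's: decompose by which frequency variable dominates, obtain pointwise bounds in terms of Peetre maximal functions, invoke the sharpened multiplier lemma from \cite{Park5}, and call on the machinery of Section~\ref{charactersection} to handle $p_{i,i}=\infty$. But one essential technical device is missing, and the region decomposition as you state it is misformulated.

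On the decomposition: once $a_k$ is localized to $|\xxi|\sim 2^k$, the Fourier supports force $\max_j l_j\approx k$ automatically, so your regions $R_1,\ldots,R_n$ (where some $l_i$ strictly exceeds $k$) collapse into $R_0$. The paper instead splits $a=a^{(1)}+\cdots+a^{(n)}$ according to which $\xi_i$ dominates among $\xi_1,\dots,\xi_n$ (not which $l_i$ versus $k$), and for $a^{(1)}$ localizes $|\xi_1|\sim 2^k$, $|\xi_j|\lesssim 2^k$ for $j\ge 2$.

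More substantively, the step you compress into one sentence --- ``the $x$-regularity of $a_k$ forces rapid decay of the $x$-Fourier transform'' --- is where the real work lies. The paper implements this through the Coifman--Meyer/B\'enyi--Torres Fourier series expansion $a_k(x,\xxi)=\sum_{\ll}c_k^{\ll}(x)\varphi_k^{l_1}(\xi_1)\cdots\vartheta_k^{l_n}(\xi_n)$, which separates the $x$-dependence from $\xxi$, followed by a further Littlewood--Paley decomposition $c_k^{\ll}=\sum_{u\ge 0}c_{k,u}^{\ll}$ in the $x$-variable. Only after introducing $u$ does each piece $T_{[A_{k,u}^{\ll}]}\fff$ have a controlled output Fourier support $\{|\xi|\lesssim 2^{k+u}\}$; this is precisely what allows one to feed the pieces into Lemma~\ref{propo} and produce the two geometric factors $2^{-v(s-t+d/2)}$ and $2^{-u(N-s)}$. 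Your ``standard kernel bounds'' do give the right pointwise product of Peetre maximal functions, but without the $(u,v)$ accounting there is no mechanism to sum over output frequencies $h$: the $x$-Fourier content of $a_k$ has no compact support, only decay, and that decay must be traded quantitatively against the hypothesis $s>\tau_{p,q}$.

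Finally, a small correction about where Theorem~\ref{mainequiv} enters. In the paper it is used (via Corollary~\ref{cor2}) to bound the factor $\Vert f_i\Vert_{F_{p_{i,i}}^{s+m,q}}$ when $p_{i,i}=\infty$ and $q<\infty$, \emph{after} the $S_Q$/H\"older step, in the regime $0<p<\infty$. The output norm in the case $p=\infty$, $0<q<\infty$ is handled by a direct Carleson-measure computation using Lemma~\ref{propo}(2) and Lemma~\ref{maximal3}(2), not by Theorem~\ref{mainequiv}.
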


As a corollary, from $h_s^p=F_{p}^{s,2}$ and $bmo_s=F_{\infty}^{s,2}$, the following estimates hold.
Let \begin{equation*}
Y_s^p:=\Big\{\begin{array}{ll}
h_s^p\quad & \text{if}\quad p<\infty\\
bmo_s \quad  & \text{if}\quad p=\infty 
\end{array}.
\end{equation*}
\begin{corollary}\label{maincorollary}
Suppose $0<p\leq \infty$, $m\in\mathbb{R}$, and $a\in \MM_n\mathcal{S}_{1,1}^{m}$. 
 Let $\{p_{i,j}\}_{1\leq i,j\leq n}$ satisfy $0<p_{i,j}\leq \infty$ and \eqref{conditionp}.
If $s>\tau_{p}$, then there exist positive integers $N>0$ such that
\begin{equation*}
\big\Vert T_{[a]}\big(f_1,\dots,f_n \big) \big\Vert_{Y_s^p}\lesssim  \Vert a\Vert_{\MM_n\mathcal{S}_{1,1,N}^{m}}\sum_{i=1}^{n}{\Big( \Vert f_i\Vert_{Y^{p_{i,i}}_{s+m}}\prod_{1\leq j\leq n, j\not= i}{\Vert f_j\Vert_{h^{p_{i,j}}}}\Big)}
\end{equation*}
for $f_1,\dots,f_n \in S(\mathbb{R}^d)$.

\end{corollary}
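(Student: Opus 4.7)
The plan is to derive Corollary \ref{maincorollary} as a direct specialization of Theorem \ref{pseudotheorem} to the case $q=2$. First I would verify that the regularity threshold matches: since $2 \geq 1$, we have $\min(1,p,2) = \min(1,p)$, so $\tau_{p,2} = d/\min(1,p) - d = \tau_p$. Hence the hypothesis $s > \tau_p$ is exactly the hypothesis $s > \tau_{p,q}$ required by Theorem \ref{pseudotheorem} when $q=2$, and we do not need to invoke the exceptional case $q=\infty$ from that theorem.

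Next I would translate the Triebel--Lizorkin norms appearing in Theorem \ref{pseudotheorem} into the $Y_s^p$ notation. Using the identifications $F_p^{s,2} = h_s^p$ for $0 < p < \infty$ and $F_\infty^{s,2} = bmo_s$ recalled just before the corollary, the left-hand side $\Vert T_{[a]}(f_1,\dots,f_n) \Vert_{F_p^{s,2}}$ becomes $\Vert T_{[a]}(f_1,\dots,f_n) \Vert_{Y_s^p}$ in both regimes $p < \infty$ and $p = \infty$. Similarly, on the right-hand side, each factor $\Vert f_i \Vert_{F_{p_{i,i}}^{s+m,2}}$ becomes $\Vert f_i \Vert_{Y_{s+m}^{p_{i,i}}}$, again uniformly whether $p_{i,i}$ is finite or infinite. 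The mixed factors $\Vert f_j\Vert_{h^{p_{i,j}}}$ in Theorem \ref{pseudotheorem} already appear in exactly the form required by the corollary and need no reinterpretation.

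Thus the proof reduces to quoting Theorem \ref{pseudotheorem} with $q=2$, and the integer $N$ promised by the corollary can be taken to be the one supplied by that theorem for the choice $q=2$. Since the derivation is purely notational once the Triebel--Lizorkin identifications are in place, there is no real obstacle to address; the substance of the argument is entirely contained in Theorem \ref{pseudotheorem}, whose proof in turn rests on Theorem \ref{mainequiv} together with the multiplier theorem from \cite{Park5}.
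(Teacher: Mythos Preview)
Your proposal is correct and follows exactly the paper's own route: the paper states Corollary \ref{maincorollary} immediately after Theorem \ref{pseudotheorem} with the one-line justification ``from $h_s^p=F_{p}^{s,2}$ and $bmo_s=F_{\infty}^{s,2}$,'' which is precisely your specialization to $q=2$ together with the observation $\tau_{p,2}=\tau_p$.
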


\subsubsection*{Generalization of Kato-Ponce inequality}

The classical Kato-Ponce commutator estimate \cite{Ka_Po} plays a key role in the wellposedness theory of Navier-Stokes and Euler equations in Sobolev spaces.
The commutator estimate has been recast later on into the following fractional Leibniz rule, so called Kato-Ponce inequality.
Let $J^s:=(1-\Delta)^{s/2}$ be the (inhomogeneous) fractional Laplacian operator.
Then
\begin{equation}\label{Kato}
\big\Vert J^s(fg)\big\Vert_{L^p}\lesssim \big\Vert J^sf\big\Vert_{L^{p_1}}\Vert g\Vert_{L^{p_2}}+\Vert f\Vert_{L^{\widetilde{p}_1}}\big\Vert J^sg\big\Vert_{L^{\widetilde{p}_2}}
\end{equation} where $1/p=1/p_1+1/p_2=1/\widetilde{p}_1+1/\widetilde{p}_2$, $1<p<\infty$, and $1<p_1,p_2,\widetilde{p}_1,\widetilde{p}_2\leq \infty$.
Grafakos-Oh \cite{Gr_Oh} and Muscalu-Schlag \cite{Mu_Sc} extended the inequality (\ref{Kato}) to the wider range $1/2<p<\infty$ under the assumption that $s>\tau_p$ or $s\in 2\mathbb{N}$.
Recently, Naibo-Thomson \cite{Na_Tho} extend it to (weighted) local Hardy space for $0<p,p_1,p_2,\widetilde{p}_1,\widetilde{p}_2<\infty$. 

\begin{customthm}{D}\label{previousKato}
Let $0<p,p_1,p_2,\widetilde{p}_1,\widetilde{p}_2< \infty$  satisfy $1/p=1/p_1+1/p_2=1/\widetilde{p}_1+1/\widetilde{p}_2$. Suppose $s>\tau_p$. Then for $f,g\in S(\mathbb{R}^d)$ one has
\begin{equation*}
\big\Vert J^s(fg)\big\Vert_{h^p}\lesssim \Vert J^sf\Vert_{h^{p_1}}\Vert g\Vert_{h^{p_2}}+\Vert f\Vert_{h^{\widetilde{p}_1}}\Vert J^sg\Vert_{h^{\widetilde{p}_2}}
\end{equation*}
\end{customthm}

Additionally, the case $p=\infty$ was settled by Bourgain-Li \cite{Bo_Li} and $BMO$ estimates for homogeneous Laplacian operators $D^s:=(-\Delta)^{s/2}$ was established by Brummer-Naibo \cite{Br_Na}.

As a consequence of Corollary \ref{maincorollary} in the case $a\equiv 1$, one obtains the following extension of Kato-Ponce inequality, which includes an endpoint case of $bmo$ type.
\begin{corollary}\label{KatoPonce}
Let $0<p,p_1,p_2,\widetilde{p}_1,\widetilde{p}_2\leq \infty$  satisfy $1/p=1/p_1+1/p_2=1/\widetilde{p}_1+1/\widetilde{p}_2$.
 Suppose $s>\tau_p$. Then for $f,g \in S(\mathbb{R}^d)$ one has
\begin{equation*}
\big\Vert J^s(fg)\big\Vert_{Y^p}\lesssim  \Vert J^sf\Vert_{Y^{p_1}}\Vert g\Vert_{h^{p_2}}+\Vert f\Vert_{h^{\widetilde{p}_1}}\Vert J^sg\Vert_{Y^{\widetilde{p}_2}}
\end{equation*}
where 
 \begin{equation*}
Y^p:=\Big\{\begin{array}{ll}
h^p\quad & \text{if}\quad p<\infty\\
bmo \quad  & \text{if}\quad p=\infty 
\end{array}.
\end{equation*}
\end{corollary}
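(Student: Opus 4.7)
The plan is to obtain Corollary \ref{KatoPonce} as a direct specialization of Corollary \ref{maincorollary} to the bilinear case $n=2$ with the constant symbol $a\equiv 1$. First, I would verify that $a(x,\xxi)\equiv 1$ lies in $\MM_2\mathcal{S}_{1,1}^{0}$: every derivative of positive order vanishes identically and $|a|=1\le(1+|\xi_1|+|\xi_2|)^{0}$, so $\Vert a\Vert_{\MM_2\mathcal{S}_{1,1,N}^{0}}$ is a finite absolute constant for every $N\in\mathbb{N}_0$. Unwinding the definition of $T_{[a]}$ together with Fourier inversion gives $T_{[1]}(f,g)(x)=f(x)g(x)$ pointwise, so the left-hand side of Corollary \ref{maincorollary} becomes $\Vert fg\Vert_{Y_s^p}$.

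Next, I would apply Corollary \ref{maincorollary} with $n=2$, $m=0$, $a\equiv 1$, and the exponent matrix $(p_{1,1},p_{1,2})=(p_1,p_2)$ and $(p_{2,1},p_{2,2})=(\widetilde{p}_1,\widetilde{p}_2)$; both rows are compatible with \eqref{conditionp} by the hypothesis $1/p=1/p_1+1/p_2=1/\widetilde{p}_1+1/\widetilde{p}_2$, and the smoothness assumption $s>\tau_p$ is exactly the one imposed in Corollary \ref{maincorollary}. This yields
$$\Vert fg\Vert_{Y_s^p}\lesssim \Vert f\Vert_{Y_s^{p_1}}\Vert g\Vert_{h^{p_2}} + \Vert f\Vert_{h^{\widetilde{p}_1}}\Vert g\Vert_{Y_s^{\widetilde{p}_2}}.$$

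The final step is a translation of norms: from the identifications $h_s^q=F_q^{s,2}$ and $bmo_s=F_{\infty}^{s,2}$ already recorded in the excerpt, equivalently by the standard lifting property of the Bessel potential $J^s=(1-\Delta)^{s/2}$, one has $\Vert h\Vert_{Y_s^q}\approx \Vert J^s h\Vert_{Y^q}$ for every $0<q\leq\infty$ and $s\in\mathbb{R}$. Inserting this identity into each of the three $Y_s$-norms in the display above produces precisely the estimate claimed in Corollary \ref{KatoPonce}. The argument is essentially bookkeeping once Corollary \ref{maincorollary} is in hand, so the only sensitive points worth double-checking are that the constant symbol lies in the prescribed class (hence the implicit constant is independent of $f,g$) and that the exponent range $0<p_{i,j}\leq\infty$ in Corollary \ref{maincorollary} contains the full range $0<p_1,p_2,\widetilde{p}_1,\widetilde{p}_2\leq\infty$ considered here---both are immediate.
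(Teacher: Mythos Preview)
Your proposal is correct and follows exactly the approach indicated in the paper, which states that Corollary~\ref{KatoPonce} is obtained ``as a consequence of Corollary~\ref{maincorollary} in the case $a\equiv 1$.'' Your verification that $a\equiv 1\in\MM_2\mathcal{S}_{1,1}^{0}$, the identification $T_{[1]}(f,g)=fg$, the choice of exponent matrix, and the translation $\Vert h\Vert_{Y_s^q}\approx\Vert J^sh\Vert_{Y^q}$ are all correct and constitute a complete proof.
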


\hfill

The main ingredient in the proof of Theorem \ref{mainequiv} is the maximal inequalities for $\mathfrak{M}_{\sigma,2^k}^{t}$, which are stated in Lemma \ref{maximal2}.
Then for $0<q<\infty$ one obtains that for any proper measurable subset $S_Q$ of $Q$ the left hand side of (\ref{equivkeyclaim}) is comparable to
\begin{equation*}
 \sup_{P\in \mathcal{D},l(P)\leq 2^{-\mu}}{\Big(\frac{1}{|P|}\int_P{\sum_{k=-\log_2{l(P)}}^{\infty}{\Big|  \sum_{Q\in\mathcal{D}_k}{\Big( \inf_{y\in Q}{\mathfrak{M}_{\sigma,2^k}^tf_k(y)}\Big)\chi_{S_Q}(x)}   \Big|^q}}dx \Big)^{1/q}}.
\end{equation*}
Thus, in order to prove Theorem \ref{mainequiv} one needs to show that there exists a subset $S_Q$ of $Q$ such that
\begin{equation*}
\sup_{P\in\mathcal{D}, l(P)\leq 2^{-\mu}}\Big(\frac{1}{|P|}\int_P{\sum_{k=-\log_2{l(P)}}^{\infty}\big| \cdots \big|^q}dx \Big)^{1/q}\approx \Big\Vert \Big(\sum_{k=\mu}^{\infty}{\big| \cdots\big|^q} \Big)^{1/q}\Big\Vert_{L^{\infty}}
\end{equation*}
and one direction is clear because the essential supremum of a function dominates the supremum of averages.  For the other direction, we take advantage of "$\gamma$-median" and its nice properties.
The proofs of Theorem \ref{multipliermain0}, \ref{multipliermain}, and \ref{pseudotheorem} are based on the Littlewood-Paley decomposition, breaking down operator $T$ in the form
$T=\sum_{i=1}^{n}{T_i}$. Then we establish 
\begin{equation*}
\Vert T_i(f_1,\dots,f_n)\Vert_{X}\lesssim \Vert f_i\Vert_{Y_i}\prod_{1\leq j\leq n, j\not=i}{\Vert f_j\Vert_{Z_j}}
\end{equation*} where $X$, $Y_i$'s, and $Z_j$'s are suitable spaces which appear in Theorem  \ref{multipliermain0}, \ref{multipliermain}, and \ref{pseudotheorem}.
The improvement of the condition $\mathcal{L}_s^{u,\vartheta^{(n)}}[m]<\infty$ in Theorem \ref{multipliermain} is provided by using Nikolskii's inequality. 
Theorem \ref{multipliermain} (with $r=u=2$) implies the case $2\leq p_1,\dots,p_n\leq \infty$ in Theorem \ref{multipliermain0}, and thus we first present the proof of Theorem \ref{multipliermain}.
Then the technique of transposes for multilinear operators in \cite{Tom} completes the proof of Theorem \ref{multipliermain0}.

The paper is organized as follows. Some preliminary results are given in Section \ref{preliminary}.
In Section \ref{charactersection} we discuss several (quasi-)norm equivalence and prove Theorem \ref{mainequiv}. In Section \ref{multimultipliersection} we prove Theorem \ref{multipliermain0} and \ref{multipliermain}.
 Section \ref{pseudosection} is devoted to the proof of Theorem \ref{pseudotheorem}.

We make some convention on notation.
Let $\mathbb{N}$ and $\mathbb{Z}$ be the collections of all natural numbers and all integers, respectively, and $\mathbb{N}_0:=\mathbb{N}\cup \{0\}$. 
We will use the symbol $A\lesssim B$ to indicate that $A\leq CB$ for some constant $C>0$, possibly different at each occurrence, and $A\approx B$ if $A\lesssim B$ and $B\lesssim A$ simultaneously.

\section{Preliminaries}\label{preliminary}

\subsection{Function spaces}\label{functionspaces}

Let $\Phi_0\in S$ satisfy $Supp(\widehat{\Phi_0})\subset \big\{\xi\in\mathbb{R}^d:|\xi|\leq 1 \big\}$ and $\widehat{\Phi_0}(\xi)=1$ for $|\xi|\leq 1/2$. Define $\phi:=\Phi_0-2^{-d}\Phi(2^{-1}\cdot)$ and $\phi_k:=2^{kd}\phi(2^k\cdot)$.
Then $\{\Phi_0\}\cup \{\phi_k\}_{k\in\mathbb{N}}$ and $\{\phi_k\}_{k\in\mathbb{Z}}$ form inhomogeneous and homogeneous Littlewood-Paley partition of unity, respectively.
Note that $Supp(\widehat{\phi_k})\subset \big\{\xi\in\mathbb{R}^d: 2^{k-2}\leq |\xi|\leq 2^{k}\big\}$ and
\begin{equation*}
\widehat{\Phi_0}(\xi)+\sum_{k\in\mathbb{N}}{\widehat{\phi_k}(\xi)}=1, \quad \text{(inhomogeneous)}
\end{equation*}
\begin{equation*}
\sum_{k\in\mathbb{Z}}{\widehat{\phi_k}(\xi)}=1, \quad \xi\not= 0.  \quad \text{(homogeneous)}
\end{equation*}

For $0<p,q\leq \infty$ and $s\in\mathbb{R}$, inhomogeneous Triebel-Lizorkin space $F_p^{s,q}$ is the collection of all $f\in S'$ such that
\begin{equation*}
\Vert f\Vert_{F_p^{s,q}}:=\Vert \Phi_0\ast f\Vert_{L^p}+\big\Vert \big\{2^{sk}\phi_k\ast f \big\}_{k\in\mathbb{N}} \big\Vert_{L^p(l^q)}<\infty, \quad 0<p<\infty ~\text{ or }~ p=q=\infty,
\end{equation*}
\begin{equation*}
\Vert f\Vert_{F_{\infty}^{s,q}}:=\Vert \Phi_0\ast f\Vert_{L^{\infty}}+\sup_{P\in\mathcal{D}, l(P)<1}{\Big( \frac{1}{|P|}\int_P{\sum_{k=-\log_2{l(P)}}^{\infty}{2^{skq}\big| \phi_k\ast f(x)\big|^q}}dx\Big)^{1/q}}, \quad 0<q<\infty
\end{equation*} where the supremum is taken over all dyadic cubes whose side length is less than $1$.
Similarly, homogeneous  Triebel-Lizorkin space $\dot{F}_p^{s,q}$ is defined to be the collection of all $f\in S'/\mathcal{P}$ (tempered distribution modulo polynomials) such that 
 \begin{equation*}
 \Vert f\Vert_{\dot{F}_p^{s,q}}:=\big\Vert \big\{ 2^{sk}\phi_k\ast f\big\}_{k\in\mathbb{Z}}\big\Vert_{L^p(l^q)}<\infty, \quad 0<p<\infty ~\text{ or }~ p=q=\infty,
 \end{equation*}
 \begin{equation*}
\Vert f\Vert_{\dot{F}_{\infty}^{s,q}}:=\sup_{P\in\mathcal{D}}{\Big( \frac{1}{|P|}\int_P{\sum_{k=-\log_2{l(P)}}^{\infty}{2^{skq}\big| \phi_k\ast f(x)\big|^q}}dx\Big)^{1/q}}, \quad 0<q<\infty.
\end{equation*} 
 
 Then these spaces provide a general framework that unifies classical function spaces.
\begin{align*}
& L^p\text{space} &\dot{{F}}_p^{0,2} = {F}_p^{0,2}=L^p & &1<p<\infty \\
&\text{Hardy space} &\dot{{F}}_p^{0,2}={H}^p , \quad   F_p^{0,2}=h^p & &0<p\leq 1\\
&\text{Fractional Sobolev space}&\dot{{F}}_p^{{s},2}=\dot{L}^p_{{s}}, \quad  {F}_p^{{s},2}=L^p_{{s}}  & &  1<p<\infty\\
&\text{Hardy-Sobolev space}&\dot{{F}}_p^{{s},2}={H}^p_{{s}}, \quad  {F}_p^{{s},2}=h^p_{{s}}  & &  0<p\leq 1\\
&BMO, bmo &\dot{{F}}_{{\infty}}^{0,2}=BMO,  \quad  {{F}}_{{\infty}}^{0,2}=bmo\\
&\text{Sobolev-$BMO$} &\dot{{F}}_{{\infty}}^{s,2}=BMO_s,  \quad  {{F}}_{{\infty}}^{s,2}=bmo_s.
\end{align*}

Recall that for $0<p\leq \infty$
\begin{equation}\label{localhardy}
 \Vert f\Vert_{h^p}\approx\Big\Vert \sup_{k\in\mathbb{N}_0}{\big|\Phi_k\ast f\big|}\Big\Vert_{L^p}, \quad \Vert f\Vert_{H^p}\approx\Big\Vert \sup_{k\in\mathbb{Z}}{\big|\Phi_k\ast f\big|}\Big\Vert_{L^p}
\end{equation}
where  $\Phi_k:= 2^{kd}\Phi_0(2^k\cdot)$, and the space $bmo$ is a localized version of $BMO$ defined as the set of locally integrable functions $f$ satisfying
\begin{equation*}
\Vert f\Vert_{bmo}:=\sup_{l(Q)\leq 1}{\frac{1}{|Q|}\int_Q{\big| f(x)-f_Q\big|}dx}+\sup_{l(Q)>1}{\frac{1}{|Q|}\int_Q{|f(x)|}dx}<\infty
\end{equation*} where $f_Q$ is the average of $f$ over a cube $Q$.
Moreover, for $s\in\mathbb{R}$
\begin{equation*}
\Vert f\Vert_{H_s^p}:=\Vert D^sf\Vert_{H^p} \qquad \Vert f\Vert_{BMO_s}:=\Vert D^sf\Vert_{BMO}
\end{equation*} 
\begin{equation*}
\Vert f\Vert_{h_s^p}:=\Vert J^sf\Vert_{h^p} \qquad  \Vert f\Vert_{bmo_s}:=\Vert J^sf\Vert_{bmo}
\end{equation*} 
where $\widehat{D^sf}(\xi):=|\xi|^s\widehat{f}(\xi)$ and $\widehat{J^sf}(\xi):=(1+|\xi|^2)^{s/2}\widehat{f}(\xi)$ are the fractional Laplacian operators as before. 
It is known that $(H^1)^*=BMO$, $(h^1)^*=bmo$, and 
$h^p=H^p=L^p$ for $1<p\leq \infty$.
See \cite{Fr_Ja2, Go, Tr} for more details.

\subsection{Maximal inequalities}
Let $\mathcal{M}$ be the Hardy-Littlewood maximal operator, defined by
\begin{equation*}
\mathcal{M}f(x):=\sup_{x\in Q}{\frac{1}{|Q|}\int_Q{|f(y)|}dy}
\end{equation*} where the supremum is taken over all cubes containing $x$, and for $0<r<\infty$ let $\mathcal{M}_rf:=\big(\mathcal{M}(|f|^r)\big)^{1/r}$. 
Then Fefferman-Stein's vector-valued maximal inequality in \cite{Fe_St} says that for $0<p<\infty$, $0<q\leq \infty$, and $0<r<\min{(p,q)}$ one has
\begin{equation}\label{hlmax}
\big\Vert \big\{\mathcal{M}_rf_k\big\}_{k\in\mathbb{Z}}\big\Vert_{L^p(l^q)}\lesssim \big\Vert \{ f_k\}_{k\in\mathbb{Z}}\big\Vert_{L^p(l^q)}.
\end{equation}
Clearly, (\ref{hlmax}) also holds when $p=q=\infty$.

We now introduce a variant of Hardy-Littlewood maximal function.
For $\epsilon\geq0$, $r>0$, and $k\in\mathbb{Z}$, let $\mathcal{M}_r^{k,\epsilon}$be defined by
\begin{align*}
\mathcal{M}_r^{k,\epsilon}f(x)&:=\sup_{x\in Q, 2^kl(Q)\leq 1}{\Big( \frac{1}{|Q|}\int_{Q}{|f(y)|^r}dy  \Big)^{1/r}}\\
  &\relphantom{=}+\sup_{x\in Q, 2^{k}l(Q)>1}{\big(2^kl(Q)\big)^{-\epsilon}\Big( \frac{1}{|Q|}\int_Q{|f(y)|^r}dy  \Big)^{1/r}}. \nonumber
\end{align*}
Note that $\mathcal{M}_r^{k,\epsilon}f(x)$ is decreasing function of $\epsilon$, and   $\mathcal{M}_r^{k,0}f(x)\approx \mathcal{M}_rf(x)$.
Then the following maximal inequality holds for the case $p=\infty$ and $0<q<\infty$.
\begin{lemma}\label{maximal1}\cite{Park1}
Let $0<r<q<\infty$, $\epsilon>0$, and $\mu\in\mathbb{Z}$. Suppose $A>0$ and $f_k\in\mathcal{E}(A2^k)$ for each $k\in\mathbb{Z}$.
Then one has
\begin{equation*}
\sup_{P\in\mathcal{D}_{\mu}}{\Big(  \frac{1}{|P|}\int_P{  \sum_{k=\mu}^{\infty}{  \big( \mathcal{M}_r^{k,\epsilon}f_k(x)  \big)^{q}      }    }dx  \Big)^{1/q}} \lesssim \sup_{R\in\mathcal{D}_{\mu}}{\Big(  \frac{1}{|R|}\int_R{  \sum_{k=\mu}^{\infty}{   |f_k(x)|^q   }    }dx  \Big)^{1/q}}. 
\end{equation*}
Here, the implicit constant of the inequality is independent of $\mu$.
\end{lemma}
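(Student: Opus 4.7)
The plan is to fix $P \in \mathcal{D}_\mu$ and prove $\big(\frac{1}{|P|}\int_P \sum_{k\ge\mu}(\mathcal{M}_r^{k,\epsilon}f_k)^q\, dx\big)^{1/q} \lesssim I$, where $I$ denotes the right-hand side supremum; taking the supremum over $P$ on the left then closes the lemma. I would split $\mathcal{M}_r^{k,\epsilon}f_k(x)$ according to the side length of the competing cube $Q\ni x$: the small-cube part with $l(Q)\le 2^{-k}$ (no weight) and the large-cube part with $l(Q)>2^{-k}$, damped by $(2^k l(Q))^{-\epsilon}$. The natural threshold is whether $l(Q)\le l(P)=2^{-\mu}$ or $l(Q)>l(P)$, since the averaging data defining $I$ live on cubes of scale $2^{-\mu}$.

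In the range $l(Q)\le 2^{-\mu}$, every such $Q\ni x\in P$ is contained in the enlargement $3P$, so both the small-cube part and the piece of the large-cube part indexed by dyadic scales $n\ge\mu$ (where $l(Q)\approx 2^{-n}$) are pointwise dominated on $P$ by $\mathcal{M}_r(f_k\chi_{3P})(x)$, up to a harmless geometric factor from $\sum_{n\ge\mu}2^{-(k-n)\epsilon}$. Since $q>r$, Fefferman--Stein's vector-valued maximal inequality \eqref{hlmax} with $p=q$ then yields
$$\sum_{k\ge\mu}\int_P\big(\mathcal{M}_r(f_k\chi_{3P})\big)^q\,dx\lesssim \int_{3P}\sum_{k\ge\mu}|f_k|^q\,dx\le 3^d|P|\,I^q,$$
because $3P$ is covered by $3^d$ cubes in $\mathcal{D}_\mu$.

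For the remaining range $n<\mu$, where the competing $Q$ is larger than $P$, I would cover each such $Q$ by $O(1)$ dyadic cubes $R\in\mathcal{D}_n$ near $P$ (at most $2^d$, since $l(Q)\le 2^{-n}$), so that $\big(|Q|^{-1}\int_Q|f_k|^r\big)^{1/r}\lesssim \max_R\big(|R|^{-1}\int_R|f_k|^r\big)^{1/r}$. Jensen's inequality applied to the convex map $t\mapsto t^{q/r}$ (valid because $q>r$) gives $\big(|R|^{-1}\int_R|f_k|^r\big)^{q/r}\le |R|^{-1}\int_R|f_k|^q$, and since each $R\in\mathcal{D}_n$ with $n<\mu$ is a disjoint union of $\mathcal{D}_\mu$-subcubes, the latter is at most $|R|^{-1}\int_R\sum_{k'\ge\mu}|f_{k'}|^q\le I^q$. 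Hence $\big(|R|^{-1}\int_R|f_k|^r\big)^{1/r}\le I$ uniformly in $k$, and the ``large-scale'' contribution to $\mathcal{M}_r^{k,\epsilon}f_k(x)$ is bounded pointwise on $P$ by $I\cdot \sum_{n<\mu}2^{-(k-n)\epsilon}\lesssim I\cdot 2^{-(k-\mu)\epsilon}$; raising to the $q$-th power and summing in $k\ge\mu$ produces a convergent geometric series bounded by $I^q$, uniformly in $\mu$.

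The main obstacle is precisely this bridging of scales: the data defining $I$ give averaging information only at scale $2^{-\mu}$, yet $\mathcal{M}_r^{k,\epsilon}$ legitimately tests cubes of arbitrarily large size. The rescue is the combination of Jensen's inequality (upgrading the $L^r$-average over $R$ to an $L^q$-average, where the definition of $I$ can finally be invoked) with the damping $\epsilon>0$ (which supplies the geometric decay both in the scale index $n$ and, after summing in $n$, in the frequency index $k$), making the whole double sum finite with a constant independent of $\mu$.
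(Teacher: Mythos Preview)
The paper does not include a proof of this lemma; it is quoted from \cite{Park1} without argument. There is therefore no proof in the present paper to compare against, and your proposal stands on its own.

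Your argument is correct. The split at the threshold $l(Q)=l(P)=2^{-\mu}$ is the natural one: the small-scale piece localizes to $3P$ and is handled by the $L^q$-boundedness of $\mathcal{M}_r$ for $r<q$ (the full vector-valued Fefferman--Stein inequality is not strictly needed, since you apply it termwise in $k$); the large-scale piece uses Jensen to upgrade the $L^r$-average over a dyadic $R\in\mathcal{D}_n$, $n<\mu$, to an $L^q$-average, which is then controlled by $I$ after expressing $R$ as a disjoint union of $\mathcal{D}_\mu$-cubes. The damping $(2^kl(Q))^{-\epsilon}$ then yields the pointwise bound $C\,2^{-(k-\mu)\epsilon}I$, and the $k$-sum converges uniformly in $\mu$. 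Two minor remarks: first, in the Case~1 estimate the phrase ``up to a harmless geometric factor from $\sum_{n\ge\mu}2^{-(k-n)\epsilon}$'' is unnecessary, because the weight on that range is already $\le 1$ and the \emph{supremum} over $Q$ is directly dominated by $\mathcal{M}_r(f_k\chi_{3P})(x)$ without any summation; second, the band-limiting hypothesis $f_k\in\mathcal{E}(A2^k)$ is never used in your argument, so the inequality actually holds for arbitrary measurable $f_k$.
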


We now continue with some properties of the operator $\mathfrak{M}_{\sigma,2^k}^{t}$, defined in (\ref{generalpeetre}).
\begin{lemma}\label{comparelemma}
Let $\sigma>0$, $0<t\leq s\leq \infty$, and $k\in\mathbb{Z}$.
 Suppose $A>0$ and $f\in\mathcal{E}(A2^k)$.
Then
\begin{equation*}
\mathfrak{M}_{\sigma,2^k}^{s}f(x)\lesssim \mathfrak{M}_{\sigma,2^k}^{t}f(x).
\end{equation*}

\end{lemma}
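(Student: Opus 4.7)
The plan is to establish a pointwise convolution estimate on the Peetre-type integrand $g(y) := |f(x-y)|/(1+2^k|y|)^\sigma$ and then invoke Young's inequality. Since $f \in \mathcal{E}(A2^k)$, the reproducing formula $f = f * \Psi_k$ is available, where $\Psi_k := 2^{kd}\Psi(2^k\cdot)$ with $\widehat{\Psi}$ equal to $1$ on $\{|\xi|\le 2A\}$ and compactly supported. I would first prove the Plancherel-Polya-Nikol'skii inequality
\[
|f(x-y)|^t \le C_M\cdot 2^{kd}\int |f(x-z)|^t (1+2^k|y-z|)^{-M}\,dz
\]
for every $0<t<\infty$ and every sufficiently large $M>d$. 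For $t \ge 1$ this follows from the reproducing formula together with H\"older's inequality and the Schwartz decay of $\Psi$; for $0<t<1$ it is the classical Plancherel-Polya estimate, obtained by iterating the reproducing formula enough times so that the kernel still decays rapidly after a $t$-subadditivity correction.

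Combining this with the elementary weight comparison $(1+2^k|z|)/(1+2^k|y|) \le 1 + 2^k|y-z|$ and choosing $M$ large enough that $M - \sigma t > d\max(1, s/t)$, one obtains the pointwise bound
\[
g(y)^t \le C\,\bigl(g^t * K_k\bigr)(y), \qquad K_k(w) := 2^{kd}(1+2^k|w|)^{-M+\sigma t}.
\]
Since $s/t \ge 1$, Young's inequality applies at exponents $1$ and $s/t$, yielding $\|g^t\|_{L^{s/t}} \lesssim \|g^t\|_{L^1}\|K_k\|_{L^{s/t}}$. A direct scaling computation gives $\|K_k\|_{L^{s/t}} \lesssim 2^{kd(1-t/s)}$. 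Using the identities $\|g^t\|_{L^{s/t}} = \|g\|_{L^s}^t$ and $\|g^t\|_{L^1} = \|g\|_{L^t}^t$, and finally multiplying through by $2^{kd/s}$, converts this into the desired inequality $\mathfrak{M}_{\sigma,2^k}^{s}f(x) \lesssim \mathfrak{M}_{\sigma,2^k}^{t}f(x)$.

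The main obstacle is the case $0<t<1$: the quasi-Banach nature of $L^t$ prevents a direct application of Young's inequality to $g$ itself, and one cannot naively move $|\cdot|^t$ inside the reproducing convolution because $|f|^t$ is no longer band-limited. The Plancherel-Polya-Nikol'skii inequality sidesteps this obstruction by providing a sub-mean-value property for $|f|^t$ that mimics the one for $f$; once it is in hand, the rest of the argument proceeds uniformly over all $0<t\le s\le\infty$ and for any $\sigma>0$, with no need to pass through Peetre's $L^\infty$-based estimate.
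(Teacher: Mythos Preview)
Your argument is correct and shares its core ingredient with the paper's proof---both rest on the Plancherel--Polya--Nikol'skii sub-mean-value estimate for $|f|^t$ when $f$ is band-limited (the paper invokes it as ``Nikolskii's inequality'' for $0<t\le1$ and obtains the $t>1$ case from H\"older)---but the two routes diverge in how they pass from this to the norm comparison. The paper first isolates the endpoint $s=\infty$, proving $\mathfrak{M}_{\sigma,2^k}f\lesssim\mathfrak{M}_{\sigma,2^k}^{t}f$ directly, and then handles a general finite $s$ by the pointwise interpolation $g^s\le(\sup g)^{s-t}\,g^t$, which reduces everything to that endpoint. You instead recast the sub-mean-value inequality as the convolution bound $g^t\lesssim g^t*K_k$ and apply Young's inequality once at exponent $s/t$, covering every $s\ge t$ in a single stroke without passing through the Peetre $L^\infty$-maximal estimate. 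Your route is a bit more streamlined; the paper's has the advantage of making the classical Peetre inequality explicit as an intermediate result. One small slip in your write-up: the stated condition $M-\sigma t>d\max(1,s/t)$ is vacuous at $s=\infty$. What you actually need is $(M-\sigma t)\,s/t>d$ for finite $s$ and merely $M>\sigma t$ when $s=\infty$ (so that $K_k\in L^\infty$); both are available since $M$ may be taken arbitrarily large.
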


\begin{lemma}\label{mcomposition}
Let $\sigma>0$, $0<t\leq s\leq \infty$, and $k\in \mathbb{Z}$. 
Then
\begin{equation*}
\mathfrak{M}_{\sigma,2^k}^{s}\mathfrak{M}_{\sigma,2^k}^tf(x)\lesssim \mathfrak{M}_{\sigma,2^k}^tf(x).
\end{equation*}

\end{lemma}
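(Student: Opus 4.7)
The plan is to reduce to the two extreme cases $s=\infty$ and $s=t$, and then interpolate between them via log-convexity of $L^{s}$ norms. The key first ingredient is the pointwise shift inequality
\begin{equation*}
\mathfrak{M}_{\sigma,2^k}^{t}f(x-y)\le (1+2^k|y|)^{\sigma}\,\mathfrak{M}_{\sigma,2^k}^{t}f(x),
\end{equation*}
which follows directly from the definition by the inner change of variable $\cdot\mapsto\cdot-y$ together with the elementary bound $(1+2^k|\eta|)\le (1+2^k|y|)(1+2^k|\eta-y|)$. Writing $g:=\mathfrak{M}_{\sigma,2^k}^{t}f$, dividing the above display by $(1+2^k|y|)^{\sigma}$ and taking the supremum in $y$ yields the $s=\infty$ case of the desired bound: $\mathfrak{M}_{\sigma,2^k}^{\infty}g(x)\le g(x)$.

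For the case $s=t$ I would unfold $g(x-y)^{t}$ in its integral form, apply Fubini, and perform the translation $w=y+u$ to obtain
\begin{equation*}
\bigl(\mathfrak{M}_{\sigma,2^k}^{t}g(x)\bigr)^{t}=2^{kd}\int |f(x-w)|^{t}\Bigl(2^{kd}\!\int\frac{dy}{(1+2^k|w-y|)^{\sigma t}(1+2^k|y|)^{\sigma t}}\Bigr)dw.
\end{equation*}
The inner integral is a weighted self-convolution of $(1+|\cdot|)^{-\sigma t}$ evaluated at $2^k w$, and splitting the $y$-integration into $\{|y|\le |w|/2\}$ and its complement one sees at once that, provided $\sigma t>d$, it is bounded by a constant multiple of $(1+2^k|w|)^{-\sigma t}$. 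Substituting this estimate back gives $\mathfrak{M}_{\sigma,2^k}^{t}g(x)\lesssim g(x)$.

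For general $t\le s\le\infty$ I would then apply the log-convexity inequality $\|h\|_{L^s}\le\|h\|_{L^\infty}^{1-t/s}\|h\|_{L^t}^{t/s}$ to $h(y):=g(x-y)/(1+2^k|y|)^{\sigma}$. The $2^{kd/s}$ prefactor in the definition of $\mathfrak{M}^{s}$ exactly cancels the $2^{-kd(t/s)/t}$ coming from $\|h\|_{L^t}^{t/s}$, producing
\begin{equation*}
\mathfrak{M}_{\sigma,2^k}^{s}g(x)\le \bigl(\mathfrak{M}_{\sigma,2^k}^{\infty}g(x)\bigr)^{1-t/s}\bigl(\mathfrak{M}_{\sigma,2^k}^{t}g(x)\bigr)^{t/s}\lesssim g(x),
\end{equation*}
which is the claimed bound. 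The single nontrivial analytic step is the weighted convolution estimate, which implicitly requires $\sigma t>d$; this is not the main obstacle but is worth flagging, since outside that range $\mathfrak{M}_{\sigma,2^k}^{t}f$ is generically infinite and the assertion is vacuous, while in every subsequent use of the lemma in the paper the parameters satisfy $\sigma>d/t$.
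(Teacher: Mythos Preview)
Your argument is correct. The shift inequality $\mathfrak{M}_{\sigma,2^k}^{t}f(x-y)\le (1+2^k|y|)^{\sigma}\mathfrak{M}_{\sigma,2^k}^{t}f(x)$ disposes of $s=\infty$ cleanly, Fubini handles $s=t$, and log-convexity bridges the two; the weighted self-convolution bound you invoke is exactly the estimate the paper cites from \cite[Appendix B]{Gr}.

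The paper's route is slightly different in organisation: for $0<t<s<\infty$ it goes in one step via Minkowski's inequality with exponent $s/t>1$, swapping the order of the two integrals so that the $y$-integral becomes precisely the same weighted convolution (with exponent $\sigma s$ rather than your $\sigma t$), and then concludes directly. The cases $s=\infty$ and $s=t$ are dismissed as ``similar''. Your endpoint-plus-interpolation scheme is in fact the same device the paper uses in the proof of Lemma~\ref{comparelemma}, so you have essentially transplanted that structure here; the analytic core---the convolution estimate---is identical in both proofs. One small bonus of the Minkowski route is that it only requires $\sigma s>d$ for the inner integral to converge, whereas your $s=t$ endpoint needs the stronger $\sigma t>d$; but as you rightly observe, every use of the lemma in the paper has $\sigma>d/t$, and outside that regime the statement is vacuous anyway.
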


\begin{lemma}\label{comparelemma2}
Let  $0<t<\infty$,   $\sigma>d/t$, $0<\epsilon<\sigma -d/t$, and $k\in \mathbb{Z}$. Suppose $A>0$ and $f\in\mathcal{E}(A2^k)$. Then 
\begin{equation*}
\mathfrak{M}_{\sigma,2^k}^{t}f(x)\lesssim \mathcal{M}_t^{k,\epsilon}f(x).
\end{equation*}

\end{lemma}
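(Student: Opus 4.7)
The plan is to rewrite
\[
\mathfrak{M}_{\sigma,2^k}^{t}f(x)^t \;=\; 2^{kd}\int_{\mathbb{R}^d}\frac{|f(x-y)|^t}{(1+2^k|y|)^{\sigma t}}\,dy
\]
and then decompose $\mathbb{R}^d$ into the ball $B(0,2^{-k})$ together with the dyadic annuli $A_j:=\{y\in\mathbb{R}^d: 2^{j-k-1}<|y|\le 2^{j-k}\}$ for $j\ge 1$. On each piece the weight $(1+2^k|y|)^{-\sigma t}$ contributes a clean power of $2^{-j\sigma t}$, and what remains is an $L^t$ integral of $f$ over a cube/ball centered at $x$ of side comparable to $2^{j-k}$, which can be compared with the definition of $\mathcal{M}_t^{k,\epsilon}$ by distinguishing the two regimes $2^k l(Q)\le 1$ and $2^kl(Q)>1$.

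On the central ball the weight is bounded by $1$, so the contribution is at most $2^{kd}\int_{B(x,2^{-k})}|f(z)|^t\,dz$, which, after dividing and multiplying by $|B(x,2^{-k})|\approx 2^{-kd}$, is an $L^t$ average on a cube with $2^kl(Q)\approx 1$. This regime is controlled by the first supremum in the definition of $\mathcal{M}_t^{k,\epsilon}$ and produces at most a constant multiple of $\mathcal{M}_t^{k,\epsilon}f(x)^t$. On $A_j$, $j\ge 1$, we use $(1+2^k|y|)^{-\sigma t}\lesssim 2^{-j\sigma t}$ and $A_j\subset B(0,2^{j-k})$, so after translation the integral is dominated by $\int_{B(x,2^{j-k})}|f(z)|^t\,dz$. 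Since the cube containing $B(x,2^{j-k})$ satisfies $2^kl(Q)\approx 2^j>1$, the second supremum in the definition of $\mathcal{M}_t^{k,\epsilon}$ yields
\[
\frac{1}{|B(x,2^{j-k})|}\int_{B(x,2^{j-k})}|f(z)|^t\,dz \;\lesssim\; 2^{j\epsilon t}\,\mathcal{M}_t^{k,\epsilon}f(x)^t.
\]

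Multiplying out the $2^{kd}$, the weight, the volume $|B(x,2^{j-k})|\approx 2^{(j-k)d}$, and the bound above, the $j$-th annular contribution is of order $2^{j(d-\sigma t+\epsilon t)}\mathcal{M}_t^{k,\epsilon}f(x)^t$. The hypothesis $\epsilon<\sigma-d/t$ is exactly what makes the exponent $d-\sigma t+\epsilon t$ negative, so summing the geometric series in $j\ge 0$ gives $\mathfrak{M}_{\sigma,2^k}^{t}f(x)^t\lesssim \mathcal{M}_t^{k,\epsilon}f(x)^t$, and taking $t$-th roots finishes the proof. There is no serious obstacle here; the only nontrivial point is the bookkeeping of the two regimes of $\mathcal{M}_t^{k,\epsilon}$, and the sharpness of the hypothesis on $\epsilon$ enters only at the summation step.
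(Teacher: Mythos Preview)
Your proof is correct and follows essentially the same approach as the paper: decompose into the central ball $\{|y|\le 2^{-k}\}$ and dyadic annuli $\{2^{j-k-1}<|y|\le 2^{j-k}\}$, bound the weight by $2^{-j\sigma t}$ on each piece, compare the remaining integral with $\mathcal{M}_t^{k,\epsilon}f(x)^t$, and sum the resulting geometric series using $\epsilon<\sigma-d/t$. The paper's write-up is terser but the argument is identical.
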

The proofs of Lemma \ref{comparelemma}, \ref{mcomposition}, and \ref{comparelemma2} will be given in Appendix \ref{appendixb}.

Elementary considerations reveal that for $\sigma>0$ and $Q\in\mathcal{D}_k$
\begin{equation}\label{infmax}
 \sup_{y\in Q}{|f(y)|}\lesssim \inf_{y\in Q}{\mathfrak{M}_{\sigma,2^k}f(y)}
\end{equation}
and then it follows from Lemma \ref{mcomposition} that for $0<t\leq \infty$
\begin{equation}\label{infmax2}
\sup_{y\in Q}{\mathfrak{M}_{\sigma,2^k}^tf(y)}\lesssim \inf_{y\in Q}{\mathfrak{M}_{\sigma,2^k}\mathfrak{M}_{\sigma,2^k}^{t}f(y)}\lesssim \inf_{y\in Q}{\mathfrak{M}_{\sigma,2^k}^tf(y)}
\end{equation}  if $f\in \mathcal{E}(A2^k)$ for some $A>0$.

In addition, Lemma \ref{comparelemma2}, (\ref{hlmax}), and Lemma \ref{maximal1} lead immediately to the following maximal inequalities.
\begin{lemma}\label{maximal2}
Let $0<p,q\leq \infty$ and $\sigma>d/t> d/\min{(p,q)}$. Suppose $A>0$ and $f_k\in\mathcal{E}(A2^k)$ for each $k\in\mathbb{Z}$. 
\begin{enumerate}
\item For $0<p<\infty$ or $p=q=\infty$
\begin{equation*}
 \big\Vert \big\{ \mathfrak{M}_{\sigma,2^k}^tf_k\big\}_{k\in\mathbb{Z}}\big\Vert_{L^p(l^q)}\lesssim \big\Vert \{f_k\}_{k\in\mathbb{Z}}\big\Vert_{L^p(l^q)}.
\end{equation*}
\item For $p=\infty$, $0<q<\infty$, and $\mu\in\mathbb{Z}$
\begin{align*}
&\sup_{P\in\mathcal{D}_{\mu}}{\Big( \frac{1}{|P|}\int_P{\sum_{k=-\log_2{l(P)}}^{\infty}{\big(\mathfrak{M}_{\sigma,2^k}^tf_k(x) \big)^q}}dx\Big)^{1/q}}\\
&\lesssim \sup_{P\in\mathcal{D}_{\mu}}{\Big( \frac{1}{|P|}\int_P{\sum_{k=-\log_2{l(P)}}^{\infty}{|f_k(x) |^q}}dx\Big)^{1/q}}
\end{align*} where the constant in the inequality is independent of $\mu$.

\end{enumerate}

\end{lemma}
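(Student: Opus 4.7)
The plan is to reduce both statements to previously-established maximal inequalities via Lemma \ref{comparelemma2}. Fix $\epsilon$ with $0 < \epsilon < \sigma - d/t$ (possible since the hypothesis gives $\sigma > d/t$). Lemma \ref{comparelemma2} then provides the pointwise bound
\begin{equation*}
\mathfrak{M}_{\sigma,2^k}^{t} f_k(x) \lesssim \mathcal{M}_t^{k,\epsilon} f_k(x)
\end{equation*}
uniformly in $k \in \mathbb{Z}$, since each $f_k$ lies in $\mathcal{E}(A 2^k)$. With this pointwise reduction, the two parts are handled by two different maximal inequalities.

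For part (1), I would further observe that $\mathcal{M}_t^{k,\epsilon} f_k(x) \lesssim \mathcal{M}_t f_k(x)$, because in the definition of $\mathcal{M}_t^{k,\epsilon}$ the second supremum carries a normalizing factor $(2^k l(Q))^{-\epsilon} \le 1$ while the first supremum is dominated by the ordinary Hardy–Littlewood maximal $\mathcal{M}_t$. Then the Fefferman--Stein vector-valued inequality (\ref{hlmax}) applied with exponent $r = t$ yields
\begin{equation*}
\big\Vert \{ \mathcal{M}_t f_k \}_{k \in \mathbb{Z}} \big\Vert_{L^p(l^q)} \lesssim \big\Vert \{ f_k \}_{k \in \mathbb{Z}} \big\Vert_{L^p(l^q)},
\end{equation*}
which is valid because the hypothesis $d/t > d/\min(p,q)$ is equivalent to $t < \min(p,q)$. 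The degenerate case $p = q = \infty$ is immediate since $\mathcal{M}_t f_k(x) \lesssim \Vert f_k \Vert_{L^\infty}$ pointwise.

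For part (2), no further reduction is needed: applying Lemma \ref{maximal1} directly with $r = t$ and the chosen $\epsilon > 0$ produces exactly the claimed inequality for the $L^{\infty}(l^q)$-type Triebel--Lizorkin norm, and the uniformity of the implicit constant in $\mu$ is inherited verbatim from Lemma \ref{maximal1}. The hypothesis $0 < t < q < \infty$ required there follows from $d/t > d/\min(p,q) = d/q$ together with $q < \infty$.

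Since both parts are essentially formal consequences of already-established lemmas, there is no substantive obstacle; the only care needed is in verifying that the hypothesis $\sigma > d/t > d/\min(p,q)$ allows one to choose $\epsilon$ and $r = t$ satisfying simultaneously the hypotheses of Lemma \ref{comparelemma2}, (\ref{hlmax}), and Lemma \ref{maximal1}, which is exactly what the inequality $d/t > d/\min(p,q)$ guarantees.
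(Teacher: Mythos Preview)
Your proposal is correct and matches the paper's own argument essentially line for line: the paper simply states that Lemma \ref{comparelemma2}, (\ref{hlmax}), and Lemma \ref{maximal1} ``lead immediately'' to the result, and your write-up spells out exactly that reduction. The intermediate observation $\mathcal{M}_t^{k,\epsilon}f_k \lesssim \mathcal{M}_t f_k$ in part (1) is implicit in the paper's remark that $\mathcal{M}_r^{k,\epsilon}$ is decreasing in $\epsilon$ with $\mathcal{M}_r^{k,0}\approx \mathcal{M}_r$.
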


If $\min{(p,q)}\leq t\leq \infty$ and $\sigma>d/\min{(p,q)}$ then we may choose $0<t_0<\min{(p,q)}$ so that $\sigma>d/t_0>d/\min{(p,q)}$ and 
\begin{equation*}
\mathfrak{M}_{\sigma,2^k}^tf_k(x)\lesssim \mathfrak{M}_{\sigma,2^k}^{t_0}f_k(x)
\end{equation*}
for $f_k\in \mathcal{E}(A2^k)$. Then  as a consequence of Lemma \ref{maximal2} one obtains the following lemma.
\begin{lemma}\label{maximal3}
Let $0<p,q\leq \infty$, $\min{(p,q)}\leq t\leq \infty$, and $\sigma> d/\min{(p,q)}$. Suppose $A>0$ and $f_k\in\mathcal{E}(A2^k)$ for each $k\in\mathbb{Z}$. 
\begin{enumerate}
\item For $0<p<\infty$ or $p=q=\infty$
\begin{equation*}
 \big\Vert \big\{ \mathfrak{M}_{\sigma,2^k}^tf_k\big\}_{k\in\mathbb{Z}}\big\Vert_{L^p(l^q)}\lesssim \big\Vert \{f_k\}_{k\in\mathbb{Z}}\big\Vert_{L^p(l^q)}.
\end{equation*}
\item For $p=\infty$, $0<q<\infty$, and $\mu\in\mathbb{Z}$
\begin{align*}
&\sup_{P\in\mathcal{D}_{\mu}}{\Big( \frac{1}{|P|}\int_P{\sum_{k=-\log_2{l(P)}}^{\infty}{\big(\mathfrak{M}_{\sigma,2^k}^tf_k(x) \big)^q}}dx\Big)^{1/q}}\\
&\lesssim \sup_{P\in\mathcal{D}_{\mu}}{\Big( \frac{1}{|P|}\int_P{\sum_{k=-\log_2{l(P)}}^{\infty}{|f_k(x) |^q}}dx\Big)^{1/q}}
\end{align*} where the constant in the inequality is independent of $\mu$.

\end{enumerate}

\end{lemma}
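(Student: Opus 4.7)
The plan is to reduce Lemma \ref{maximal3} directly to Lemma \ref{maximal2} by a pointwise domination argument exploiting the strict inequality $\sigma > d/\min(p,q)$. Essentially the whole content of the lemma is that the restriction $d/t > d/\min(p,q)$ appearing in Lemma \ref{maximal2} can be relaxed to allow large $t$ (in particular $t \geq \min(p,q)$, including $t=\infty$), at the cost of nothing, because $\mathfrak{M}_{\sigma,2^k}^t$ is a \emph{decreasing} functional in $t$ when the input is band-limited.

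First I would pick an auxiliary exponent $t_0$ with $0<t_0<\min(p,q)$ chosen so that $\sigma>d/t_0>d/\min(p,q)$; such a $t_0$ exists because the hypothesis $\sigma>d/\min(p,q)$ is strict, making the interval $(d/\sigma,\min(p,q))$ nonempty. Since $t\geq \min(p,q)>t_0$, Lemma \ref{comparelemma} applied to $f_k\in\mathcal{E}(A2^k)$ yields the pointwise bound
\begin{equation*}
\mathfrak{M}_{\sigma,2^k}^{t}f_k(x)\lesssim \mathfrak{M}_{\sigma,2^k}^{t_0}f_k(x),
\end{equation*}
with implicit constant independent of $k$ and $x$. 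This is exactly the reduction alluded to in the paragraph preceding the statement.

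With this pointwise comparison in hand, both conclusions follow by invoking Lemma \ref{maximal2} with $t_0$ in place of $t$: part (1) is immediate from monotonicity of $\Vert\cdot\Vert_{L^p(l^q)}$ on nonnegative sequences, and part (2) is immediate from monotonicity of the supremum of normalized integrals, with the uniformity in $\mu$ inherited directly from Lemma \ref{maximal2}(2). There is no real obstacle here; the one thing to verify carefully is just that the interval $(d/\sigma,\min(p,q))$ is nonempty, which is nothing more than a restatement of the hypothesis $\sigma>d/\min(p,q)$.
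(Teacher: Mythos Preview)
Your proposal is correct and matches the paper's own argument essentially verbatim: the paper also selects $t_0$ with $0<t_0<\min(p,q)$ and $\sigma>d/t_0>d/\min(p,q)$, invokes Lemma~\ref{comparelemma} for the pointwise domination $\mathfrak{M}_{\sigma,2^k}^{t}f_k\lesssim\mathfrak{M}_{\sigma,2^k}^{t_0}f_k$, and then appeals to Lemma~\ref{maximal2}.
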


\subsection{Multiplier theorem for $L^p_A(l^q)$}

The next lemma states a vector-valued version of H\"ormander's multiplier theorem. It was partially proved by Triebel \cite[1.6.3, 2.4.9]{Tr} and was completed by the author \cite{Park5} recently.
\begin{customlemma}{E}\label{main}
Let $0<p,q\leq \infty$ and $\mu\in\mathbb{Z}$.
Suppose $f_k\in\mathcal{E}(A2^k)$ for each $k\in\mathbb{Z}$, and $\{m_k\}_{k\in\mathbb{Z}}$ satisfies
\begin{equation*}
\sup_{l\in\mathbb{Z}}{\big\Vert m_l(2^{l}\cdot)\big\Vert_{L_s^2}}<\infty \qquad \text{for }~ s>d/\min{(1,p,q)}-d/2.
\end{equation*}
\begin{enumerate}
\item For $0<p<\infty$ or $p=q=\infty$, 
\begin{equation*}
\big\Vert \big\{ \big( m_k \widehat{f_{k}}\big)^{\vee}\big\}_{k\in\mathbb{Z}}\big\Vert_{L^p(l^q)}\lesssim \sup_{l\in\mathbb{Z}}{\big\Vert m_l(2^{l}\cdot)\big\Vert_{L_s^2}} \big\Vert \big\{ f_k\big\}_{k\in\mathbb{Z}}\big\Vert_{L^p(l^q)}.
\end{equation*}
\item For $p=\infty$ and $0<q<\infty$
\begin{align*}
&\sup_{P\in\mathcal{D}, l(P)\leq 2^{-\mu}}{\Big(\frac{1}{|P|}\int_P{\sum_{k=-\log_2{l(P)}}^{\infty}{\big| \big(m_k\widehat{f_{k}} \big)^{\vee}(x)\big|^q}}dx \Big)^{1/q}}\\
&\lesssim  \sup_{l\geq \mu}{\big\Vert m_l(2^{l}\cdot)\big\Vert_{L_s^2}}\sup_{P\in\mathcal{D}, l(P)\leq 2^{-\mu}}{\Big(\frac{1}{|P|}\int_P{\sum_{k=-\log_2{l(P)}}^{\infty}{|f_{k}(x)|^q}}dx \Big)^{1/q}}
\end{align*} uniformly in $\mu$.
\end{enumerate}
\end{customlemma}

Observe that for $j\geq 0$
\begin{equation}\label{mkobserve}
\big( m_k\widehat{f_{k+j}}\big)^{\vee}(x)=\big(m_k(2^j\cdot)\big( f_{k+j}(2^{-j}\cdot)\big)^{\wedge} \big)^{\vee}(2^jx)
\end{equation}
and the following result can be verified with the use of a change of variables.
\begin{lemma}\label{propo}
Let $0<p,q\leq \infty$, $j\geq 0$, and $\mu\in\mathbb{Z}$.
Suppose $f_k\in\mathcal{E}(A2^k)$ for each $k\in\mathbb{Z}$, and $\{m_k\}_{k\in\mathbb{Z}}$ satisfies
\begin{equation*}
\sup_{l\in\mathbb{Z}}{\big\Vert m_l(2^{l+j}\cdot)\big\Vert_{L_s^2}}<\infty \qquad \text{for }~ s>d/\min{(1,p,q)}-d/2.
\end{equation*}
\begin{enumerate}
\item For $0<p<\infty$ or $p=q=\infty$, 
\begin{equation*}
\big\Vert \big\{ \big( m_k \widehat{f_{k+j}}\big)^{\vee}\big\}_{k\in\mathbb{Z}}\big\Vert_{L^p(l^q)}\lesssim \sup_{l\in\mathbb{Z}}{\big\Vert m_l(2^{l+j}\cdot)\big\Vert_{L_s^2}} \big\Vert \{f_k\}_{k\in\mathbb{Z}}\big\Vert_{L^p(l^q)}
\end{equation*}
uniformly in $j$.
\item For $p=\infty$ and $0<q<\infty$
\begin{align*}
&\sup_{P\in\mathcal{D}, l(P)\leq 2^{-\mu}}{\Big(\frac{1}{|P|}\int_P{\sum_{k=-\log_2{l(P)}}^{\infty}{\big| \big(m_k\widehat{f_{k+j}} \big)^{\vee}(x)\big|^q}}dx \Big)^{1/q}}\\
&\lesssim  \sup_{l\geq \mu}{\big\Vert m_l(2^{l+j}\cdot)\big\Vert_{L_s^2}}\sup_{P\in\mathcal{D}, l(P)\leq 2^{-\mu}}{\Big(\frac{1}{|P|}\int_P{\sum_{k=-\log_2{l(P)}}^{\infty}{|f_{k+j}(x)|^q}}dx \Big)^{1/q}}
\end{align*} uniformly in $\mu$ and $j$.
\end{enumerate}
\end{lemma}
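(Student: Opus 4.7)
The plan is to reduce Lemma \ref{propo} to Lemma \ref{main} via the rescaling identity \eqref{mkobserve}. Set
\[
M_k(\xi) := m_k(2^j \xi), \qquad F_k(y) := f_{k+j}(2^{-j}y).
\]
Since $f_{k+j}\in\mathcal{E}(A\,2^{k+j})$, one has $F_k\in\mathcal{E}(A\,2^k)$; moreover $\|M_l(2^l\cdot)\|_{L_s^2}=\|m_l(2^{l+j}\cdot)\|_{L_s^2}$, so the multiplier hypothesis of Lemma \ref{main} is satisfied by $\{M_k\}$ with constant equal to that of the present hypothesis. Identity \eqref{mkobserve} then reads $(m_k\widehat{f_{k+j}})^\vee(x)=(M_k\widehat{F_k})^\vee(2^j x)$.

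For part (1), I would apply Lemma \ref{main}(1) to $\{M_k\}$ and $\{F_k\}$, and then perform the substitution $y=2^j x$ on both sides. This introduces a factor $2^{-jd/p}$ in the $L^p(l^q)$-norm on the left (from the Jacobian $dx=2^{-jd}\,dy$) and a compensating factor $2^{+jd/p}$ in $\|\{F_k\}\|_{L^p(l^q)}$ (from $F_k(y)=f_{k+j}(2^{-j}y)$ together with the dilation of $L^p$). Combined with the shift-invariance of the $l^q(\mathbb{Z})$-norm, which converts $\{f_{k+j}\}$ back into $\{f_k\}$, these two factors cancel, yielding the desired bound uniformly in $j$ (the case $p=\infty$ being trivial).

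For part (2), I would apply the same substitution $y=2^j x$ to the Carleson-type sums on both sides. The bijection $P\mapsto\widetilde Q:=2^j P$ carries $\{P\in\mathcal{D}:l(P)\le 2^{-\mu}\}$ onto $\{\widetilde Q\in\mathcal{D}:l(\widetilde Q)\le 2^{j-\mu}\}$, replaces $\frac{1}{|P|}\int_P$ by $\frac{1}{|\widetilde Q|}\int_{\widetilde Q}$, and shifts the summation index from $k\ge -\log_2 l(P)$ to $k\ge -\log_2 l(\widetilde Q)+j$. Partition the supremum over $\widetilde Q$: on the sub-range $l(\widetilde Q)\le 2^{-\mu}$ the shifted sum is contained in the standard Carleson sum, so Lemma \ref{main}(2) at threshold $\mu$ applied to $\{M_k\},\{F_k\}$ delivers the bound directly; on the complementary range $2^{-\mu}<l(\widetilde Q)\le 2^{j-\mu}$, decompose $\widetilde Q$ into its dyadic sub-cubes $\widetilde Q^{\ast}$ of side $2^{-\mu}$ and use
\[
\frac{1}{|\widetilde Q|}\int_{\widetilde Q}g\,dy\le\max_{\widetilde Q^{\ast}\subset\widetilde Q}\frac{1}{|\widetilde Q^{\ast}|}\int_{\widetilde Q^{\ast}}g\,dy
\]
together with the fact that $-\log_2 l(\widetilde Q)+j\ge\mu=-\log_2 l(\widetilde Q^{\ast})$, which again places the shifted sum inside the standard Carleson sum over each $\widetilde Q^{\ast}$. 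Reversing the change of variables on the $F_k$-side via $F_k(2^j x)=f_{k+j}(x)$ then restores the claimed right-hand side in terms of $\{f_{k+j}\}$.

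The principal technical obstacle will be to verify that the implicit constants remain independent of both $\mu$ and $j$ throughout the dyadic decomposition and the substitutions; the delicate interplay among the threshold $\mu$, the shift $j$, and the cube scales $l(\widetilde Q)$ and $l(\widetilde Q^{\ast})$ must be tracked carefully, but the parallel transformation of the $f$-side under the same change of variables should ensure self-consistency.
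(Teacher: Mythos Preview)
Your treatment of part (1) is correct and coincides with the paper's proof.

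For part (2), however, your route is both more complicated than necessary and contains a genuine gap at the very last step. The paper does \emph{not} split into the two cube ranges $l(\widetilde Q)\le 2^{-\mu}$ and $2^{-\mu}<l(\widetilde Q)\le 2^{j-\mu}$. It simply carries the substitution $R=2^jP$ through symmetrically on both sides: under this bijection one has the exact identity $j-\log_2 l(R)=-\log_2 l(P)$, so the shifted Carleson sum $\sum_{k\ge j-\log_2 l(R)}$ over $R$ is \emph{equal} to the standard Carleson sum $\sum_{k\ge -\log_2 l(P)}$ over $P$. Lemma~\ref{main}(2) is then invoked once at the rescaled threshold, and the reverse substitution lands exactly on the stated right-hand side.

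Your argument, by contrast, first enlarges the shifted sum to the standard one and applies Lemma~\ref{main}(2) at threshold $\mu$, arriving at
\[
\sup_{S:\,l(S)\le 2^{-\mu}}\Big(\frac{1}{|S|}\int_S\sum_{k\ge -\log_2 l(S)}|F_k(y)|^q\,dy\Big)^{1/q}.
\]
But reversing the change of variables $F_k(y)=f_{k+j}(2^{-j}y)$ here does \emph{not} ``restore the claimed right-hand side'': setting $S=2^jP$ gives
\[
\sup_{P:\,l(P)\le 2^{-\mu-j}}\Big(\frac{1}{|P|}\int_P\sum_{k\ge -\log_2 l(P)-j}|f_{k+j}(x)|^q\,dx\Big)^{1/q},
\]
in which both the cube-size constraint and the summation threshold are displaced by $j$ relative to the stated right-hand side; this quantity is in general \emph{larger} than the stated right-hand side (take a single $f_{k_0}$ with $k_0=\mu+j$ and examine cubes $P$ of side $2^{-\mu-j}$ versus $2^{-\mu}$). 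The fix is precisely what the paper does: keep the offset $+j$ in the sum throughout rather than discarding it before applying Lemma~\ref{main}(2), so that the reverse substitution converts it back into the unshifted sum over $P$.
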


\begin{proof}

(1) By using (\ref{mkobserve}) and Lemma \ref{main} (1), one has
\begin{align*}
\big\Vert \big\{ \big( m_k \widehat{f_{k+j}}\big)^{\vee}\big\}_{k\in\mathbb{Z}}\big\Vert_{L^p(l^q)}&=2^{-jd/p}\big\Vert \big\{ \big( m_k(2^j\cdot)\big(f_{k+j}(2^{-j}\cdot) \big)^{\wedge}\big)^{\vee} \big\}_{k\in\mathbb{Z}}  \big\Vert_{L^p(l^q)}\\
&\lesssim \sup_{l\in\mathbb{Z}}{\big\Vert m_l(2^{l+j}\cdot)\big\Vert_{L_s^2}}2^{-jd/p}\big\Vert \big\{f_{k+j}(2^{-j}\cdot) \big\}_{k\in\mathbb{Z}}\big\Vert_{L^p(l^q)}\\
&=\sup_{l\in\mathbb{Z}}{\big\Vert m_l(2^{l+j}\cdot)\big\Vert_{L_s^2}} \big\Vert \{f_k\}_{k\in\mathbb{Z}}\big\Vert_{L^p(l^q)}
\end{align*} uniformly in $j$,  since $f_{k+j}(2^j\cdot)\in\mathcal{E}(A2^k)$.

(2) Similarly, (\ref{mkobserve}) and Lemma \ref{main} (2) yield that
\begin{align*}
&\sup_{P\in\mathcal{D}, l(P)\leq 2^{-\mu}}{\Big(\frac{1}{|P|}\int_P{\sum_{k=-\log_2{l(P)}}^{\infty}{\big| \big(m_k\widehat{f_{k+j}} \big)^{\vee}(x)\big|^q}}dx \Big)^{1/q}}\\
&=\sup_{R\in\mathcal{D},l(R)\leq 2^{-\mu+j}}{\Big(\frac{1}{|R|}\int_R{\sum_{k=-\log_2{l(R)}+j}^{\infty}{\big|    \big( m_k(2^j\cdot)\big(f_{k+j}(2^{-j}\cdot) \big)^{\wedge}\big)^{\vee}(x)      \big|^q}}dx \Big)^{1/q}}\\
&\lesssim \sup_{l\geq \mu}{\big\Vert m_l(2^{l+j})\big\Vert_{L_s^2}}\sup_{R\in\mathcal{D},l(R)\leq 2^{-\mu+j}}{\Big( \frac{1}{|R|}\int_R{\sum_{k=-\log_2{l(R)}}^{\infty}{\big|f_{k+j}(2^{-j}x) \big|^q}}dx\Big)^{1/q}}\\
&=\sup_{l\geq \mu}{\big\Vert m_l(2^{l+j}\cdot)\big\Vert_{L_s^2}}\sup_{P\in\mathcal{D}, l(P)\leq 2^{-\mu}}{\Big(\frac{1}{|P|}\int_P{\sum_{k=-\log_2{l(P)}}^{\infty}{|f_{k+j}(x)|^q}}dx \Big)^{1/q}}
\end{align*} uniformly in $j$.

\end{proof}

\section{Equivalence of (quasi-)norms by using $\mathfrak{M}^t_{\sigma,2^k}$ }\label{charactersection}

Let $f_k\in\mathcal{E}(A2^k)$ for some $A>0$ and $\ff:=\{f_k\}_{k\in\mathbb{Z}}$.
For convenience in notation we will occasionally write
\begin{equation*}
\mathfrak{M}_{\sigma}(\ff):=\big\{\mathfrak{M}_{\sigma,2^k}f_k \big\}_{k\in\mathbb{Z}}, \quad \mathfrak{M}_{\sigma}^t(\ff):=\big\{\mathfrak{M}_{\sigma,2^k}^tf_k \big\}_{k\in\mathbb{Z}}.
\end{equation*}
It follows from (\ref{infmax}) and Lemma \ref{comparelemma} that for $\sigma>0$ and $0<t\leq \infty$
\begin{equation*}
|f_k(x)|=\sum_{Q\in\mathcal{D}_k}{|f_k(x)|\chi_Q(x)}\lesssim \sum_{Q\in\mathcal{D}_k}{\Big( \inf_{y\in Q}{\mathfrak{M}_{\sigma,2^k}^tf_k(y)}\Big)\chi_Q(x)}\leq \mathfrak{M}_{\sigma,2^k}^tf_k(x).
\end{equation*}
Then Lemma \ref{maximal2} (1) gives the (quasi-)norm equivalence
\begin{equation}\label{character1}
\Vert \ff\Vert_{L^p(l^q)}\approx \Big\Vert \Big\{ \sum_{Q\in\mathcal{D}_k}{\Big( \inf_{y\in Q}{\mathfrak{M}_{\sigma,2^k}^tf_k(y)}\Big)\chi_Q}\Big\}_{k\in\mathbb{Z}}\Big\Vert_{L^p(l^q)}\approx \big\Vert \mathfrak{M}_{\sigma}^t\ff\big\Vert_{L^p(l^q)}
\end{equation} for  $\sigma>d/t>d/\min{(p,q)}$ if $0<p<\infty$ or $p=q=\infty$.
Similarly, if $0<q<\infty$ and $\mu\in\mathbb{Z}$, then
\begin{align}
&\sup_{P\in \mathcal{D},l(P)\leq 2^{-\mu}}{\Big(\frac{1}{|P|}\int_P{\sum_{k=-\log_2{l(P)}}^{\infty}{|f_k(x)|^q}}dx \Big)^{1/q}}\nonumber\\
&\approx \sup_{P\in \mathcal{D},l(P)\leq 2^{-\mu}}{\Big(\frac{1}{|P|}\int_P{\sum_{k=-\log_2{l(P)}}^{\infty}{\Big|  \sum_{Q\in\mathcal{D}_k}{\Big( \inf_{y\in Q}{\mathfrak{M}_{\sigma,2^k}^tf_k(y)}\Big)\chi_Q(x)}   \Big|^q}}dx \Big)^{1/q}} \label{qequiv2}\\
&=\sup_{P\in\mathcal{D}, l(P)\leq 2^{-\mu}}{\Big( \frac{1}{|P|}\sum_{k=-\log_2{l(P)}}^{\infty}{\sum_{Q\in\mathcal{D}_k,Q\subset P}{\Big(\inf_{y\in Q}{\mathfrak{M}_{\sigma,2^k}^tf_k(y)} \Big)^q|Q|}}\Big)^{1/q}} \label{qequiv}
\end{align} for  $\sigma>d/t>d/q$.

\begin{lemma}\label{character2}
Let $0<p,q\leq\infty$, $\sigma>d/t>d/\min{(p,q)}$, $0<\gamma<1$, and $\mu\in\mathbb{Z}$. 
For each $Q\in\mathcal{D}$ let $S_Q$ be a measurable subset of $Q$ with $|S_Q|>(1-\gamma) |Q|$.
Suppose $A>0$ and $f_k\in \mathcal{E}(A2^k)$ for each $k\in\mathbb{Z}$.
\begin{enumerate}
\item For $0<p<\infty$ or $p=q=\infty$
\begin{equation*}
\Vert \mathbf{f}\Vert_{L^p(l^q)}\approx_{\gamma} \Big\Vert \Big\{ \sum_{Q\in\mathcal{D}_k}{\Big( \inf_{y\in Q}{\mathfrak{M}_{\sigma,2^k}^tf_k(y)}\Big)\chi_{S_Q}}\Big\}_{k\in\mathbb{Z}}\Big\Vert_{L^p(l^q)}.
\end{equation*}
\item For $p=\infty$ and $0<q<\infty$
\begin{align*}
&\sup_{P\in \mathcal{D},l(P)\leq 2^{-\mu}}{\Big(\frac{1}{|P|}\int_P{\sum_{k=-\log_2{l(P)}}^{\infty}{|f_k(x)|^q}}dx \Big)^{1/q}}\\
&\approx_{\gamma}  \sup_{P\in \mathcal{D},l(P)\leq 2^{-\mu}}{\Big(\frac{1}{|P|}\int_P{\sum_{k=-\log_2{l(P)}}^{\infty}{\Big|  \sum_{Q\in\mathcal{D}_k}{\Big( \inf_{y\in Q}{\mathfrak{M}_{\sigma,2^k}^tf_k(y)}\Big)\chi_{S_Q}(x)}   \Big|^q}}dx \Big)^{1/q}}.
\end{align*}
\end{enumerate}
\end{lemma}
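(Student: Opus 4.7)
The plan is to use the equivalences already established in (\ref{character1}) and (\ref{qequiv2})--(\ref{qequiv}) to reduce the claim to comparing $\sum_{Q \in \mathcal{D}_k} a_{Q,k} \chi_Q$ with $\sum_{Q \in \mathcal{D}_k} a_{Q,k} \chi_{S_Q}$, where $a_{Q,k} := \inf_{y \in Q} \mathfrak{M}^t_{\sigma, 2^k} f_k(y)$. Since $\chi_{S_Q} \leq \chi_Q$, one direction of each equivalence is immediate. The work is in the reverse direction.

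For part (1), I would set $g_k := \sum_{Q \in \mathcal{D}_k} a_{Q,k} \chi_{S_Q}$ and fix $0 < r < \min(p, q)$ (any $r>0$ suffices when $p=q=\infty$). For $x \in Q_0 \in \mathcal{D}_k$, the pairwise disjointness of $\{S_Q\}_{Q \in \mathcal{D}_k}$ together with $|S_{Q_0}| > (1-\gamma)|Q_0|$ yields
$$\bigl(\mathcal{M}_r g_k(x)\bigr)^r \geq \frac{1}{|Q_0|} \int_{Q_0} g_k(y)^r\, dy = a_{Q_0, k}^r\, \frac{|S_{Q_0}|}{|Q_0|} > (1-\gamma)\, a_{Q_0, k}^r,$$
so that $\sum_{Q \in \mathcal{D}_k} a_{Q,k} \chi_Q \leq (1-\gamma)^{-1/r} \mathcal{M}_r g_k$ pointwise. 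Applying the Fefferman-Stein vector-valued maximal inequality (\ref{hlmax}) (or the trivial bound $\Vert \mathcal{M}_r g_k \Vert_{L^\infty} \leq \Vert g_k \Vert_{L^\infty}$ when $p=q=\infty$) and combining with (\ref{character1}) would close this direction.

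For part (2), the argument is purely measure-theoretic. The pairwise disjointness of $\{S_Q\}_{Q \in \mathcal{D}_k}$ and the observation that any $Q \in \mathcal{D}_k$ with $l(Q) \leq l(P)$ meeting $P$ is contained in $P$ yield
$$\int_P \Big| \sum_{Q \in \mathcal{D}_k} a_{Q,k} \chi_{S_Q}(x) \Big|^q dx = \sum_{Q \in \mathcal{D}_k,\, Q \subset P} a_{Q,k}^q\, |S_Q|.$$
The bounds $(1-\gamma)|Q| < |S_Q| \leq |Q|$ then give, after summation over $k \geq -\log_2 l(P)$ and supremum over $P$ with $l(P) \leq 2^{-\mu}$, an expression $\gamma$-equivalent to (\ref{qequiv}), which by (\ref{qequiv2}) matches the left-hand side of part (2).

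The only real subtlety is covering the quasi-Banach range $\min(p, q) \leq 1$ in part (1), where the standard Hardy-Littlewood maximal operator admits no Fefferman-Stein bound. This is circumvented by choosing $r < \min(p, q)$ so that (\ref{hlmax}) applies to $\mathcal{M}_r$, which is available thanks to the identity $(\mathcal{M}_r g_k)^r = \mathcal{M}(g_k^r)$ and the nonnegativity of $g_k$.
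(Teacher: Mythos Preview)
Your proof is correct and follows essentially the same strategy as the paper. For part (2) the arguments coincide: both reduce to (\ref{qequiv}) and use $(1-\gamma)|Q|<|S_Q|\le|Q|$. For part (1) there is a small implementation difference worth noting: rather than applying $\mathcal{M}_r$ to the aggregate function $g_k=\sum_{Q\in\mathcal{D}_k}a_{Q,k}\chi_{S_Q}$ as you do, the paper uses the pointwise bound $\chi_Q\lesssim \mathcal{M}_r(\chi_{S_Q})\chi_Q$ on each individual cube, then re-indexes the $L^p(l^q)$ norm by $Q\in\mathcal{D}$ (instead of $k\in\mathbb{Z}$) before invoking Fefferman--Stein. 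Your route---bounding $\sum_Q a_{Q,k}\chi_Q$ by $(1-\gamma)^{-1/r}\mathcal{M}_r g_k$ in one stroke and keeping the index set $\mathbb{Z}$---avoids the re-indexing step and is a clean, slightly more direct variant of the same idea.
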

Note that the constants in the estimates are independent of $S_Q$ as long as $|S_Q|>(1-\gamma) |Q|$.
\begin{proof}[Proof of Lemma \ref{character2}]
The second assertion follows immediately from (\ref{qequiv}) and the condition $|S_Q|>(1-\gamma) |Q|$.
Thus we only pursue the first one. Assume $0<p<\infty$ or $p=q=\infty$.
  Since $\chi_Q\geq \chi_{S_Q}$ one direction is obvious due to (\ref{character1}).
We will base the converse on the pointwise estimate that for $0<r<\infty$ 
\begin{equation}\label{inversechi}
\chi_{Q}(x)\lesssim \mathcal{M}_r(\chi_{S_Q})(x)\chi_Q(x),
\end{equation}
which is due to the observation that for $x\in Q$
\begin{equation*}
1<\frac{1}{(1-\gamma)^{1/r}}\frac{|S_Q|^{1/r}}{|Q|^{1/r}}=\frac{1}{(1-\gamma)^{1/r}}\Big(\frac{1}{|Q|}\int_Q{\chi_{S_Q}(y)}dy \Big)^{1/r}\leq (1-\gamma)^{-1/r}\mathcal{M}_r(\chi_{S_Q})(x).
\end{equation*} 

Choose $r<p,q$ and then apply (\ref{character1}) and (\ref{inversechi}) to obtain
\begin{align*}
\Vert \mathbf{f}\Vert_{L^p(l^q)}&\lesssim  \Big\Vert \Big\{ \sum_{Q\in\mathcal{D}_k}{\Big( \inf_{y\in Q}{\mathfrak{M}_{\sigma,2^k}^tf_k(y)\Big)\mathcal{M}_r(\chi_{S_Q})\chi_Q}}\Big\}_{k\in\mathbb{Z}}\Big\Vert_{L^p(l^q)}\\
&\leq \Big\Vert \Big\{ \Big(\inf_{y\in Q}{\mathfrak{M}_{\sigma,l(Q)^{-1}}^tf_{-\log_2{l(Q)}}(y)}\Big)\mathcal{M}_r(\chi_{S_Q})\Big\}_{Q\in\mathcal{D}}\Big\Vert_{L^p(l^q)} \\
&\lesssim  \Big\Vert \Big\{  \Big( \inf_{y\in Q}{\mathfrak{M}_{\sigma,l(Q)^{-1}}^tf_{-\log_2{l(Q)}}(y)}\Big) \chi_{S_Q}\Big\}_{Q\in\mathcal{D}}\Big\Vert_{L^p(l^q)}\\
&= \Big\Vert \Big\{ \sum_{Q\in\mathcal{D}_k}{\Big( \inf_{y\in Q}{\mathfrak{M}_{\sigma,2^k}^tf_k(y)}\Big) \chi_{S_Q}}\Big\}_{k\in\mathbb{Z}}\Big\Vert_{L^p(l^q)}
\end{align*} where  the maximal inequality (\ref{hlmax}) is applied in the third inequality (with a different countable index set $\mathcal{D}$). \qedhere
\end{proof}

\subsection{Proof of Theorem \ref{mainequiv}}

One direction follows immediately from Lemma \ref{character2} (2). 
Therefore, we need to prove that there exists a measurable subset $S_Q$ such that $|S_Q|>(1-\gamma)|Q|$ and 
\begin{align}\label{ourmaingoal}
&\Big\Vert \Big\{  \sum_{Q\in\mathcal{D}_k}{\Big( \inf_{y\in Q}{\mathfrak{M}_{\sigma,2^k}^tf_k(y)}\Big)\chi_{S_Q}}   \Big\}_{k\geq \mu} \Big\Vert_{L^{\infty}(l^q)}\nonumber\\
&\lesssim \sup_{P\in \mathcal{D},l(P)\leq 2^{-\mu}}{\Big(\frac{1}{|P|}\int_P{\sum_{k=-\log_2{l(P)}}^{\infty}{|f_k(x)|^q}}dx \Big)^{1/q}}.
\end{align}

To choose such a subset $S_Q$ we set up notation and terminology.
For $0<q\leq \infty$ and $P\in\mathcal{D}$
we define
\begin{equation*}
G^q_P(\mathbf{f})(x):=\Big\Vert \Big\{\sum_{Q\in\mathcal{D}_k, Q\subset P}{\Big( \inf_{y\in Q}{|f_k(y)|}\Big) \chi_Q(x)} \Big\}_{k\geq -\log_2{l(P)}}\Big\Vert_{l^{q}}.
\end{equation*} 
Recall that the nonincreasing rearrangement $f^*$ of a non-negative measurable function $f$ is given by 
\begin{equation*}
f^*(\gamma):=\inf{\big\{\lambda>0:\big| \{x\in\mathbb{R}^d:f(x)>\lambda\}\big|\leq \gamma \big\}}
\end{equation*} and satisfies
\begin{equation}\label{arrangeproperty}
\big| \big\{x\in\mathbb{R}^d:f(x)>f^*(\gamma) \big\}\big|\leq \gamma, \quad \gamma>0.
\end{equation}
For $P\in\mathcal{D}$, $0<\gamma<1$, and a non-negative measurable function $f$,  the ``$\gamma$-median of $f$ over $P$"  is defined as
\begin{equation*}
m_P^{\gamma}(f):=\inf{\big\{ \lambda>0:\big|\big\{x\in P: f(x)>\lambda \big\} \big|\leq\gamma |P|\big\}}.
\end{equation*} 
We consider the $\gamma$-median of $G_P^q(\mathbf{f})$ over $P$ and the supremums of the quantity over $P\in\mathcal{D}$, $l(P)\leq 2^{-\mu}$. 
That is,
\begin{equation*}
\mathbf{m}_P^{\gamma,q}(\mathbf{f}):=m_P^{\gamma}\big(G_P^q(\mathbf{f})\big)=\inf{\big\{ \lambda>0: \big| \big\{x\in P:G_P^q(\mathbf{f})(x)>\lambda\big\}\big|\leq \gamma {|P|} \big\}},
\end{equation*}
\begin{equation*}
\mathbf{m}^{\gamma,q,\mu}(\mathbf{f})(x):=\sup_{P\in\mathcal{D}, l(P)\leq 2^{-\mu}}{\mathbf{m}_P^{\gamma,q}(\mathbf{f})\chi_P(x)}.
\end{equation*}

Observe that 
\begin{equation*}
\mathbf{m}_P^{\gamma,q}(\mathbf{f})=\big( G_P^q(\mathbf{f})\chi_P\big)^*(\gamma |P|)
\end{equation*} and by (\ref{arrangeproperty}) one has
\begin{align}\label{arrangeproperty2}
&\big| \big\{x\in P:G_P^q(\mathbf{f})(x)>\mathbf{m}^{\gamma,q,-\log_2{l(P)}}(\mathbf{f})(x) \big\}\big|\nonumber\\
&\leq \big| \big\{x\in P:G_P^q(\mathbf{f})(x)>\mathbf{m}_P^{\gamma,q}(\mathbf{f}) \big\}\big|\leq \gamma|P|. 
\end{align}
Moreover, 
\begin{equation}\label{membedding}
\mathbf{m}^{\gamma,q,\mu_1}(\mathbf{f})(x)\leq \mathbf{m}^{\gamma,q,\mu_2}(\mathbf{f})(x) \quad \text{for}\quad \mu_1\geq \mu_2.
\end{equation}

Now for each $P\in \mathcal{D}$ we define
\begin{equation*}
S_P^{\gamma,q}(\mathbf{f}):=\big\{ x\in P: G_P^q(\mathbf{f})(x)\leq \mathbf{m}^{\gamma,q,-\log_2{l(P)}}\big(\mathbf{f}\big)(x) \big\}.
\end{equation*}
Then (\ref{arrangeproperty2}) yields that
\begin{equation}\label{claim1}
\big|S_P^{\gamma,q}(\mathbf{f})\big|\geq (1-\gamma)|P|
\end{equation} 
and (\ref{ourmaingoal}) can be deduced in the following proposition.
\begin{proposition}\label{character4}
Let $0<q<\infty$, $\sigma>d/t>d/q$, $0<\gamma<1$, and $\mu\in\mathbb{Z}$. Suppose $A>0$ and $f_k\in\mathcal{E}(A2^k)$ for each $k\in\mathbb{Z}$. 
Then
\begin{align*}
 &\Big\Vert \Big\{ \sum_{Q\in\mathcal{D}_k}{\Big(\inf_{y\in Q}{\mathfrak{M}_{\sigma,2^k}^tf_k(y)}\Big)\chi_{S_Q^{\gamma,q}(\mathfrak{M}_{\sigma}^t(\ff)) }}\Big\}_{k\geq \mu}\Big\Vert_{L^{\infty}(l^q)}\\
 &\lesssim \sup_{P\in\mathcal{D},l(P)\leq 2^{-\mu}}{\Big(\frac{1}{|P|}\int_P{\sum_{k=-\log_2{l(P)}}^{\infty}{{|f_k(x)|^q}}}dx \Big)^{1/q}}
\end{align*} uniformly in $\mu$.
\end{proposition}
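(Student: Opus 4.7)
The plan is to evaluate the $L^\infty(l^q)$ norm pointwise, exploit the nested structure of dyadic cubes so that the sum in $k$ telescopes into a single $G^q_{P}$ quantity, then feed that into the defining inequality of $S_Q^{\gamma,q}(\cdot)$, and finally control the resulting $\gamma$-median by the classical Chebyshev-type inequality together with Lemma \ref{maximal2} (2).

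First I would fix $x\in\mathbb{R}^d$ and, for each $k\geq\mu$, let $Q_k(x)$ denote the unique cube of $\mathcal{D}_k$ containing $x$. The quantity inside the $L^\infty$ norm at $x$ is then
\begin{equation*}
\Big(\sum_{k\geq\mu}\Big(\inf_{y\in Q_k(x)}\mathfrak{M}_{\sigma,2^k}^t f_k(y)\Big)^q\chi_{S_{Q_k(x)}^{\gamma,q}(\mathfrak{M}_\sigma^t(\ff))}(x)\Big)^{1/q}.
\end{equation*}
Let $K(x):=\{k\geq\mu:x\in S_{Q_k(x)}^{\gamma,q}(\mathfrak{M}_\sigma^t(\ff))\}$; if $K(x)=\emptyset$ the quantity is $0$ and there is nothing to show. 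Otherwise set $k_0:=k_0(x):=\min K(x)$. Since $Q_l(x)\subset Q_{k_0}(x)$ for every $l\geq k_0$, unraveling the definition of $G^q_{Q_{k_0}(x)}$ gives
\begin{equation*}
G_{Q_{k_0}(x)}^q(\mathfrak{M}_\sigma^t(\ff))(x)^q=\sum_{l\geq k_0}\Big(\inf_{y\in Q_l(x)}\mathfrak{M}_{\sigma,2^l}^t f_l(y)\Big)^q,
\end{equation*}
so the sum over $k\in K(x)\subset\{k\geq k_0\}$ is dominated by $G_{Q_{k_0}(x)}^q(\mathfrak{M}_\sigma^t(\ff))(x)^q$. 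Because $x\in S_{Q_{k_0}(x)}^{\gamma,q}(\mathfrak{M}_\sigma^t(\ff))$, the defining inequality yields
\begin{equation*}
G_{Q_{k_0}(x)}^q(\mathfrak{M}_\sigma^t(\ff))(x)\leq \mathbf{m}^{\gamma,q,k_0}(\mathfrak{M}_\sigma^t(\ff))(x)\leq \mathbf{m}^{\gamma,q,\mu}(\mathfrak{M}_\sigma^t(\ff))(x),
\end{equation*}
where the last step uses the monotonicity \eqref{membedding}.

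It therefore suffices to prove the $L^\infty$ bound
\begin{equation*}
\big\Vert \mathbf{m}^{\gamma,q,\mu}(\mathfrak{M}_\sigma^t(\ff))\big\Vert_{L^\infty}\lesssim_\gamma \sup_{P\in\mathcal{D},l(P)\leq 2^{-\mu}}\Big(\frac{1}{|P|}\int_P\sum_{k=-\log_2 l(P)}^{\infty}|f_k(x)|^q\,dx\Big)^{1/q}.
\end{equation*}
Fix any $P\in\mathcal{D}$ with $l(P)\leq 2^{-\mu}$. A standard Chebyshev-type argument for the $\gamma$-median, using that $\lambda<\mathbf{m}_P^{\gamma,q}(\mathbf{g})$ implies $|\{G_P^q(\mathbf{g})>\lambda\}\cap P|>\gamma|P|$, yields
\begin{equation*}
\gamma\,\mathbf{m}_P^{\gamma,q}(\mathfrak{M}_\sigma^t(\ff))^q\leq \frac{1}{|P|}\int_P G_P^q(\mathfrak{M}_\sigma^t(\ff))(x)^q\,dx.
\end{equation*}
Then the trivial bound $\sum_{Q\in\mathcal{D}_k}(\inf_{y\in Q}\mathfrak{M}_{\sigma,2^k}^t f_k(y))\chi_Q(x)\leq \mathfrak{M}_{\sigma,2^k}^t f_k(x)$ gives
\begin{equation*}
\frac{1}{|P|}\int_P G_P^q(\mathfrak{M}_\sigma^t(\ff))(x)^q\,dx\leq \frac{1}{|P|}\int_P\sum_{k=-\log_2 l(P)}^{\infty}\big(\mathfrak{M}_{\sigma,2^k}^t f_k(x)\big)^q\,dx,
\end{equation*}
and Lemma \ref{maximal2} (2) (applied to the supremum over $P\in\mathcal{D}$ with $l(P)\leq 2^{-\mu}$, which is the union of the suprema over $\mathcal{D}_\nu$ for $\nu\geq\mu$) controls the right-hand side by the desired RHS of the proposition.

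The main technical point I expect to be delicate is the first step, namely identifying $k_0(x)$ and using the monotonicity of the $\gamma$-median in $\mu$ to collapse the whole $k$-sum to a single cube $Q_{k_0(x)}(x)$; this is where the explicit choice $S_Q^{\gamma,q}(\cdot)$ pays off, since its definition is tailored precisely to let $G_{Q_{k_0}(x)}^q$ dominate the partial sums above $k_0$. The remaining steps (Chebyshev for the median, maximal control, and Lemma \ref{maximal2} (2)) are routine once this pointwise reduction is in place, and the uniformity in $\mu$ is preserved throughout.
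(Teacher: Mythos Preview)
Your proof is correct and follows essentially the same approach as the paper's: the pointwise reduction via the minimal index $k_0(x)$ (equivalently, the maximal cube $P_0(x)$ in the paper's language) together with the monotonicity \eqref{membedding} is exactly Claim~1 there, and your Chebyshev bound on the $\gamma$-median combined with Lemma~\ref{maximal2}(2) is Claim~2. Your presentation of Claim~2 is in fact a bit cleaner, since the paper passes through an auxiliary $\epsilon$ and a rescaled constant $C_{A,q,t,\sigma}\gamma$ before renormalizing, whereas you obtain the inequality $\gamma\,\mathbf{m}_P^{\gamma,q}(\cdot)^q\le |P|^{-1}\int_P G_P^q(\cdot)^q$ directly.
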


\begin{remark}
For $0<q<\infty$
\begin{equation*}
\Vert \ff\Vert_{L^{\infty}(l^q)}\not\approx \Big\Vert \Big\{ \sum_{Q\in\mathcal{D}_k}{\Big(\inf_{y\in Q}{ \mathfrak{M}_{\sigma,2^k}^tf_k(y)}\Big)\chi_{S_Q^{\gamma,q}(\mathfrak{M}_{\sigma}^t(\ff)) }}\Big\}_{k\in\mathbb{Z}}\Big\Vert_{L^{\infty}(l^q)}
\end{equation*} 
while for $0<p<\infty$ or $p=q=\infty$
\begin{equation*}
\Vert \ff\Vert_{L^p(l^q)}\approx \Big\Vert \Big\{ \sum_{Q\in\mathcal{D}_k}{\Big(\inf_{y\in Q}{ \mathfrak{M}_{\sigma,2^k}^tf_k(y)}\Big)\chi_{S_Q^{\gamma,q}(\mathfrak{M}_{\sigma}^t(\ff)) }}\Big\}_{k\in\mathbb{Z}}\Big\Vert_{L^p(l^q)},
\end{equation*}  which is due to Lemma \ref{character2} (1).
\end{remark}

\begin{proof}[Proof of Proposition \ref{character4}]
Assume $0<q<\infty$, $\sigma>d/t>d/q$, and $\mu\in\mathbb{Z}$.
Our claim is
\begin{align}
&\Big\Vert \Big\{ \sum_{Q\in\mathcal{D}_k}{\Big( \inf_{y\in Q}{\mathfrak{M}_{\sigma,2^k}^tf_k(y)}\Big) \chi_{S_Q^{\gamma,q}(\mathfrak{M}_{\sigma}^t(\ff))}}\Big\}_{k\geq \mu}      \Big\Vert_{L^{\infty}(l^q)}\nonumber\\
&= \sup_{P\in\mathcal{D}, l(P)\leq 2^{-\mu}} \Big\Vert \Big\{ \sum_{Q\in\mathcal{D}_k,Q\subset P}{\Big(  \inf_{y\in Q}{ \mathfrak{M}_{\sigma,2^k}^tf_k(y)}\Big)\chi_{S_Q^{\gamma,q}(\mathfrak{M}_{\sigma}^t(\ff)) }}\Big\}_{k\geq \mu}\Big\Vert_{L^{\infty}(l^q)}\nonumber\\
&\leq \big\Vert \mathbf{m}^{\gamma,q,\mu}\big(\mathfrak{M}_{\sigma}^t(\ff) \big) \big\Vert_{L^{\infty}}\label{mclaim1}\tag{Claim 1}\\
&\lesssim  \sup_{P\in\mathcal{D},l(P)\leq 2^{-\mu}}{\Big(\frac{1}{|P|}\int_P{\sum_{k=-\log_2{l(P)}}^{\infty}{{|f_k(x)|^q}}}dx \Big)^{1/q}}\label{mclaim2}\tag{Claim 2}.    
\end{align}

To verify (\ref{mclaim1}) let $\nu\geq \mu$ and fix $P\in\mathcal{D}_{\nu}$ (i.e. $l(P)=2^{-\nu}\leq 2^{-\mu}$). Suppose $x\in P$.
Then it suffices to show that
\begin{equation}\label{claim22}
\Big\Vert \Big\{\sum_{Q\in\mathcal{D}_k,Q\subset P}{\Big(\inf_{y\in Q}{\mathfrak{M}_{\sigma,2^k}^tf_k(y)}\Big)\chi_{S_Q^{\gamma,q}(\mathfrak{M}_{\sigma}^t(\mathbf{f}))}(x)}\Big\}_{k\geq \mu}\Big\Vert_{l^q}\leq \mathbf{m}^{\gamma,q,\nu}\big(\mathfrak{M}_{\sigma}^t(\mathbf{f})\big)(x)
\end{equation}
due to (\ref{membedding}).
Suppose that the left hand side of (\ref{claim22}) is a nonzero number. Then there exists the ``maximal" dyadic cube $P_0(x)\subset P$ such that $x\in S_{P_0(x)}^{\gamma,q}(\mathfrak{M}_{\sigma}^t(\mathbf{f}))$, and thus 
\begin{equation}\label{gpqx}
G_{P_0(x)}^q\big(\mathfrak{M}_{\sigma}^t(\mathbf{f})\big)(x)\leq \mathbf{m}^{\gamma,q,-\log_2{l(P_0(x))}}\big(\mathfrak{M}_{\sigma}^t(\mathbf{f})\big)(x)\leq \mathbf{m}^{\gamma,q,\nu}\big(\mathfrak{M}_{\sigma}^t(\mathbf{f})\big)(x)
\end{equation} where the second inequality follows from (\ref{membedding}).
The maximality of $P_0(x)$ yields that the left hand side of (\ref{claim22}) is
\begin{align*}
&\Big\Vert \Big\{\sum_{Q\in\mathcal{D}_k, Q\subset P_0(x)}{\Big(\inf_{y\in Q}{\mathfrak{M}_{\sigma,2^k}^tf_k(y)}\Big)\chi_{S_Q^{\gamma,q}(\mathfrak{M}_{\sigma}^t(\mathbf{f}))}(x)}\Big\}_{k\geq -\log_2{l(P_0(x))}}  \Big\Vert_{l^q}\\
&\leq \Big\Vert \Big\{\sum_{Q\in\mathcal{D}_k, Q\subset P_0(x)}{\Big(\inf_{y\in Q}{\mathfrak{M}_{\sigma,2^k}^{t}f_k(y)}\Big)\chi_Q(x)}\Big\}_{k\geq -\log_2{l(P_0(x))}}  \Big\Vert_{l^q}\\
&=G_{P_0(x)}^q\big(\mathfrak{M}_{\sigma}^t(\mathbf{f})\big)(x)\leq \mathbf{m}^{\gamma,q,\nu}\big(\mathfrak{M}_{\sigma}^t(\mathbf{f})\big)(x),
\end{align*} where the last one follows from (\ref{gpqx}). This proves (\ref{claim22}).

We now prove (\ref{mclaim2}) for ``any $0<\gamma<\infty$".
Fix $\nu\geq \mu$ and let us assume 
\begin{equation}\label{givenest}
\epsilon>\gamma^{-1/q} \sup_{P\in\mathcal{D}, l(P)\leq 2^{-\nu}}{\Big( \frac{1}{|P|}\int_P{\sum_{k=-\log_2{l(P)}}^{\infty}{|f_k(x)|^q}}dx\Big)^{1/q}}.
\end{equation} Then,  using Chebyshev's inequality, (\ref{qequiv2}), and (\ref{givenest}),  there exists a constant $C_{A,q,t,\sigma}>0$ such that for $R\in \mathcal{D}_{\nu}$
\begin{align*}
&\big| \big\{x\in R:G_R^q\big(\mathfrak{M}_{\sigma}^t(\mathbf{f})\big)(x)>\epsilon \big\}\big|\leq  \frac{1}{\epsilon^q}\big\Vert G_R^q\big( \mathfrak{M}_{\sigma}^t(\mathbf{f})\big)\big\Vert_{L^q}^q\\
&=\frac{1}{\epsilon^q}\int_{R}{\sum_{k=-\log_2{l(R)}}^{\infty}{\Big(\sum_{Q\in\mathcal{D}_k}{\Big(\inf_{y\in Q}{\mathfrak{M}_{\sigma,2^k}^tf_k(y)} \Big)\chi_Q(x)} \Big)^q}}dx\\
&\leq C_{A,q,t,\sigma} \frac{|R|}{\epsilon^q}    \sup_{P\in\mathcal{D}, l(P)\leq 2^{-\nu}}{\Big( \frac{1}{|P|}\int_P{\sum_{k=-\log_2{l(P)}}^{\infty}{|f_k(x)|^q}}dx\Big)^{1/q}}    \\
&\leq C_{A,q,t,\sigma}\gamma |R|.
\end{align*}
This yields that
\begin{equation*}
\mathbf{m}_R^{C_{A,q,t,\sigma}\gamma,q}\big(\mathfrak{M}_{\sigma}^t(\mathbf{f})\big)\leq \epsilon <2\epsilon.
\end{equation*}
So far, we have proved that for any $R\in\mathcal{D}_{\nu}$,
\begin{equation*}
\mathbf{m}_R^{C_{A,q,t,\sigma}\gamma,q}\big(\mathfrak{M}_{\sigma}^t(\mathbf{f})\big)\leq 2\gamma^{-1/q}\sup_{P\in\mathcal{D}, l(P)\leq 2^{-\nu}}{\Big( \frac{1}{|P|}\int_P{\sum_{k=-\log_2{l(P)}}^{\infty}{|f_k(x)|^q}}dx\Big)^{1/q}},
\end{equation*}
which is equivalent to
\begin{equation*}
\mathbf{m}_R^{\gamma,q}\big(\mathfrak{M}_{\sigma}^t(\mathbf{f})\big)\leq 2C_{A,q,t,\sigma}^{1/q}\gamma^{-1/q}\sup_{P\in\mathcal{D}, l(P)\leq 2^{-\nu}}{\Big( \frac{1}{|P|}\int_P{\sum_{k=-\log_2{l(P)}}^{\infty}{|f_k(x)|^q}}dx\Big)^{1/q}}.
\end{equation*}
We complete the proof by taking the supremum over $R\in\mathcal{D}, l(R)=2^{-\nu}\leq 2^{-\mu}$. \qedhere
\end{proof}

We end this section by pointing out  that the replacement of Lemma \ref{maximal2} in the above arguments by Lemma \ref{maximal3} provides the following corollaries.
\begin{corollary}\label{cor1}
Let $0<p,q\leq\infty$, $\min{(p,q)}\leq t\leq \infty$, $\sigma>d/\min{(p,q)}$, $0<\gamma< 1$, and $\mu\in\mathbb{Z}$. 
For each $Q\in\mathcal{D}$ let $S_Q$ be a measurable subset of $Q$ with $|S_Q|>(1-\gamma) |Q|$.
Suppose $A>0$ and $f_k\in \mathcal{E}(A2^k)$ for each $k\in\mathbb{Z}$.
\begin{enumerate}
\item For $0<p<\infty$ or $p=q=\infty$
\begin{align*}
\Vert \{f_k \}_{k\in\mathbb{Z}}\Vert_{L^p(l^q)}&\approx \Big\Vert \Big\{ \sum_{Q\in\mathcal{D}_k}{\Big( \inf_{y\in Q}{\mathfrak{M}_{\sigma,2^k}^tf_k(y)}\Big)\chi_{Q}}\Big\}_{k\in\mathbb{Z}}\Big\Vert_{L^p(l^q)}\\
 &\approx_{\gamma} \Big\Vert \Big\{ \sum_{Q\in\mathcal{D}_k}{\Big( \inf_{y\in Q}{\mathfrak{M}_{\sigma,2^k}^tf_k(y)}\Big)\chi_{S_Q}}\Big\}_{k\in\mathbb{Z}}\Big\Vert_{L^p(l^q)}.
\end{align*}
\item For $p=\infty$ and $0<q<\infty$
\begin{align*}
&\sup_{P\in \mathcal{D},l(P)\leq 2^{-\mu}}{\Big(\frac{1}{|P|}\int_P{\sum_{k=-\log_2{l(P)}}^{\infty}{|f_k(x)|^q}}dx \Big)^{1/q}}\\
&\approx  \sup_{P\in \mathcal{D},l(P)\leq 2^{-\mu}}{\Big(\frac{1}{|P|}\int_P{\sum_{k=-\log_2{l(P)}}^{\infty}{\Big|  \sum_{Q\in\mathcal{D}_k}{\Big( \inf_{y\in Q}{\mathfrak{M}_{\sigma,2^k}^tf_k(y)}\Big)\chi_{Q}(x)}   \Big|^q}}dx \Big)^{1/q}}\\
&\approx  \sup_{P\in \mathcal{D},l(P)\leq 2^{-\mu}}{\Big(\frac{1}{|P|}\int_P{\sum_{k=-\log_2{l(P)}}^{\infty}{\Big|  \sum_{Q\in\mathcal{D}_k}{\Big( \inf_{y\in Q}{\mathfrak{M}_{\sigma,2^k}^tf_k(y)}\Big)\chi_{S_Q}(x)}   \Big|^q}}dx \Big)^{1/q}}.
\end{align*}
\end{enumerate}
\end{corollary}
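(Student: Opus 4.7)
The plan is to mimic the proofs of (\ref{character1})--(\ref{qequiv}) and Lemma \ref{character2} verbatim, substituting Lemma \ref{maximal3} for Lemma \ref{maximal2} at every invocation of a vector-valued maximal inequality for $\mathfrak{M}^t_{\sigma,2^k}$. This substitution is legitimate because the two maximal estimates in Lemma \ref{maximal3} have exactly the same form as those in Lemma \ref{maximal2}, differing only in the weakened hypothesis $\min(p,q)\le t\le\infty$ with $\sigma>d/\min(p,q)$. The reduction mechanism is already supplied in the excerpt: pick $0<t_0<\min(p,q)$ with $\sigma>d/t_0>d/\min(p,q)$, use Lemma \ref{comparelemma} to obtain $\mathfrak{M}^t_{\sigma,2^k}f_k(x)\lesssim \mathfrak{M}^{t_0}_{\sigma,2^k}f_k(x)$ for $f_k\in\mathcal{E}(A2^k)$, and invoke Lemma \ref{maximal2} with exponent $t_0$.

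For part (1), the unconditional pointwise chain
\[
|f_k(x)|\lesssim \sum_{Q\in\mathcal{D}_k}\Big(\inf_{y\in Q}\mathfrak{M}^t_{\sigma,2^k}f_k(y)\Big)\chi_Q(x)\le \mathfrak{M}^t_{\sigma,2^k}f_k(x),
\]
valid for all $\sigma>0$ and $0<t\le\infty$ by (\ref{infmax}), Lemma \ref{comparelemma}, and the trivial estimate, sandwiches the three expressions: taking $L^p(l^q)$ quasi-norms gives the outer bounds, while Lemma \ref{maximal3}(1) supplies the converse $\Vert \mathfrak{M}_\sigma^t(\ff)\Vert_{L^p(l^q)}\lesssim \Vert\ff\Vert_{L^p(l^q)}$. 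The $\chi_{S_Q}$ version then follows by copying the proof of Lemma \ref{character2}(1) verbatim: the inequality $\chi_{S_Q}\le\chi_Q$ handles one direction, and the converse uses the pointwise bound $\chi_Q\lesssim_\gamma \mathcal{M}_r(\chi_{S_Q})\chi_Q$ from (\ref{inversechi}) for some $0<r<\min(p,q)$ together with Fefferman--Stein (\ref{hlmax}).

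For part (2), the first-to-second line equivalence combines the same pointwise chain with Lemma \ref{maximal3}(2), which dominates $\sup_P\big(|P|^{-1}\int_P\sum_k(\mathfrak{M}^t_{\sigma,2^k}f_k)^q\,dx\big)^{1/q}$ by the corresponding expression in $|f_k|$. The second-to-third line equivalence is then purely algebraic: since $\chi_Q$ and $\chi_{S_Q}$ are supported in $Q\subset P$, the identity used in (\ref{qequiv}) reduces the two quantities to
\[
\frac{1}{|P|}\sum_k\sum_{Q\in\mathcal{D}_k,\,Q\subset P}\Big(\inf_{y\in Q}\mathfrak{M}^t_{\sigma,2^k}f_k(y)\Big)^q|Q|\quad\text{and}\quad \frac{1}{|P|}\sum_k\sum_{Q\in\mathcal{D}_k,\,Q\subset P}\Big(\inf_{y\in Q}\mathfrak{M}^t_{\sigma,2^k}f_k(y)\Big)^q|S_Q|
\]
respectively, which agree up to the factor between $1-\gamma$ and $1$ forced by $(1-\gamma)|Q|<|S_Q|\le|Q|$. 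No substantive obstacle arises; the whole argument is bookkeeping that reuses Lemma \ref{maximal3} wherever the proofs of Lemma \ref{character2} and the equivalences in Section \ref{charactersection} appealed to Lemma \ref{maximal2}.
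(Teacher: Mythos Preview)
Your proposal is correct and follows exactly the approach indicated by the paper, which simply states that Corollary~\ref{cor1} is obtained by replacing Lemma~\ref{maximal2} with Lemma~\ref{maximal3} in the arguments leading to (\ref{character1})--(\ref{qequiv}) and Lemma~\ref{character2}. Your write-up fleshes out the bookkeeping (the pointwise sandwich, the $\chi_Q\lesssim\mathcal{M}_r(\chi_{S_Q})\chi_Q$ trick, and the algebraic reduction via (\ref{qequiv})) faithfully and without gaps.
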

\begin{corollary}\label{cor2}
Let $0<q<\infty$, $q\leq t\leq \infty$, $\sigma>d/q$, $0<\gamma<1$, and $\mu\in\mathbb{Z}$. Suppose $A>0$ and $f_k\in \mathcal{E}(A2^k)$ for each $k\in \mathbb{Z}$. For  $Q\in\mathcal{D}$ there exists a proper measurable subset $S_Q$ of $Q$, depending on $\gamma,q,\sigma,t,\{f_k\}_{k\in\mathbb{Z}}$, such that $|S_Q|>(1-\gamma)|Q|$ and
\begin{align*}
&\sup_{P\in \mathcal{D},l(P)\leq 2^{-\mu}}{\Big(\frac{1}{|P|}\int_P{\sum_{k=-\log_2{l(P)}}^{\infty}{|f_k(x)|^q}}dx \Big)^{1/q}}\\
&\relphantom{=} \approx \Big\Vert \Big\{  \sum_{Q\in\mathcal{D}_k}{\Big( \inf_{y\in Q}{\mathfrak{M}_{\sigma,2^k}^tf_k(y)}\Big)\chi_{S_Q}}   \Big\}_{k\geq \mu} \Big\Vert_{L^{\infty}(l^q)}, \quad \text{uniformly in }~\mu.
\end{align*}
\end{corollary}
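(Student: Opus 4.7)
The plan is to mirror the proof of Theorem \ref{mainequiv} verbatim, substituting Lemma \ref{maximal3} for Lemma \ref{maximal2} at every step where a maximal inequality is invoked. The left-hand side implicitly sets $p=\infty$, so $\min(p,q)=q$, and the present hypotheses $q\leq t\leq\infty$, $\sigma>d/q$ are precisely the input required by Lemma \ref{maximal3} (2). The author's own remark preceding Corollary \ref{cor1} (that ``the replacement of Lemma \ref{maximal2} in the above arguments by Lemma \ref{maximal3} provides the following corollaries'') already asserts that the proof is a mechanical substitution.

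For the $\gtrsim$ direction, Corollary \ref{cor1} (2) already supplies the analogue of (\ref{qequiv2}) under the present weaker hypothesis: the Carleson-type supremum on the left-hand side is comparable to the same quantity with $f_k$ replaced by $\sum_{Q\in\mathcal{D}_k}(\inf_{y\in Q}\mathfrak{M}_{\sigma,2^k}^tf_k(y))\chi_{S_Q}$ for any measurable $S_Q\subset Q$ with $|S_Q|>(1-\gamma)|Q|$. Since a Carleson-type supremum of averages is dominated by the corresponding $L^{\infty}(l^q)$ norm of its argument, this direction follows immediately once $S_Q$ is specified.

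For the converse $\lesssim$ direction, I would take
\[
S_Q := S_Q^{\gamma,q}(\mathfrak{M}_\sigma^t(\mathbf{f}))
\]
exactly as in the proof of Proposition \ref{character4}, which gives $|S_Q|\geq(1-\gamma)|Q|$ by (\ref{claim1}). The two-step argument of that proposition then transfers without modification: Claim 1, which is a pointwise bound by the $\gamma$-median $\mathbf{m}^{\gamma,q,\mu}(\mathfrak{M}_\sigma^t(\mathbf{f}))$ extracted from the maximality of a dyadic ancestor $P_0(x)\subset P$, is purely combinatorial and does not depend on which maximal inequality is in force; Claim 2 uses Chebyshev together with the comparison (\ref{qequiv2}), which in the present setting we replace by Corollary \ref{cor1} (2). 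The only point to verify is that this Chebyshev step still produces the bound $\mathbf{m}_R^{C\gamma,q}(\mathfrak{M}_\sigma^t(\mathbf{f}))\leq \epsilon$ for $R\in\mathcal{D}_\nu$, with the constant $C$ now coming from Corollary \ref{cor1} (2) rather than from Lemma \ref{maximal2} (2); this is automatic under our hypotheses. Consequently, there is no genuine obstacle beyond checking that the hypotheses line up---the proof is a bookkeeping exercise following the template of Theorem \ref{mainequiv}.
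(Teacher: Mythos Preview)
Your proposal is correct and follows exactly the paper's own approach: the paper states (just before Corollary \ref{cor1}) that both corollaries are obtained by replacing Lemma \ref{maximal2} with Lemma \ref{maximal3} throughout the proof of Theorem \ref{mainequiv}, and you have carried out precisely that substitution, correctly noting that Claim 1 in Proposition \ref{character4} is purely combinatorial while Claim 2 now invokes Corollary \ref{cor1} (2) in place of (\ref{qequiv2}). There is nothing to add.
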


\section{Proof of Theorem \ref{multipliermain0} and \ref{multipliermain}}\label{multimultipliersection}

We will first prove Theorem \ref{multipliermain} and then turn to the proof of Theorem \ref{multipliermain0}.
Before proving the theorem we set up some notation.
Write  $\fff:=(f_1,\dots,f_n)$,  $\xxi:=(\xi_1,\dots,\xi_n)$,   $\vv:=(v_1,\dots,v_n)$, $d\xxi:=d\xi_1\cdots d\xi_n$, and $d\vv:=dv_1\cdots dv_n$.
\subsection{Proof of Theorem \ref{multipliermain}}
Choose $0<t<r$ such that $s>nd/t> nd/r(\geq nd/2)$ and let $\delta=r-t>0$. Suppose $p_1,\dots,p_n>t=r-\delta$.
Then
\begin{equation}\label{sncondition}
s/n>d/t>d/\min{(2,p_1,p_2,\dots,p_n)}.
\end{equation}

Let $\widetilde{\vartheta^{(n)}}$ be a cutoff function on $(\mathbb{R}^d)^n$ such that $0\leq \widetilde{\vartheta^{(n)}}\leq 1$, $\widetilde{\vartheta^{(n)}}(\xxi)=1$ for $2^{-2}n^{-1/2}\leq |\xxi|\leq 2n^{1/2}$, and 
$Supp(\widetilde{\vartheta^{(n)}})\subset \big\{ \xxi \in(\mathbb{R}^d)^n: 2^{-3}n^{-1/2}\leq |\xxi|\leq 2^{2}n^{1/2}\big\}$.
Then using Calder\'on's reproducing formula, Littlewood-Paley partition of unity $\{\phi_k\}_{k\in\mathbb{Z}}$, and triangle inequality, we first see that
\begin{equation*}
  \mathcal{L}_{s}^{u,\widetilde{\vartheta^{(n)}}}[m]\lesssim  \mathcal{L}_s^{u,\vartheta^{(n)}}[m].
\end{equation*}
Thus it suffices to prove the estimate that
\begin{equation}\label{finalgoal}
\big\Vert T_m\fff \big\Vert_{L^p}\lesssim \mathcal{L}_s^{u,\widetilde{\vartheta^{(n)}}}[m] \sum_{i=1}^{n}{\Big(\Vert f_i\Vert_{X^{p_{i,i}}}\prod_{1\leq j\leq n, j\not= i}{\Vert f_j\Vert_{H^{p_{i,j}}}} \Big)}.
\end{equation}
We use a notation $\mathcal{L}_s^u[m]:=  \mathcal{L}_{s}^{u,\widetilde{\vartheta^{(n)}}}[m] $.

By using Littlewood-Paley partition of unity $\{\phi_k\}_{k\in\mathbb{Z}}$, $m(\xxi)$ can be decomposed as
\begin{align*}
m(\xxi)&=\sum_{k_1,\dots,k_n \in \mathbb{Z}}{m(\xxi)\widehat{\phi_{k_1}}(\xi_1)\cdots \widehat{\phi_{k_n}}(\xi_n)}\\
  &=\Big(\sum_{k_1\in\mathbb{Z}}\sum_{k_2,\dots,k_n\leq k_1}{\cdots}\Big)+\Big(\sum_{k_2\in\mathbb{Z}}\sum_{\substack{k_1<k_2\\k_3,\dots,k_n\leq k_2}}{\cdots}\Big)+\dots+ \Big(\sum_{k_n\in\mathbb{Z}}\sum_{k_1,\dots,k_{n-1}<k_n}{\cdots}\Big)\\
  &=:m^{(1)}(\xxi)+m^{(2)}(\xxi)+\dots+m^{(n)}(\xxi).
\end{align*}
Then (\ref{finalgoal}) is a consequence of the following estimates that
\begin{equation*}
\big\Vert T_{m^{(i)}}\fff \big\Vert_{L^p}\lesssim  \mathcal{L}_s^u[m]\Vert f_i\Vert_{X^{p_{i,i}}}\prod_{{1\leq j\leq n, j\not= i}}\Vert f_j\Vert_{H^{p_{i,j}}}
\end{equation*} for each $1\leq i\leq n$.
We only concern ourselves with the case $i=1$ by setting  $p_{j}:=p_{1,j}$ for $1\leq j\leq n$, and use symmetry for other cases. Suppose $1/p=1/p_1+\dots +1/p_n$.

We write \begin{align*}
m^{(1)}(\xxi)&=\sum_{k\in\mathbb{Z}}\sum_{k_2,\dots,k_n\leq k}{m(\xxi)\widehat{\phi_k}(\xi_1)\widehat{\phi_{k_2}}(\xi_2)\cdots\widehat{\phi_{k_n}}(\xi_n)}\\
 &=\sum_{k\in\mathbb{Z}}{m(\xxi)\widetilde{\vartheta^{(n)}}(\xxi/2^k) \widehat{\phi_k}(\xi_1)\sum_{k_2,\dots,k_n\leq k}\widehat{\phi_{k_2}}(\xi_2)\cdots\widehat{\phi_{k_n}}(\xi_n)}
\end{align*} 
since $\widetilde{\vartheta^{(n)}}(\xxi/2^k)=1$ for $ 2^{k-2}\leq |\xi_1|\leq 2^{k}$ and $|\xi_j|\leq 2^{k}$ for $2\leq j\leq n$.
Let 
\begin{equation*}
m_k(\xxi):=m(\xxi)\widetilde{\vartheta^{(n)}}(\xxi/2^k). 
\end{equation*}
Then we note that
\begin{equation}\label{mkbound}
\big\Vert m_k(2^k\cdot )\big\Vert_{L_s^u((\mathbb{R}^d)^n)}\leq \mathcal{L}_s^u[m]
\end{equation} 
and 
\begin{equation*}
m^{(1)}(\xxi)=\sum_{k\in\mathbb{Z}}{m_k(\xxi) \widehat{\phi_k}(\xi_1)\sum_{k_2,\dots,k_n\leq k}\widehat{\phi_{k_2}}(\xi_2)\cdots\widehat{\phi_{k_n}}(\xi_n)}.
\end{equation*}
We further decompose $m^{(1)}$ as
\begin{equation*}
m^{(1)}(\xxi)=m^{(1)}_{low}(\xxi)+m^{(1)}_{high}(\xxi)
\end{equation*} where
\begin{equation*}
m^{(1)}_{low}(\xxi):=\sum_{k\in\mathbb{Z}}m_k(\xxi)\widehat{\phi_k}(\xi_1)\sum_{\substack{k_2,\dots,k_n\leq k\\ \max_{2\leq j\leq n}{(k_j)}\geq k-3-\lfloor \log_2{n}\rfloor}}{\widehat{\phi_{k_2}}(\xi_2)\cdots\widehat{\phi_{k_n}}(\xi_n)},
\end{equation*}
\begin{equation*}
m^{(1)}_{high}(\xxi):=\sum_{k\in\mathbb{Z}}m_k(\xxi)\widehat{\phi_k}(\xi_1)\sum_{k_2,\dots,k_n\leq k-4-\lfloor \log_2n\rfloor}{\widehat{\phi_{k_2}}(\xi_2)\cdots\widehat{\phi_{k_n}}(\xi_n)}.
\end{equation*}
We refer to $T_{m_{low}^{(1)}}$ as the low frequency part, and $T_{m_{high}^{(1)}}$ as the high frequency part of $T_{m^{(1)}}$ (due to the Fourier supports of $T_{m_{low}^{(1)}}\fff$ and $T_{m_{high}^{(1)}}\fff$).

\subsubsection{Low frequency part}
To obtain the estimates for the operator $T_{m^{(1)}_{low}}$,
we observe that
\begin{equation*}
T_{m^{(1)}_{low}}\fff(x)=\sum_{k\in\mathbb{Z}}\sum_{\substack{k_2,\dots,k_n\leq k\\ \max_{2\leq j\leq n}{(k_j)}\geq k-3-\lfloor \log_2{n}\rfloor}} {T_{m_k}\big( (f_1)_k,(f_2)_{k_2},\dots,(f_n)_{k_n}\big)(x)}
\end{equation*}
where $(g)_l:=\phi_l\ast g$ for $g\in S$ and $l\in\mathbb{Z}$.
 It suffices to treat only the sum over $k_3,\dots,k_n\leq k_2$ and $k-3-\lfloor \log_2{n}\rfloor \leq k_2\leq k$, and we will actually prove that
\begin{align*}
&\Big\Vert \sum_{k\in\mathbb{Z}}\sum_{k-3-\lfloor \log_2{n}\rfloor \leq k_2\leq k}\sum_{k_3,\dots,k_n\leq k_2}T_{m_k}\big((f_1)_k,(f_2)_{k_2},\dots,(f_n)_{k_n} \big)\Big\Vert_{L^p}\nonumber\\
&\lesssim \mathcal{L}_s^u[m]\Vert f_1\Vert_{X^{p_1}}\prod_{j=2}^{n}\Vert f_j\Vert_{H^{p_j}}. 
\end{align*}

We define $\Phi_l:=2^{ld}\Phi_0(2^l\cdot)$ for $l\in\mathbb{Z}$ as before, and then observe that for any $g\in S$
\begin{equation*}
\sum_{m\leq l}\phi_m\ast g=\Phi_l\ast g.
\end{equation*}
Then
\begin{align*}
&\sum_{k\in\mathbb{Z}}\sum_{k-3-\lfloor \log_2{n}\rfloor \leq k_2\leq k}\sum_{k_3,\dots,k_n\leq k_2}T_{m_k}\big((f_1)_k,(f_2)_{k_2},\dots,(f_n)_{k_n} \big)(x)\\
&=\sum_{k\in\mathbb{Z}}\sum_{k-3-\lfloor \log_2{n}\rfloor \leq k_2\leq k}T_{m_k}\big((f_1)_k,(f_2)_{k_2},(f_3)^{k_2},\dots,(f_n)^{k_2} \big)(x)
\end{align*}
where  $(f_j)^{k_2}:=\Phi_{k_2}\ast f_j$.  
Since the second sum is a finite sum over $k_2$ near $k$, we may only consider the case $k_2=k$ and thus our claim is 
\begin{equation}\label{mainmainclaim}
\Big\Vert \sum_{k\in\mathbb{Z}}{T_{m_k}\big((f_1)_k,(f_2)_k,(f_3)^k,\dots,(f_n)^k \big)}\Big\Vert_{L^p}\lesssim \mathcal{L}_s^u[m]\Vert f_1\Vert_{X^{p_1}}\prod_{j=2}^{n}{\Vert f_j\Vert_{H^{p_j}}}.
\end{equation} 

To prove (\ref{mainmainclaim}) let $0<\epsilon<\min{(1,t)}$ such that $1/\epsilon=1-1/u+1/t$,
which implies $u'=\frac{1}{1/\epsilon-1/t}$ where $1/u+1/u'=1$.
Then using Nikolskii's inequality and H\"older's inequality with $u'/\epsilon>1$ one has
\begin{align*}
&\big|T_{m_k}\big((f_1)_k,(f_2)_k,(f_3)^k,\dots,(f_n)^k \big)(x) \big|\\
&=\Big|\int_{(\mathbb{R}^d)^n}{m_k^{\vee}(\vv)(f_1)_k(x-v_1)(f_2)_k(x-v_2)\prod_{j=3}^{n}(f_j)^k(x-v_j)}d\vv \Big|\\
&\lesssim 2^{nkd(1/\epsilon-1)}\Big( \int_{(\mathbb{R}^d)^n}{|m_k^{\vee}(\vv)|^{\epsilon}\big|(f_1)_k(x-v_1)\big|^{\epsilon}\big|(f_2)_k(x-v_2)\big|^{\epsilon}\prod_{j=3}^{n}\big| (f_j)^k(x-v_j)\big|^{\epsilon}}d\vv \Big)^{1/\epsilon}\\
&\leq 2^{nkd(1/\epsilon-1)}\Big( \int_{(\mathbb{R}^d)^n}{\big(1+2^k|v_1|+\cdots+2^k|v_n| \big)^{su'}|m_k^{\vee}(\vv)|^{u'}}d\vv \Big)^{1/u'}\\
 &\relphantom{=}\times \Big( \int_{(\mathbb{R}^d)^n}{\frac{\big|(f_1)_k(x-v_1) \big|^t\big|(f_2)_k(x-v_2) \big|^t}{\big( 1+2^k|v_1|+\cdots +2^k|v_n|\big)^{st}}\prod_{j=3}^{n}{\big| (f_j)^k(x-v_n)\big|^t}}d\vv\Big)^{1/t}.
\end{align*}
By using Hausdorff Young's inequality with $u'\geq 2$ and (\ref{mkbound}),
\begin{align*}
&\Big( \int_{(\mathbb{R}^d)^n}{\big(1+2^k|v_1|+\cdots+2^k|v_n| \big)^{su'}|m_k^{\vee}(\vv)|^{u'}}d\vv \Big)^{1/u'}\lesssim 2^{nkd/u} \big\Vert m_k(2^k\cdot )\big\Vert_{L_s^u}\lesssim 2^{nkd/u}\mathcal{L}_s^u[m],
\end{align*}
and
\begin{align*}
& \Big( \int_{(\mathbb{R}^d)^n}{\frac{\big|(f_1)_k(x-v_1) \big|^t\big|(f_2)_k(x-v_2) \big|^t}{\big( 1+2^k|v_1|+\cdots +2^k|v_n|\big)^{st}}\prod_{j=3}^{n}{\big| (f_j)^k(x-v_n)\big|^t}}d\vv\Big)^{1/t}\\
&\leq \Big( \int_{\mathbb{R}^d}{\frac{|(f_1)_k(x-v_1)|^t}{(1+2^k|v_1|)^{st/n}}}dv_1\Big)^{1/t}\Big( \int_{\mathbb{R}^d}{\frac{|(f_2)_k(x-v_2)|^t}{(1+2^k|v_2|)^{st/n}}}dv_2\Big)^{1/t}\prod_{j=3}^{n}\Big( \int_{\mathbb{R}^d}{\frac{|(f_j)^k(x-v_j)|^t}{(1+2^k|v_j|)^{st/n}}}dv_j\Big)^{1/t}\\
&\leq 2^{-nkd/t}\mathfrak{M}_{s/n,2^k}^{t}(f_1)_k(x)\mathfrak{M}_{s/n,2^k}^{t}(f_2)_k(x)\prod_{j=3}^{n}\mathfrak{M}_{s/n,2^k}^{t}(f_j)^k(x).
\end{align*}
Therefore
\begin{align}\label{keyestimate}
&\big|T_{m_k}\big((f_1)_k,(f_2)_k,(f_3)^k,\dots,(f_n)^k \big)(x) \big|\nonumber\\
&\lesssim \mathcal{L}_s^u[m]\mathfrak{M}_{s/n,2^k}^{t}(f_1)_k(x)\mathfrak{M}_{s/n,2^k}^{t}(f_2)_k(x)\prod_{j=3}^{n}\mathfrak{M}_{s/n,2^k}^{t}(f_j)^k(x)
\end{align}
because $1/\epsilon-1+1/u-1/t=0$.

Let 
\begin{equation*}
S_Q^{(1)}:=S_Q^{1/4,2}\big(\mathfrak{M}_{s/n,2^k}^t(f_1)_k \big)\quad \text{ and } \quad S_Q^{(2)}:=S_Q^{1/4,2}\big(\mathfrak{M}_{s/n,2^k}^t(f_2)_k \big).
\end{equation*}
Then it follows from (\ref{claim1}) that $S_Q^{(1)}$ and $S_Q^{(2)}$ are measurable subsets of $Q$ such that
$|S_Q^{(1)}|, |S_Q^{(2)}|\geq \frac{3}{4}|Q|$.
We observe that $|S_Q^{(1)}\cap S_Q^{(2)}|\geq \frac{1}{2}|Q|$ and thus, for any $\tau>0$
\begin{equation}\label{intersectionchi}
\chi_Q(x)\lesssim_{\tau}\mathcal{M}_{\tau}\big(\chi_{S_Q^{(1)}\cap S_Q^{(2)}}\big)(x)\chi_Q(x),
\end{equation} using the argument in (\ref{inversechi}).
Clearly, the constant in the inequality is independent of $Q$.

Now we choose $\tau<\min{(1,p)}$, and  apply (\ref{keyestimate}), (\ref{infmax2}), (\ref{intersectionchi}), and (\ref{hlmax}) to obtain
\begin{align*}
&\Big\Vert \sum_{k\in\mathbb{Z}}{T_{m_k}\big((f_1)_k,(f_2)_k,(f_3)^k,\dots,(f_n)^k \big)}\Big\Vert_{L^p}\\
&\lesssim \mathcal{L}_s^u[m]\Big\Vert \sum_{k\in\mathbb{Z}}{\sum_{Q\in \mathcal{D}_k}{{\mathfrak{M}_{s/n,2^k}^t(f_1)_k} {\mathfrak{M}_{s/n,2^k}^t(f_2)_k} \prod_{j=3}^{n}{\mathfrak{M}_{s/n,2^k}^t(f_j)^k}  \chi_Q  }}\Big\Vert_{L^{p}}\\
&\lesssim \mathcal{L}_s^u[m]  \Big\Vert \sum_{k\in\mathbb{Z}}{\sum_{Q\in \mathcal{D}_k}{\Big[\prod_{i=1}^{2}\Big( \inf_{y\in Q}{\mathfrak{M}_{s/n,2^k}^t(f_i)_k(y)}\Big)\Big]\Big[ \prod_{j=3}^{n}\Big( \inf_{y\in Q}{\mathfrak{M}_{s/n,2^k}^t(f_j)^k(y)}\Big)\Big]  \mathcal{M}_{\tau}{(\chi_{S_Q^{(1)}\cap S_Q^{(2)}})  }}}\Big\Vert_{L^{p}}\\
&\lesssim \mathcal{L}_s^u[m]  \Big\Vert \sum_{k\in\mathbb{Z}}{\sum_{Q\in \mathcal{D}_k}{\Big[\prod_{i=1}^{2}\Big( \inf_{y\in Q}{\mathfrak{M}_{s/n,2^k}^t(f_i)_k(y)}\Big)\Big]\Big[ \prod_{j=3}^{n}\Big( \inf_{y\in Q}{\mathfrak{M}_{s/n,2^k}^t(f_j)^k(y)}\Big)\Big]  {\chi_{S_Q^{(1)}}  }{\chi_{S_Q^{(2)}}  }}}\Big\Vert_{L^{p}}.
\end{align*}
By using H\"older's inequality, 
the $L^p$ norm is dominated by a constant times
\begin{align*}
& \prod_{i=1}^{2}\Big\Vert \Big\{\sum_{Q\in\mathcal{D}_k}{   \Big( \inf_{y\in Q}{\mathfrak{M}_{s/n,2^k}^t(f_i)_k(y)} \Big)  \chi_{S_Q^{(i)}}    }\Big\}_{k\in\mathbb{Z}}\Big\Vert_{L^{p_i}(l^2)}\prod_{j=3}^{n}{\big\Vert \big\{ \mathfrak{M}_{s/n,2^k}^{t}(f_j)^k\big\}_{k\in\mathbb{Z}}\big\Vert_{L^{p_j}(l^{\infty})}}\\
&\lesssim \Vert f_1\Vert_{X^{p_1}}\Vert f_2\Vert_{X^{p_2}}\prod_{j=3}^{n}{\Vert f_j\Vert_{H^{p_j}}} 
\end{align*}
where the inequality follows from Lemma \ref{character2} (1), Proposition \ref{character4}, Lemma \ref{maximal2} (1), and (\ref{localhardy}) with (\ref{sncondition}). 
Since $\Vert f_2\Vert_{X^{p_2}}\lesssim \Vert f_2\Vert_{H^{p_2}}$, one finally obtains (\ref{mainmainclaim}).

\subsubsection{High frequency part}
The proof for the high frequency part relies on the fact that if $\widehat{g_k}$ is supported on $\{\xi : C^{-1} 2^{k}\leq |\xi|\leq C2^{k}\}$ for $C>1$ then
\begin{equation}\label{marshall}
\Big\Vert \Big\{ \phi_k\ast \Big(\sum_{l=k-h}^{k+h}{g_l}\Big)\Big\}_{k\in\mathbb{Z}}\Big\Vert_{L^p(l^q)}\lesssim_{h,C} \big\Vert \big\{ g_k\big\}_{k\in\mathbb{Z}}\big\Vert_{L^p(l^q)}
\end{equation} for $h\in \mathbb{N}$. The proof of (\ref{marshall}) is elementary and standard, so it will not pursued here. Just use the estimate $|\phi_k\ast g_l(x)|\lesssim \mathfrak{M}_{\sigma,2^l}g_l(x)$ for $k-h\leq l\leq k+h$ and apply Lemma \ref{maximal2} (1).

We note that
\begin{equation*}
T_{m_{high}^{(1)}}\fff(x)=\sum_{k\in\mathbb{Z}}{T_{m_k}\big((f_1)_k,(f_2)^{k,n},\dots, (f_n)^{k,n} \big)(x)}
\end{equation*}
where $(f_j)^{k,n}:=\Phi_{k-4-\lfloor \log_2{n}\rfloor}\ast f_j$.

Observe that the Fourier transform of $T_{m_k}\big((f_1)_k,(f_2)^{k,n},\dots,(f_n)^{k,n} \big)$ is supported in $\big\{\xi\in\mathbb{R}^d : 2^{k-3}\leq |\xi|\leq 2^{k+2} \big\}$ and thus (\ref{marshall}) yields that
\begin{align*}
\big\Vert T_{m_{high}^{(1)}}\fff \big\Vert_{L^p}&\lesssim \big\Vert T_{m_{high}^{(1)}}\fff \big\Vert_{H^p}\approx \big\Vert T_{m_{high}^{(1)}}\fff \big\Vert_{\dot{F}_p^{0,2}}\\
&\lesssim \big\Vert \big\{  T_{m_k}\big((f_1)_k,(f_2)^{k,n},\dots,(f_n)^{k,n} \big)\big\}_{k\in\mathbb{Z}}\big\Vert_{L^p(l^{2})}.
\end{align*}
Using the argument that led to (\ref{keyestimate}), one has
\begin{equation*}
\big| T_{m_k}\big((f_1)_k,(f_2)^{k,n},\dots,(f_n)^{k,n} \big)(x)\big| \lesssim \mathcal{L}_s^u[m]\mathfrak{M}_{s/n,2^k}^{t}(f_1)_k(x)\prod_{j=2}^{n}\mathfrak{M}_{s/n,2^k}^{t}(f_j)^{k,n}(x).
\end{equation*}
Fix $0<\gamma<1$. For $Q\in \mathcal{D}_k$ let $S_Q:=S_Q^{\gamma,2}\big(\mathfrak{M}_{s/n,2^k}^t(f_1)_k \big)$ as before and proceed the similar arguments to obtain that
\begin{align*}
\big\Vert T_{m_{high}^{(1)}}\fff \big\Vert_{L^p}&\lesssim \mathcal{L}_s^u[m]\Big\Vert  \Big\{ \mathfrak{M}_{s/n,2^k}^{t}(f_1)_k\prod_{j=2}^{n}\mathfrak{M}_{s/n,2^k}^{t}(f_j)^{k,n}\Big\}_{k\in\mathbb{Z}} \Big\Vert_{L^p(l^2)}\\
&\lesssim \mathcal{L}_s^u[m]\Big\Vert  \Big\{ \sum_{Q\in\mathcal{D}_k}{ \Big( \inf_{y\in Q}{\mathfrak{M}_{s/n,2^k}^{t}(f_1)_k(y)}\Big) \Big[\prod_{j=2}^{n}\Big(\inf_{y\in Q}{\mathfrak{M}_{s/n,2^k}^{t}(f_j)^{k,n}(y)\Big) }\Big]\chi_{S_Q}} \Big\}_{k\in\mathbb{Z}} \Big\Vert_{L^p(l^2)}\\
&\lesssim \mathcal{L}_s^u[m]\Big\Vert \Big\{ \sum_{Q\in\mathcal{D}_k}{\Big( \inf_{y\in Q}{\mathfrak{M}_{s/n,2^k}^t(f_1)(y)}\Big)\chi_{S_Q}}\Big\}_{k\in\mathbb{Z}}\Big\Vert_{L^{p_1}(l^2)} \prod_{j=2}^{n}{\Vert f_j\Vert_{H^{p_j}}}\\
&\lesssim \mathcal{L}_s^u[m]\Vert f_1\Vert_{X^{p_1}}\prod_{j=2}^{n}{\Vert f_j\Vert_{H^{p_j}}}.
\end{align*}

\subsection{Proof of Theorem \ref{multipliermain0}}
As in the proof of Theorem \ref{multipliermain}, it suffices to deal with  $T_{m^{(1)}}$.
Suppose $1<p<\infty$ and $1<p_{j}\leq \infty$ for $1\leq j\leq n$.
Then we will prove
\begin{equation}\label{multi0result}
\big\Vert T_{m^{(1)}}\fff \big\Vert_{L^p}\lesssim  \mathcal{L}_s^2[m]\Vert f_i\Vert_{X^{p_{1}}}\prod_{j=2}^{n}\Vert f_j\Vert_{L^{p_{j}}}, \qquad s>nd/2 
\end{equation} for each $1\leq i\leq n$.
First of all, it follows, from Theorem \ref{multipliermain} with $r=u=2$, that
(\ref{multi0result}) holds for $2\leq p_{j}\leq \infty$.

Now assume $1<p\leq \min{(p_{1},\dots,p_{n})}<2$. Observe that only one of $p_j's$ could be less than $2$ because $1/p=1/p_1+\dots+1/p_n<1$, and we will actually look at two cases $1<p_1<2\leq p_2,\dots,p_n$ and $1<p_2<2\leq p_1,p_3,\dots,p_n$.
Let $T_{m^{(1)}}^{*j}$ be the $j$th transpose of $T_{m^{(1)}}$, defined by the unique operator satisfying
\begin{equation*}
\big\langle T_{m^{(1)}}^{*j}(f_1,\dots,f_n),h\big\rangle:=\big\langle T_{m^{(1)}}(f_1,\dots,f_{j-1},h,f_{j+1},\dots,f_n),f_j\big\rangle
\end{equation*}
for $f_1,\dots,f_n,h\in S$.
Then it is known in \cite{Tom} that $T_{m^{(1)}}^{*j}=T_{(m^{(1)})^{*j}}$ where 
\begin{equation*}
(m^{(1)})^{*j}(\xi_1,\dots,\xi_n)=m^{(1)}\big(\xi_1,\dots,\xi_{j-1},-(\xi_1+\dots+\xi_n),\xi_{j+1},\dots,\xi_n\big),
\end{equation*}
and then
\begin{equation}\label{multiest}
\mathcal{L}_s^2\big[(m^{(1)})^{*j}\big]\lesssim \mathcal{L}_s^2[m^{(1)}]\lesssim \mathcal{L}_s^2[m].
\end{equation}

\subsubsection{The case $1<p<p_1<2$}
Let $2<p',p_1'<\infty$ be the conjugates of $p, p_1$, respectively. That is, $1/p+1/p'=1/p_1+1/p_1'=1$. Then $X^{p_1}=L^{p_1}$ and
$1/p_1'=1/p'+1/p_2+\dots+1/p_n$.
Therefore
\begin{align*}
\big\Vert T_{m^{(1)}}(f_1,\dots,f_n)\big\Vert_{L^p} &= \sup_{\Vert h\Vert_{L^{p'}}=1}{\big|\big\langle T_{(m^{(1)})^{*1}}(h,f_2,\dots,f_n),f_1\big\rangle \big|}\\
&\leq \Vert f_1\Vert_{L^{p_1}} \sup_{\Vert h\Vert_{L^{p'}}=1}{\big\Vert T_{(m^{(1)})^{*1}}(h,f_2,\dots,f_n)\big\Vert_{L^{p_1'}} }\\
&\lesssim \mathcal{L}_s^2\big[(m^{(1)})^{*1}\big]\prod_{j=1}^{n}\Vert f_j\Vert_{L^{p_{j}}}\lesssim \mathcal{L}_s^2[m]\prod_{j=1}^{n}\Vert f_j\Vert_{L^{p_{j}}}
\end{align*}
where the second inequality follows from Theorem \ref{multipliermain} and the last one from (\ref{multiest}).
\subsubsection{The case $1<p<p_2<2$}
Similarly, let $2<p',p_2'<\infty$ be the conjugates of $p,p_2$ and then
\begin{align*}
\big\Vert T_{m^{(1)}}(f_1,\dots,f_n)\big\Vert_{L^p} &= \sup_{\Vert h\Vert_{L^{p'}}=1}{\big|\big\langle T_{(m^{(1)})^{*2}}(f_1,h,f_3,\dots,f_n),f_2\big\rangle \big|}\\
&\leq\Vert f_2\Vert_{L^{p_2}} \sup_{\Vert h\Vert_{L^{p'}}=1}{\big\Vert T_{(m^{(1)})^{*2}}(f_1,h,f_3,\dots,f_n)\big\Vert_{L^{p_2'}} }\\
&\lesssim \mathcal{L}_s^2\big[(m^{(1)})^{*2}\big]\Vert f_1\Vert_{X^{p_{1}}}\prod_{j=2}^{n}\Vert f_j\Vert_{L^{p_{j}}}\lesssim \mathcal{L}_s^2[m]\Vert f_1\Vert_{X^{p_{1}}}\prod_{j=2}^{n}\Vert f_j\Vert_{L^{p_{j}}}.
\end{align*}

\section{Proof of Theorem \ref{pseudotheorem} }\label{pseudosection}

We use notations $\fff:=(f_1,\dots,f_n)$,  $\xxi:=(\xi_1,\dots,\xi_n)$, $\ll:=(l_1,\dots,l_n)$, $d\xxi:=d\xi_1\cdots d\xi_n$, $d\eeta:=d\eta_1\cdots d\eta_n$, $\aa:=(\a_1,\dots,\a_n)$, $\|\aa \|:=|\a_1|+\dots+|\a_n|$, $\partial_{\xxi}^{\aa}:=\partial_{\xi_1}^{\alpha_1}\cdots\partial_{\xi_n}^{\alpha_n}$, and $\partial_{\eeta}^{\aa}:=\partial_{\eta_1}^{\alpha_1}\cdots\partial_{\eta_n}^{\alpha_n}$. 

The proof is based on the decomposition technique by B\'enyi-Torres \cite{Be_To}.
Throughout this section we regard $\phi_0=\Phi_0$, not the original meaning $\phi_0=\phi$, so that $\{\phi_k\}_{k\in\mathbb{N}_0}$ is inhomogeneous Littlewood-Paley partition of unity.
We write $\mathcal{A}_N^{(m)}:=\Vert a\Vert_{\MM_n\mathcal{S}_{1,1,N}^{m}} $ for simplicity.
\subsection{Decomposition and reduction}
By using Littlewood-Paley partition of unity, $a\in\MM_n\mathcal{S}_{1,1}^{m}$ can be written as
\begin{align*}
a(x,\xxi)&= \sum_{k_1,\dots, k_n\in\mathbb{N}_0}{a(x,\xxi)\widehat{\phi_{k_1}}(\xi_1)\cdots \widehat{\phi_{k_n}}(\xi_n)}\\
     &= \Big(\sum_{k_2,\dots,k_n\leq k_1}{\cdots}\Big)+\Big(\sum_{\substack{k_3,\dots,k_n\leq k_2\\ k_1<k_2}}{\cdots}\Big)+\dots+\Big(\sum_{k_1,\dots,k_{n-1}<k_n}{\cdots}\Big)\\
     &=: a^{(1)}(x,\xxi)+a^{(2)}(x,\xxi)+\dots+a^{(n)}(x,\xxi).
\end{align*} 
Then, due to the symmetry, it is enough to work only with $a^{(1)}$ and our actual goal is to show that if $s>\tau_{p,q}$
then
\begin{equation}\label{maingoal}
\big\Vert T_{[a^{(1)}]}\fff\big\Vert_{F_p^{s,q}}\lesssim \mathcal{A}_{N}^{(m)} \Vert f_1\Vert_{F_{p_1}^{s+m,q}}\prod_{j=2}^{n}{\Vert f_j\Vert_{h^{p_j}}}
\end{equation} for sufficiently large $N>0$ and  ${1}/{p}={1}/{p_1}+\dots+{1}/{p_n}$.

Observe that
\begin{equation*}
a^{(1)}(x,\xxi)=\sum_{k=0}^{\infty}{a(x,\xxi)\widehat{\phi_{k}}(\xi_1)\widehat{\Phi_{k}}(\xi_2)\cdots \widehat{\Phi_{k}}(\xi_n)}=:\sum_{k=0}^{\infty}{a_k(x,\xxi)}.
\end{equation*}
 Then each $a_k$ belongs to $\MM_n\mathcal{S}_{1,1}^{m}$ and for  $N\in\mathbb{N}_0$
\begin{equation}\label{symbolak}
\Vert a_k\Vert_{\MM_n\mathcal{S}_{1,1,N}^{m}}\lesssim \mathcal{A}_N^{(m)} \quad \text{unifomly in }~ k.
\end{equation}

Let $\{\widetilde{\phi_k}\}_{k\in\mathbb{N}_0}$ be a collection of Schwartz functions so that $\widehat{\widetilde{\phi_0}}(\xi):=\widehat{\phi_0}(\xi)+\widehat{\phi_1}(\xi)(=\widehat{\Phi_1}(\xi) )$ and 
$\widehat{\widetilde{\phi_k}}(\xi)=\widehat{\phi_k}(2\xi)+\widehat{\phi_k}(\xi)+\widehat{\phi_k}(2^{-1}\xi)$ for $k\geq 1$.
By using Fourier series expansion and the fact that $\widehat{\widetilde{\phi_k}}=1$ on $Supp(\widehat{\phi_k})$ and $\widehat{{\Phi_{k+1}}}=1$ on $Supp(\widehat{\Phi_k})$, one can write
\begin{equation*}
a_k(x,\xxi)=\sum_{\ll\in (\mathbb{Z}^d)^n}{c_k^{\ll}(x)\varphi_k^{l_1}(\xi_1)\vartheta_k^{l_2}(\xi_2)\cdots\vartheta_k^{l_n}(\xi_n)}
\end{equation*}
where 
\begin{equation*}
c_k^{\ll}(x):=\int_{(\mathbb{R}^d)^n}{a_k(x,2^k\eta_1,\dots,2^k\eta_n)e^{-2\pi i\langle \eta_1,l_1\rangle}\dots e^{-2\pi i\langle \eta_n,l_n\rangle}}d\eeta
\end{equation*}
\begin{equation*}
\varphi_k^{l_1}(\xi_1):=e^{2\pi i\langle l_1,2^{-k}\xi_1\rangle}\widehat{\widetilde{\phi_k}}(\xi_1), \quad \vartheta_k^{l_j}(\xi_j):=e^{2\pi i\langle l_j,2^{-k}\xi_j\rangle}\widehat{{\Phi_{k+1}}}(\xi_j), ~2\leq j\leq n.
\end{equation*}
It can be verified that for $l\in\mathbb{Z}^d$ and multi-index $\alpha$ one has
\begin{equation*}
Supp(\varphi_0^{l})\subset \{\xi\in\mathbb{R}^d: |\xi|\leq 2\}, \quad Supp(\varphi_k^l)\subset \{\xi\in\mathbb{R}^d: 2^{k-3}\leq |\xi|\leq 2^{k+1}\}~\text{ for }~k\geq 1,
\end{equation*}
\begin{equation*}
Supp(\vartheta_k^l)\subset \{\xi\in\mathbb{R}^d: |\xi|\leq 2^{k+1}\}~\text{ for }~k\geq 0,
\end{equation*}
\begin{equation*}
\big| \partial_{\xi}^{\alpha}\varphi_k^l(\xi)\big|,~ \big| \partial_{\xi}^{\alpha}\vartheta_k^l(\xi)\big| \lesssim 2^{-k|\alpha|} \quad \text{ for }~k\geq 0.
\end{equation*}
We rewrite $a_k(x,\xxi)$ as
\begin{equation*}
a_k(x,\xxi)=\sum_{\ll\in (\mathbb{Z}^d)^n}\sum_{u=0}^{\infty}{c_{k,u}^{\ll}(x)\varphi_k^{l_1}(\xi_1)\vartheta_k^{l_2}(\xi_2)\cdots\vartheta_k^{l_n}(\xi_n)}=: \sum_{\ll\in (\mathbb{Z}^d)^n}\sum_{u=0}^{\infty} A_{k,u}^{\ll}(x,\xxi)
\end{equation*}
where
\begin{equation*}
c_{k,0}^{\ll}:=\Phi_k\ast c_k^{\ll} \quad \text{ (low frequency part) }
\end{equation*}
\begin{equation*}
c_{k,u}^{\ll}:=\phi_{k+u}\ast c_k^{\ll},\quad u\geq 1 \quad \text{ (high frequency part) }.
\end{equation*}
Then
\begin{equation*}
T_{[a^{(1)}]}\fff=\sum_{\ll\in (\mathbb{Z}^d)^n}\sum_{k,u\in\mathbb{N}_0}{T_{[A_{k,u}^{\ll}]}\fff}.
\end{equation*}
and
\begin{equation*}
\big\Vert T_{[a^{(1)}]}\fff\big\Vert_{F_p^{s,q}}^{\min{(1,p,q)}}\leq \sum_{\ll\in (\mathbb{Z}^d)^n}{\Big\Vert \sum_{k,u\in\mathbb{N}_0}{T_{[A_{k,u}^{\ll}]}\fff}\Big\Vert_{F_p^{s,q}}^{\min{(1,p,q)}} }.
\end{equation*}
Therefore the proof of (\ref{maingoal}) can be deduced from the estimate that 
\begin{equation}\label{maindeduce}
\Big\Vert \sum_{k,u\in\mathbb{N}_0}{T_{[A_{k,u}^{\ll}]}\fff}\Big\Vert_{F_p^{s,q}}\lesssim \Big(\prod_{j=1}^{n}\frac{1}{(1+|l_j|)^{\mathcal{J}}}\Big) \mathcal{A}_N^{(m)}\Vert f_1\Vert_{F_{p_1}^{s+m,q}}\prod_{j=2}^{n}\Vert f_j\Vert_{h^{p_j}} 
\end{equation}
for sufficiently large $N>0$ and some $ \mathcal{J}   >d/\min{(1,p,q)}$.

\subsection{Pointwise estimate of $T_{[A_{k,u}^{\ll}]}\fff$}
Choose $N$ and $\sigma$ such that $N>s$, $N>d/\min{(1,p,q)}+d/\min{(p_1,\dots,p_n,q)}$, and 
$d/\min{(p_1,\dots,p_n,q)}<\sigma<N-d/\min{(1,p,q)}$. Let $\mathcal{J}:= N-\sigma ( > d/\min{(1,p,q)})$.
Then we will prove that
\begin{align}\label{taest}
&\big|T_{[A_{k,u}^{\ll}]}\fff(x) \big|\quad \lesssim \Big(\prod_{j=1}^{n}\frac{1}{\big(1+|l_j| \big)^{\mathcal{J}}} \Big)\mathcal{A}_N^{(m)}2^{km}2^{-uN} \mathfrak{M}_{\sigma,2^k}(f_1)_k(x)\Big(\prod_{j=2}^{n}{\mathfrak{M}_{\sigma,2^k}(f_j)^k(x)} \Big).
\end{align} 
We first see that
\begin{align}\label{taest1}
&\big|T_{[A_{k,u}^{\ll}]}\fff(x) \big|=\Big|c_{k,u}^{\ll}(x) \varphi_k^{l_1}(D)f_1(x)\prod_{j=2}^{n}\vartheta_k^{l_j}(D)f_j(x)\Big|.
\end{align}

Let $\phi_0^*:=\Phi_2$ and ${\phi_k^*}$ be Schwartz functions such that
\begin{equation*} 
 Supp(\widehat{{\phi_k^*}})\subset \big\{\xi: 2^{k-4}\leq |\xi|\leq 2^{k+2} \big\}, \quad \widehat{\phi_k^*}=1 \text{ on }~ Supp(\widehat{\widetilde{\phi_k}}), \quad \text{for }~k\geq 1.  
\end{equation*}
Setting
\begin{equation*}
(f_1)_{k}:={{\phi_k^*}}\ast f_1 \quad \text{ and } \quad (f_j)^{k}:={{\Phi_{k+2}}}\ast f_j, \quad 2\leq j\leq n,
\end{equation*} 
one obtains that
\begin{align}\label{varphiest}
\big|\varphi_k^{l_1}(D)f_1(x)\big|&= \big|\varphi_k^{l_1}(D)(f_1)_k(x)\big|\leq \int_{\mathbb{R}^d}{\big|\big(\varphi_k^{l_1}\big)^{\vee}(y)(f_1)_k(x-y) \big|}dy\nonumber\\
&\leq \mathfrak{M}_{\sigma,2^k}(f_1)_k(x)\int_{\mathbb{R}^d}{\big(1+2^k|y| \big)^{\sigma}\big|\widetilde{\phi_k}\big(y+2^{-k}l_1\big) \big|}dy\nonumber\\
& \lesssim \big( 1+|l_1|\big)^{\sigma}\mathfrak{M}_{\sigma,2^k}(f_1)_k(x),
\end{align}
and a similar analysis reveals that for each $2\leq j\leq n$
\begin{equation}\label{varthetaest}
\big| \vartheta_k^{l_j}(D)f_j(x)\big| \lesssim \big(1+|l_j|\big)^{\sigma}\mathfrak{M}_{\sigma,2^k}(f_j)^k(x).
\end{equation}

We now claim that
\begin{equation}\label{ckuest}
|c_{k,u}^{\ll}(x)|\lesssim \Big(\prod_{j=1}^{n}{\frac{1}{(1+|l_j|)^N}} \Big)\mathcal{A}_N^{(m)}2^{km}2^{-uN}, \quad \text{ uniformly in } ~ \ll.
\end{equation}
By applying integration by parts and (\ref{symbolak}),  one has
\begin{align*}
\big|\partial_x^{\beta}c_k^{\ll}(x)\big|&\lesssim \Big(\prod_{j=1}^{n}{\frac{1}{(1+|l_j|)^N}} \Big) \sum_{|\alpha_1|,\dots,|\alpha_n|\leq N}{\int_{(\mathbb{R}^d)^n}{\big| \partial_{\eeta}^{\aa}\partial_{x}^{\beta}a_k(x,2^k\eta_1,\dots,2^k\eta_n)\big|2^{k|\aa|}}d\eeta}\\
&\lesssim_N \Big(\prod_{j=1}^{n}{\frac{1}{(1+|l_j|)^N}} \Big) \mathcal{A}_N^{(m)}2^{k(m+|\beta|)}
\end{align*}
where the second follows from the fact that the domain of the integral is actually $\big\{\eeta\in (\mathbb{R}^d)^n: |\eta_j|\leq 2 \text{ for } 1\leq j\leq n \big\}$.
This yields that
\begin{equation*}
\big|c_{k,0}^{\ll}(x)\big|\lesssim \big\Vert c_k^{\ll}\big\Vert_{L^{\infty}}\lesssim \Big(\prod_{j=1}^{n}{\frac{1}{(1+|l_j|)^N}} \Big) \mathcal{A}_N^{(m)}2^{km},
\end{equation*}
and for $u\geq 1$ one has
\begin{equation*}
|c_{k,u}^{\ll}(x)|\lesssim \sum_{\beta:|\beta|=N}{\big\Vert \partial_x^{\beta}c_k^{\ll}\big\Vert_{L^{\infty}}\int_{\mathbb{R}^d}{|y|^N|\phi_{k+u}(y)|}dy}\lesssim  \Big(\prod_{j=1}^{n}{\frac{1}{(1+|l_j|)^N}} \Big) \mathcal{A}_N^{(m)} 2^{km}2^{-uN}
\end{equation*} by using the vanishing moment property of $\phi_{k+u}$.
This proves (\ref{ckuest}).

 Finally,  (\ref{taest1}), (\ref{varphiest}),  (\ref{varthetaest}), and (\ref{ckuest}) establish (\ref{taest}).

\subsection{Proof of (\ref{maindeduce})}

We observe that
\begin{equation}\label{supportt}
Supp(\widehat{T_{[A_{k,u}^{\ll}]}\fff})\subset \big\{ \xi\in\mathbb{R}^d :|\xi|\leq 2^{k+u}+n2^k\big\}
\end{equation} and this yields, with the support condition of $\widehat{\phi_h}$, that
for $h\in\mathbb{N}_0$
\begin{equation*}
\phi_h\ast \Big(\sum_{k,u\in\mathbb{N}_0}{T_{[A_{k,u}^{\ll}]}}\fff\Big)=\sum_{\substack{u,k\in\mathbb{N}_0\\ k+u+3+\lfloor \log_2{n}\rfloor\geq h}}{\phi_h\ast T_{[A_{k,u}^{\ll}]}\fff }.
\end{equation*}
By assuming $A_{k,u}^{\ll}=0$ for $k<0$ and applying a change of variables, the last expression is
\begin{align*}
&\sum_{u=0}^{\infty}\sum_{k=h-u-3-\lfloor \log_2{n}\rfloor}^{\infty}{\phi_h\ast T_{[A_{k,u}^{\ll}]}\fff  }\\
&=\sum_{u,v\in\mathbb{N}_0}\phi_h\ast T_{[A_{v+h-u-3-\lfloor \log_2{n}\rfloor,u}^{\ll}]}\fff = \phi_h\ast\Big(\sum_{u,v\in\mathbb{N}_0} T_{[A_{v+h-u-3-\lfloor \log_2{n}\rfloor,u}^{\ll}]}\fff\Big).
\end{align*}
That is, for $h\in \mathbb{N}_0$
\begin{equation}\label{transform}
\phi_h\ast \Big(\sum_{k,u\in\mathbb{N}_0}{T_{[A_{k,u}^{\ll}]}}\fff\Big)=\phi_h\ast\Big(\sum_{u,v\in\mathbb{N}_0} T_{[A_{v+h-u-3-\lfloor \log_2{n}\rfloor,u}^{\ll}]}\fff\Big).
\end{equation}

Moreover, a proper use of Calder\'on's reproducing formula proves that
\begin{equation}\label{localhardy1}
\big\Vert \sup_{k\in\mathbb{N}_0}{|(f_j)^k|}\big\Vert_{L^p}\lesssim \Vert f_j\Vert_{h^p}, \quad 0<p\leq \infty, 
\end{equation}
\begin{equation}\label{fpsq}
\big\Vert \big\{ 2^{sk}(f_j)_k\big\}_{k\in\mathbb{N}_0}\big\Vert_{L^p(l^q)} \lesssim \Vert f_j\Vert_{F_p^{s,q}}, \quad p<\infty ~\text{ or }~ p=q=\infty,
\end{equation}
\begin{equation}\label{fisq}
\sup_{P\in\mathcal{D}, l(P)<1}\Big(\frac{1}{|P|}\int_{P}{\sum_{k=-\log_2{l(P)}}^{\infty}{2^{skq}\big| (f_j)_k(x)\big|^q}}dx \Big)^{1/q} \lesssim \Vert f_j\Vert_{F_{\infty}^{s,q}}, \quad 0<q<\infty.
\end{equation}

\subsubsection{The case $0<p<\infty$ or $p=q=\infty$}
From (\ref{transform}) one has
\begin{equation*}
\Big\Vert \Big\{2^{sh} \phi_h\ast \Big(\sum_{k,u\in\mathbb{N}_0}{T_{[A_{k,u}^{\ll}]}\fff}\Big)\Big\}_{h\in\mathbb{N}_0}\Big\Vert_{L^p(l^q)}^{\min{(1,p,q)}}\leq \sum_{u,v\in\mathbb{N}_0}{\Big\Vert \Big\{ 2^{sh}\phi_h\ast T_{[A_{v+h-u-3-\lfloor \log_2{n}\rfloor,u}^{\ll}]}\fff \Big\}_{h\in\mathbb{N}_0}  \Big\Vert_{L^p(l^q)}^{\min{(1,p,q)}}}.
\end{equation*}
It follows from  (\ref{supportt}) that the Fourier transform of $T_{[A_{v+h-u-3-\lfloor \log_2{n}\rfloor,u}^{\ll}]}\fff$ is supported on $ \big\{|\xi|\leq 2^{v+h}\big\}$.
We choose $t>0$ such that $s>t-d/2>\tau_{p,q}$ and apply Lemma \ref{propo} (1)  to obtain
\begin{align*}
&\Big\Vert \Big\{ 2^{sh}\phi_h\ast T_{[A_{v+h-u-3-\lfloor \log_2{n}\rfloor,u}^{\ll}]}\fff \Big\}_{h\in\mathbb{N}_0}  \Big\Vert_{L^p(l^q)}\nonumber\\
 &\lesssim  \sup_{l\in\mathbb{N}}{\big\Vert \widehat{\phi_l}(2^{v+l})\big\Vert_{L_t^2}} \Big\Vert \Big\{ 2^{sh}T_{[A_{v+h-u-3-\lfloor \log_2{n}\rfloor,u}^{\ll}]}\fff\Big\}_{h\in\mathbb{N}_0}\Big\Vert_{L^p(l^q)}\nonumber\\
&\approx 2^{v(t-d/2)}\Big\Vert \Big\{ 2^{sh}T_{[A_{v+h-u-3-\lfloor \log_2{n}\rfloor,u}^{\ll}]}\fff\Big\}_{h\in\mathbb{N}_0}\Big\Vert_{L^p(l^q)}\nonumber\\
&\lesssim 2^{-v(s-t+d/2)}2^{su}\Big\Vert \Big\{ 2^{sk}T_{[A_{k,u}^{\ll}]}\fff\Big\}_{k\in\mathbb{N}_0}\Big\Vert_{L^{p}(l^q)}\nonumber\\
&\lesssim \Big(\prod_{j=1}^{n}{\frac{1}{(1+|l_j|)^{\mathcal{J}}}} \Big)\mathcal{A}_N^{(m)}2^{-v(s-t+d/2)}2^{-u(N-s)}\\
&\relphantom{=}\times \Big\Vert \Big\{ 2^{k(s+m)}\mathfrak{M}_{\sigma,2^k}{(f_1)_k}\prod_{j=2}^{n}{\mathfrak{M}_{\sigma,2^k}{(f_j)^k}}\Big\}_{k\in\mathbb{N}_0}\Big\Vert_{L^p(l^q)}\nonumber
\end{align*} where we applied a change of variables and (\ref{taest}) in the last two inequalities. 
Since $s-t+d/2>0$ and  $N-s>0$, the left hand side of (\ref{maindeduce}) is majored by a constant multiple of
\begin{equation*}
 \Big(\prod_{j=1}^{n}{\frac{1}{(1+|l_j|)^{\mathcal{J}}}} \Big)\mathcal{A}_N^{(m)} \Big\Vert \Big\{2^{k(s+m)}  \mathfrak{M}_{\sigma,2^k}(f_1)_k\prod_{j=2}^{n}{\mathfrak{M}_{\sigma,2^k}(f_j)^k} \Big\}_{k\in\mathbb{N}_0}\Big\Vert_{L^p(l^q)}.
\end{equation*} 
Moreover, using (\ref{infmax2}),
\begin{align*}
&\Big\Vert \Big\{2^{k(s+m)}  \mathfrak{M}_{\sigma,2^k}(f_1)_k\prod_{j=2}^{n}{\mathfrak{M}_{\sigma,2^k}(f_j)^k} \Big\}_{k\in\mathbb{N}_0}\Big\Vert_{L^p(l^q)}\nonumber\\
&\lesssim \Big\Vert \Big\{ \sum_{Q\in\mathcal{D}_k}2^{k(s+m)}  \Big(\inf_{y\in Q}{\mathfrak{M}_{\sigma,2^k}(f_1)_k(y)}\Big)\Big[\prod_{j=2}^{n}{\Big(\inf_{y\in Q}{\mathfrak{M}_{\sigma,2^k}(f_j)^k}(y)\Big)}\Big]\chi_Q \Big\}_{k\in\mathbb{N}_0}\Big\Vert_{L^p(l^q)}.
\end{align*}
Now let $S_Q:=S_Q^{\gamma,q}(\{\mathfrak{M}_{\sigma,2^k}(f_1)_k\}_{k\in\mathbb{N}_0})$ and apply (\ref{inversechi}) and (\ref{hlmax}) for $0<r<\min{(1,p)}$ to show that the last expression is 
\begin{align*}
&\lesssim \Big\Vert \Big\{ \sum_{Q\in\mathcal{D}_k}2^{k(s+m)}  \big( \inf_{y\in Q}{\mathfrak{M}_{\sigma,2^k}(f_1)_k(y)}\Big) \Big[\prod_{j=2}^{n}{\Big( \inf_{y\in Q}{\mathfrak{M}_{\sigma,2^k}(f_j)^k}(y)\Big)}\Big]\chi_{S_Q} \Big\}_{k\in\mathbb{N}_0}\Big\Vert_{L^p(l^q)}\\
&\leq \Big\Vert \Big\{\sum_{Q\in\mathcal{D}_k}{2^{(s+m)k}\inf_{y\in Q}{\mathfrak{M}_{\sigma,2^k}(f_1)_k(y)}\chi_{S_Q}} \Big\}_{k\in\mathbb{N}}\Big\Vert_{L^{p_1}(l^q)}\prod_{j=2}^{n}\big\Vert \big\{ \mathfrak{M}_{\sigma,2^k}(f_j)^k\big\}_{k\in\mathbb{N}_0}\big\Vert_{L^{p_j}(l^{\infty})}\\
&\lesssim \Vert f_1\Vert_{F_{p_1}^{s+m,q}}\prod_{j=2}^{n}{\Vert f_j\Vert_{h^{p_j}}}
\end{align*}
where H\"older's inequality, Corollary \ref{cor1},  Corollary \ref{cor2} (with $\mu=0$), and (\ref{localhardy1})-(\ref{fisq}) are applied.

Combining all together the proof of (\ref{maindeduce}) ends for $0<p<\infty$ or $p=q=\infty$.

\subsubsection{The case $p=\infty$ and $0<q<\infty$}
Suppose $\sigma>d/q$.
First of all, by using (\ref{maindeduce}) for the case $p=q=\infty$ and the embedding $F_{\infty}^{s+m,q}\hookrightarrow F_{\infty}^{s+m,\infty}$ one has
\begin{align*}
 \Big\Vert \phi_0\ast \Big( \sum_{k,u\in\mathbb{N}_0}{T_{[A_{k,u}^{\ll}]}\fff}\Big)\Big\Vert_{L^{\infty}}&\leq \Big\Vert \sum_{k,u \in\mathbb{N}_0}{T_{[A_{k,u}^{\ll}]}\fff}\Big\Vert_{F_{\infty}^{s,\infty}}\\
 &\lesssim \Big(\prod_{j=1}^{n}{\frac{1}{(1+|l_j|)^{\mathcal{J}}}} \Big)\mathcal{A}_N^{(m)}\Vert f_1\Vert_{F_{\infty}^{s+m,q}}\prod_{j=2}^{n}{\Vert f_j\Vert_{L^{\infty}}}.
\end{align*}

Now we fix a dyadic cube $P\in\mathcal{D}$ with $l(P)<1$.
Then it follows from (\ref{transform}) that
\begin{align}\label{suptermest}
&\Big(\frac{1}{|P|}\int_P{\sum_{h=-\log_2{l(P)}}^{\infty}{2^{shq}\Big|\phi_h\ast \Big( \sum_{k,u\in\mathbb{N}_0}{T_{[A_{k,u}^{\ll}]}\fff(x)}\Big) \Big|^q}}dx \Big)^{1/q}\nonumber\\
&\leq \Big[ \sum_{u,v\in\mathbb{N}_0}  {\Big(\frac{1}{|P|}\int_P{\sum_{h=-\log_2{l(P)}}^{\infty}{2^{shq} \Big|\phi_h\ast T_{[A_{v+h-u-3-\lfloor \log_2{n}\rfloor,u}^{\ll}]}\fff(x)\Big|^q    }}dx \Big)^{\min{(1,q)}/q}}\Big]^{1/\min{(1,q)}}.    
\end{align}
We choose $t>0$ such that $s>t-d/2>\tau_q$ and apply Lemma \ref{propo} (2) with $\mu=1$. Then
\begin{align*}
&{\Big(\frac{1}{|P|}\int_P{\sum_{h=-\log_2{l(P)}}^{\infty}{2^{shq} \Big|\phi_h\ast T_{[A_{v+h-u-3-\lfloor \log_2{n}\rfloor,u}^{\ll}]}\fff(x)\Big|^q    }}dx \Big)^{1/q}}\\
&\lesssim \sup_{l\in\mathbb{N}_0}{\big\Vert \widehat{\phi_l}(2^{v+l}\cdot )\big\Vert_{L_t^2}}\sup_{R\in\mathcal{D}, l(R)<1}{\Big( \frac{1}{|R|}\int_P{\sum_{h=-\log_2{l(R)}}^{\infty}{2^{shq}\big| T_{[A_{v+h-u-3-\lfloor \log_2{n}\rfloor,u}^{\ll}]}\fff(x)\big|^q     }}dx\Big)^{1/q}}\\
&\lesssim 2^{-v(s-t+d/2)}2^{su}\sup_{R\in\mathcal{D},l(R)<1}{\Big(\frac{1}{|R|}\int_R{\sum_{k=v-u-3-\lfloor \log_2{n}\rfloor-\log_2{l(R)}}^{\infty}{2^{skq}\big|T_{[A_{k,u}^{\ll}]}\fff(x) \big|^q}}dx \Big)^{1/q}}.
\end{align*}
We deal with only the case $v-u-3-\lfloor \log_2{n}\rfloor\leq -1$ since the other case follows in a similar and simpler way.
The supremum in the last expression is less than a constant times the sum of
\begin{equation}\label{fpart}
\sup_{R\in\mathcal{D},l(R)<1}{\Big(\frac{1}{|R|}\int_R \sum_{k=-\log_2{l(R)}}^{\infty}{2^{skq}\big|  T_{[A_{k,u}^{\ll}]}\fff(x)  \big|^q} dx\Big)^{1/q}}
\end{equation}
\begin{equation}\label{spart}
\sup_{R\in\mathcal{D},l(R)<1}{\Big(\frac{1}{|R|}\int_R \sum_{k=v-u-3-\lfloor \log_2{n}\rfloor-\log_2{l(R)}}^{-\log_2{l(R)}-1}{2^{skq}\big|  T_{[A_{k,u}^{\ll}]}\fff(x)  \big|^q} dx\Big)^{1/q} }.
\end{equation}

We see that
\begin{equation*}
(\ref{spart})\lesssim (u+1)  \Big\Vert \Big\{ 2^{sk}T_{[A_{k,u}^{\ll}]}\fff\Big\}_{k\in\mathbb{N}_0}\Big\Vert_{L^{\infty}(l^{\infty})} 
\end{equation*}
and by using (\ref{taest}), (\ref{localhardy1}), (\ref{fpsq}), and the embedding $F_{\infty}^{s+m,q}\hookrightarrow F_{\infty}^{s+m,\infty}$,
\begin{align*}
&\Big\Vert \Big\{ 2^{sk}T_{[A_{k,u}^{\ll}]}\fff\Big\}_{k\in\mathbb{N}_0}\Big\Vert_{L^{\infty}(l^{\infty})} \\
&\lesssim \Big(\prod_{j=1}^{n}\frac{1}{\big(1+|l_j|\big)^{\mathcal{J}}} \Big)\mathcal{A}_N^{(m)}2^{-uN}\big\Vert \big\{2^{(s+m)k}\mathfrak{M}_{\sigma,2^k}(f_1)_k \big\}_{k\in\mathbb{N}_0}\big\Vert_{L^{\infty}(l^{\infty})}\Big( \prod_{j=2}^{n}{\big\Vert f_j }\big\Vert_{L^{\infty}}\Big)\\
&\lesssim \Big(\prod_{j=1}^{n}\frac{1}{\big(1+|l_j|\big)^{\mathcal{J}}} \Big)\mathcal{A}_N^{(m)}2^{-uN}\Vert f_1\Vert_{F_{\infty}^{s+m,q}}\Big( \prod_{j=2}^{n}{\Vert f_j\Vert_{L^{\infty}}}\Big).
\end{align*} 
 This proves that the term corresponding to (\ref{spart}) in (\ref{suptermest}) is dominated by a constant times
\begin{equation*}
\Big(\prod_{j=1}^{n}\frac{1}{\big(1+|l_j|\big)^{\mathcal{J}}} \Big)\mathcal{A}_N^{(m)}\Vert f_1\Vert_{F_{\infty}^{s+m,q}}\Big( \prod_{j=2}^{n}{\Vert f_j\Vert_{L^{\infty}}}\Big)
\end{equation*} 
because 
\begin{equation*}
\Big( \sum_{u,v\in\mathbb{N}_0}{2^{-v(s-t+d/2)\min{(1,q)}}2^{-u(N-s)(\min{(1,q)})}(u+1)^{\min{(1,q)}}}\Big)^{1/\min{(1,q)}}\lesssim 1.
\end{equation*}

Similarly, (\ref{taest}) yields that for $N>s$
\begin{align*}
(\ref{fpart})&\lesssim \Big(\prod_{j=1}^{n}\frac{1}{\big(1+|l_j|\big)^{\mathcal{J}}} \Big)\mathcal{A}_N^{(m)}2^{-uN}\Big( \prod_{j=2}^{n}{\big\Vert f_j\big\Vert_{L^{\infty}}}\Big)\\
&\relphantom{=}\times\sup_{P\in\mathcal{D},l(P)<1}{\Big(\frac{1}{|P|}\int_P{\sum_{k=-\log_2{l(P)}}^{\infty}{2^{k(s+m)q}\big(\mathfrak{M}_{\sigma,2^k}(f_1)_k(x) \big)^q}}dx \Big)^{1/q}}\\
&\lesssim \Big(\prod_{j=1}^{n}\frac{1}{\big(1+|l_j|\big)^{\mathcal{J}}} \Big)\mathcal{A}_N^{(m)}2^{-uN}\Vert f_1\Vert_{F_{\infty}^{s+m,q}}\Big( \prod_{j=2}^{n}{\Vert f_j\Vert_{L^{\infty}}}\Big)
\end{align*} where Lemma \ref{maximal3} (2) and (\ref{fisq}) are applied in the last inequality.
This implies that the term corresponding to (\ref{fpart}) in (\ref{suptermest}) is also bounded by a constant times
\begin{equation*}
\Big(\prod_{j=1}^{n}\frac{1}{\big(1+|l_j|\big)^{\mathcal{J}}} \Big)\mathcal{A}_N^{(m)}\Vert f_1\Vert_{F_{\infty}^{s+m,q}}\Big( \prod_{j=2}^{n}{\Vert f_j\Vert_{L^{\infty}}}\Big).
\end{equation*}

This completes the proof of (\ref{maindeduce}) for $p=\infty$ and $0<q<\infty$.


\appendix
\section{The proof of (\ref{bmoh1})} \label{appendixa}
Suppose $0<t<1$ and $\sigma>d/t>d$.
For $Q\in \mathcal{D}$ let $S_Q$ be the subset of $Q$ for $BMO$ norm equivalence of $f$ in Corollary \ref{maincor}. For $k\in\mathbb{Z}$ let $\widetilde{\phi_k}:= \phi_{k-1}+\phi_k+\phi_{k+1}$ so that $\widehat{\widetilde{\phi_k}}(\xi)\widehat{\phi_k}(\xi)=\widehat{\phi_k}(\xi)$. Then by applying (\ref{infmax2}), (\ref{inversechi}), Lemma \ref{maximal2} (1), Corollary \ref{maincor}, and Lemma \ref{character2} (1), one obtains
\begin{align*}
\big|\langle f,g\rangle\big|& \leq \int_{\mathbb{R}^d}{\sum_{k\in\mathbb{Z}}{\big|\phi_k\ast f(x)\big|\big|\widetilde{\phi_k}\ast g(x)\big|}}dx=\int_{\mathbb{R}^d}{\sum_{k\in\mathbb{Z}}{\sum_{Q\in\mathcal{D}_k}\big|\phi_k\ast f(x)\big|\big|\widetilde{\phi_k}\ast g(x)\big|\chi_Q(x)}}dx\\
 &\lesssim \int_{\mathbb{R}^d}{\sum_{k\in\mathbb{Z}}{\sum_{Q\in\mathcal{D}_k}\inf_{y\in Q}\big(\mathfrak{M}_{\sigma,2^k}^t(\phi_k\ast f)(y)\big)\inf_{y\in Q}\big(\mathfrak{M}_{\sigma,2^k}^t(\widetilde{\phi_k}\ast g)(y)\big)\chi_Q(x)}}dx\\
 &\lesssim  \int_{\mathbb{R}^d}{\sum_{k\in\mathbb{Z}}{\sum_{Q\in\mathcal{D}_k}\inf_{y\in Q}\big(\mathfrak{M}_{\sigma,2^k}^t(\phi_k\ast f)(y)\big)\inf_{y\in Q}\big(\mathfrak{M}_{\sigma,2^k}^t(\widetilde{\phi_k}\ast g)(y)\big)\chi_{S_Q}(x)}}dx\\
 &\leq  \Big\Vert \Big\{ \sum_{Q\in\mathcal{D}_k}{\inf_{y\in Q}{\big(\mathfrak{M}_{\sigma,2^k}^t\big(\phi_k\ast f\big)(y) \big)}\chi_{S_Q}}\Big\}_{k\in\mathbb{Z}}\Big\Vert_{L^{\infty}(l^2)}\\
 &\relphantom{=}\times  \Big\Vert \Big\{ \sum_{Q\in\mathcal{D}_k}{\inf_{y\in Q}{\big(\mathfrak{M}_{\sigma,2^k}^t\big(\widetilde{\phi_k}\ast g\big)(y) \big)}\chi_{Q}}\Big\}_{k\in\mathbb{Z}}\Big\Vert_{L^{1}(l^2)}\\
 &\approx \Vert f\Vert_{BMO}\Vert g\Vert_{H^1}
\end{align*}
where we used the fact that $\Vert g\Vert_{\dot{F}_1^{0,2}}\approx \Vert g\Vert_{H^1}$.

\section{Proof of Lemma \ref{comparelemma}, \ref{mcomposition}, and \ref{comparelemma2}}\label{appendixb}

\subsection{Proof of Lemma \ref{comparelemma}}
Since the case $t=s$ is trivial, we only consider the case $t<s$.
Let  $\Psi_0\in S$ satisfy 
\begin{equation*}
Supp(\widehat{\Psi_0})\subset \big\{\xi:|\xi| \leq 2^2A \big\} \qquad \text{ and }\qquad \widehat{\Psi_0}(\xi)=1\quad\text{for }~ |\xi|\leq 2A.
\end{equation*}
Then we note that $f=\Psi_k\ast f$.

First, assume $s=\infty$ and $0<t<\infty$.
If $1<t<\infty$, then it follows from H\"older's inequality that
\begin{align*}
\frac{|f(x-y)|}{(1+2^k|y|)^{\sigma}}&\leq \int_{\mathbb{R}^d}{|f(x-z)|\frac{|\Psi_k(z-y)|}{(1+2^k|y|)^{\sigma}}}dz\\
 &\leq \int_{\mathbb{R}^d}{\frac{|f(x-z)|}{(1+2^k|z|)^{\sigma}}|\Psi_k(z-y)|(1+2^k|z-y|)^{\sigma}}dz\\
 &\leq \mathfrak{M}_{\sigma,2^k}^tf(x) 2^{-kd/t}\Big(\int_{\mathbb{R}^d}{\big(|\Psi_k(z)|(1+2^k|z|)^{\sigma} \big)^{t'}}dz \Big)^{1/t'}\\
 &\lesssim  \mathfrak{M}_{\sigma,2^k}^tf(x).
\end{align*}
If $0<t\leq 1$ then we apply Nikolskii's inequality  to obtain
\begin{equation*}
|f(x-y)|\lesssim 2^{kd(1/t-1)}\Big(\int_{\mathbb{R}^d}{|f(x-z)|^t|\Psi_k(z-y)|^t}dz \Big)^{1/t}
\end{equation*} and thus
\begin{align*}
\frac{|f(x-y)|}{(1+2^k|y|)^{\sigma}}&\lesssim 2^{kd(1/t-1)}\Big( \int_{\mathbb{R}^d}{\frac{|f(x-z)|^t}{(1+2^k|z|)^{\sigma t}}|\Psi_k(z-y)|^t(1+2^k|z-y|)^{\sigma t}}dz\Big)^{1/t}\\
 &\lesssim \mathfrak{M}_{\sigma,2^k}^tf(x). 
\end{align*}
This proves 
\begin{equation}\label{inftycase}
\mathfrak{M}_{\sigma,2^k}f(x)\lesssim \mathfrak{M}_{\sigma,2^k}^{t}f(x).
\end{equation}

Now assume $0<t<s<\infty$.
Then one has
\begin{align*}
\mathfrak{M}_{\sigma,2^k}^sf(x)&=2^{kd/s}\Big(\int_{\mathbb{R}^d}{\Big(\frac{|f(x-y)|}{(1+2^k|y|)^{\sigma}} \Big)^s}dy \Big)^{1/s}\\
 &\leq \big( \mathfrak{M}_{\sigma,2^k}f(x)\big)^{1-t/s}2^{kd/s}\Big(\int_{\mathbb{R}^d}{\Big(\frac{|f(x-y)|}{(1+2^k|y|)^{\sigma}} \Big)^t}dy \Big)^{1/s}\\
 &\lesssim \big( \mathfrak{M}_{\sigma,2^k}^{t}f(x)\big)^{1-t/s}\big( \mathfrak{M}_{\sigma,2^k}^{t}f(x)\big)^{t/s}=\mathfrak{M}_{\sigma,2^k}^{t}f(x).
\end{align*} by applying (\ref{inftycase}).

\subsection{Proof of Lemma \ref{mcomposition}}

We only care about the case $0<t<s<\infty$ as the other cases can be done similarly.
By applying Minkowski's inequality with $s/t>1$, one has
\begin{align*}
\mathfrak{M}_{\sigma,2^k}^{s}\mathfrak{M}_{\sigma,2^k}^tf(x)&=2^{kd/s}2^{kd/t} \Big( \int_{\mathbb{R}^d}{\Big( \int_{\mathbb{R}^d}{\frac{|f(x-z)|^t}{(1+2^k|y-z|)^{\sigma t}(1+2^k|y|)^{\sigma t}}}dz\Big)^{s/t}}dy\Big)^{1/s}\\
&\leq 2^{kd/s} 2^{kd/t} \Big(\int_{\mathbb{R}^d}{|f(x-z)|^t\Big( \int_{\mathbb{R}^d}{\frac{1}{(1+2^k|y-z|)^{\sigma s}(1+2^k|y|)^{\sigma s}}}dy\Big)^{t/s}}dz \Big)^{1/t}
\end{align*}
and a standard computation (see \cite[Appendix B]{Gr}) yields that
\begin{equation*}
 \int_{\mathbb{R}^d}{\frac{1}{(1+2^k|y-z|)^{\sigma s}(1+2^k|y|)^{\sigma s}}}dy\lesssim \frac{2^{-kd}}{(1+2^k|z|)^{\sigma s}}.
\end{equation*}
Therefore,
\begin{equation*}
\mathfrak{M}_{\sigma,2^k}^{s}\mathfrak{M}_{\sigma,2^k}^tf(x)\lesssim \mathfrak{M}_{\sigma,2^k}^tf(x). \qedhere
\end{equation*}

\subsection{Proof of Lemma \ref{comparelemma2}}

Suppose $0<\epsilon<\sigma-d/t$.
Let 
\begin{equation*}
E_0 :=\big\{y\in\mathbb{R}^d:|y|\leq 2^{-k} \big\} ~\text{ and }~ E_j :=\big\{y\in\mathbb{R}^d:2^{-k+j-1}<|y|\leq 2^{-k+j} \big\}, ~j\geq 1.
\end{equation*}
Then one has
\begin{equation*}
\int_{\mathbb{R}^d}{\frac{|f(x-y)|^t}{(1+2^k|y|)^{\sigma t}}}dy\lesssim \sum_{j=0}^{\infty}{2^{-j\sigma t}\int_{E_j}{|f(x-y)|^t}dy}\leq 2^{-kd}\big(\mathcal{M}_t^{k,\epsilon}f(x) \big)^t\sum_{j=0}^{\infty}{2^{-jt(\sigma-d/t-\epsilon )}},
\end{equation*} which concludes the proof since $\sigma-d/t-\epsilon >0$.

\end{document}